\theoremstyle{plain}
\newtheorem{theorem}{Theorem}[section]
\newtheorem{lemma}[theorem]{Lemma}
\newtheorem{cor}[theorem]{Corollary}
\theoremstyle{definition}
\newtheorem{remark}[theorem]{Remark}
\numberwithin{equation}{section}
\def\be{\begin{equation}}
\def\ee{\end{equation}}
\begin{document}

\title[Convergence of Boundary Expansions]
{The Convergence of Boundary Expansions\\ and the Analyticity of Minimal Surfaces\\ in the Hyperbolic Space}
\author[Qing Han]{Qing Han}
\address{Department of Mathematics\\
University of Notre Dame\\
Notre Dame, IN 46556} \email{qhan@nd.edu}
\address{Beijing International Center for Mathematical Research\\
Peking University\\
Beijing, 100871, China} \email{qhan@math.pku.edu.cn}

\author[Xumin Jiang]{Xumin Jiang}
\address{Department of Mathematics\\
Rutgers University\\
New Brunswick, NJ 08901}  \email{xj60@math.rutgers.edu}

\begin{abstract}
We study expansions near the boundary of solutions to the 
Dirichlet problem for minimal graphs in the hyperbolic space and prove the local
convergence of such expansions if the boundary is locally analytic. 
As a consequence, we prove a conjecture by F.-H. Lin that the minimal graph
is analytic up to the boundary if the boundary is analytic and the minimal graph is smooth
up to the boundary. 
\end{abstract}

\thanks{The first author acknowledges the support of NSF
Grant DMS-1404596. }
\maketitle

\section{Introduction}\label{sec-Intro}

Complete minimal hypersurfaces in the hyperbolic space
$\mathbb H^{n+1}$ demonstrate similar properties as
those in the Euclidean space $\mathbb R^{n+1}$ in the aspect of the interior regularity and 
different properties in the aspect of the boundary regularity.  
Anderson \cite{Anderson1982Invent}, \cite{Anderson1983} 
studied complete area-minimizing submanifolds 
and proved that, for any 
given closed embedded $(n-1)$-dimensional submanifold 
$N$ at the infinity of $\mathbb H^{n+1}$, 
there exists 
a complete 
area minimizing integral $n$-current which is asymptotic to
$N$ at infinity. 
In the case $n\le 6$, these currents are 
embedded smooth submanifolds; 
while in the case $n\ge 7$, as in the Euclidean case, there can be closed singular set 
of Hausdorff dimension at most $n-7$. 
Hardt and 
Lin \cite{Hardt&Lin1987}  discussed the $C^1$-boundary regularity of such hypersurfaces. 
Subsequently, 
Lin \cite{Lin1989Invent} studied the higher order boundary regularity. 
Recently, we \cite{HanJiang2014} studied the boundary 
expansions of the minimal graphs  in the hyperbolic space and established optimal asymptotic expansions
in the context of the finite regularity. In this paper, we will study the convergence of such 
expansions and the analyticity of minimal graphs up to the boundary. 

Assume $\Omega$ is a bounded domain in $\mathbb{R}^n$. 
Lin \cite{Lin1989Invent} studied the 
Dirichlet problem of the form
\be\label{Eqf}
\Delta f - \frac{f_i f_j}{1+|D f|^2}f_{ij}+\frac{n}{f} =0  \quad \text{in } \Omega, \ee
with the condition 
\begin{align}\label{EqfCondition}\begin{split}f &>0   \quad \text{in } \Omega, \\
f &=0 \quad \text{on } \partial \Omega.
\end{split}\end{align}
We note that the equation \eqref{Eqf}
becomes singular on $\partial\Omega$ since $f=0$ there. 
If $\Omega$ is a $C^2$-domain in $\mathbb R^n$ with 
a nonnegative boundary mean curvature  
$H_{\partial\Omega}\ge 0$ with respect to the inward normal of 
$\partial\Omega$, then \eqref{Eqf} and \eqref{EqfCondition}
admit a unique solution $f\in C(\bar\Omega)\cap C^\infty(\Omega)$. 
Moreover,  the graph of $f$ 
is a complete minimal hypersurface in the hyperbolic space $\mathbb H^{n+1}$
with the asymptotic boundary $\partial\Omega$. 
At each point 
of the boundary, the gradient of $f$ blows up and hence the graph of $f$ 
has a vertical tangent plane. Han, Shen and Wang \cite{Han2016} proved that 
$f\in C^{\frac{1}{n+1}}(\bar\Omega)$.

Geometrically, it is more interesting to discuss the 
regularity of the graph of $f$ instead of the regularity of $f$ itself. 
Lin \cite{Lin1989Invent} and Tonegawa
\cite{Tonegawa1996MathZ} proved the following result. 
{\it If $\partial\Omega$ is $C^{n,\alpha}$ for some $\alpha\in (0,1)$, 
then the graph of $f$ is $C^{n,\alpha}$ up to the boundary. 
If $\partial\Omega$ is smooth, then the graph of $f$ is smooth up to the boundary 
if the dimension $n$ is even or if the dimension $n$ is odd and 
the principal curvatures of $\partial\Omega$ satisfy a differential equation of order $n+1$.}
See also \cite{Lin2012Invent}. 

Lin \cite{Lin1989Invent} conjectured that the graph of $f$ is analytic up to the boundary 
if $\partial\Omega$ is analytic. In this paper, we prove this conjecture under the necessary extra assumption that 
the graph is smooth up to the boundary. The first result is given by 
the following theorem. 

\begin{theorem}\label{thrm-Analyticity-MinimalGraph} 
Let $\Omega$ be a bounded smooth domain in $\mathbb{R}^n$ with 
$H_{\partial\Omega}\ge 0$ and $f\in C(\bar \Omega)\cap C^\infty(\Omega)$ be a 
solution of \eqref{Eqf}-\eqref{EqfCondition}. 
Assume $\partial\Omega$ is analytic near $x_0\in\partial\Omega$. 
Then, 

$\operatorname{(1)}$ for $n$ even, the graph of $f$ is analytic up to $\partial\Omega$
near $x_0$; 

$\operatorname{(2)}$ for $n$ odd, the graph of $f$ is analytic up to $\partial\Omega$
near $x_0$ if it is smooth up to $\partial\Omega$ near $x_0$.
\end{theorem}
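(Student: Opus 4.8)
The plan is to pass to a representation of the graph of $f$ near $x_0$ in which the singular equation \eqref{Eqf} becomes a \emph{regular} degenerate-elliptic equation with analytic Dirichlet data, to prove that the associated boundary expansion converges locally, and to transport this back. Since the graph of $f$ meets the slice $\{x_{n+1}=0\}$ along $\partial\Omega$ with a vertical tangent plane (the $C^1$ boundary regularity of Hardt and Lin \cite{Hardt&Lin1987}), after a rotation of $\mathbb R^n$ fixing $x_{n+1}$ --- an isometry of $\mathbb H^{n+1}$ --- we may assume $T_{x_0}\partial\Omega=\{x_n=0\}$ and write the graph near $(x_0,0)$ as a graph $x_n=u(x',x_{n+1})$ over the $x_{n+1}$-direction, with $x'=(x_1,\dots,x_{n-1})$. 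Setting $t=x_{n+1}$ and $y=(x',t)$, the minimality of the graph in $\mathbb H^{n+1}$ translates, as in \cite{HanJiang2014}, into a quasilinear equation
\be\label{eq-reduced}
\Delta_{x'}u+u_{tt}-\frac{u_{y_i}u_{y_j}u_{y_iy_j}}{1+|D_yu|^2}-\frac{n}{t}\,u_t=0,
\ee
which is uniformly elliptic in $y$, has coefficients analytic in $(y,u,D_yu)$ except for the Fuchsian term $\tfrac{n}{t}u_t$, whose indicial roots along $\{t=0\}$ are $0$ and $n+1$, and carries the boundary value $u(x',0)=\psi(x')$, where $\psi$ is the analytic function whose graph is $\partial\Omega$ near $x_0$; the geometry of $\partial\Omega$ thus enters only through an analytic Dirichlet datum. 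Moreover $u_t(x',0)=0$, again from the vertical tangent plane.

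By \cite{HanJiang2014}, $u$ admits a boundary expansion $u(x',t)\sim\psi(x')+\sum_{k\ge 2}u_k(x')\,t^k$, with the $t^1$-term absent and with $u_k$ determined, through a recursion whose $k$-th coefficient is $k(k-1-n)$, by $u_0=\psi,\dots,u_{k-1}$ whenever $k\ne n+1$. The exceptional value $k=n+1$ is the second indicial root, where a resonance occurs: for $n$ even the expansion contains only even powers of $t$ below order $n+1$, so the forcing at order $t^{n-1}$ vanishes, no $t^{n+1}\log t$ term arises, $u_{n+1}$ is not pinned down by the local recursion (but is a well-defined smooth coefficient of the actual solution), and $u$ is automatically $C^\infty$ up to $\{t=0\}$ --- recovering the Lin--Tonegawa regularity \cite{Lin1989Invent},\cite{Tonegawa1996MathZ}; for $n$ odd a $t^{n+1}\log t$ term is forced unless the differential relation of order $n+1$ on the principal curvatures of $\partial\Omega$ from \cite{Lin1989Invent},\cite{Tonegawa1996MathZ} holds near $x_0$, and the hypothesis that the graph be smooth up to $\partial\Omega$ is exactly this relation. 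In either case we may therefore assume $u\in C^\infty$ up to $\{t=0\}$ near $0$. Away from $\{t=0\}$, \eqref{eq-reduced} is nondegenerate elliptic with analytic coefficients, so $u$ is real-analytic in $\{t>0\}$ by Morrey's interior analyticity theorem, and the theorem reduces to the \emph{convergence of the boundary expansion of $u$}: the real-analyticity of $u$ up to $\{t=0\}$ near $0$, with radius locally uniform in $x'$. (Note the contrast with $f$ itself, which is only $C^{1/(n+1)}(\bar\Omega)$ by \cite{Han2016}; it is the graph, not $f$, that is analytic.)

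To prove this I would establish analytic a priori estimates up to the boundary: on a half-ball $\overline{B^+_r}$ about $0$,
\[
\|\partial_y^\alpha u\|_{C^0(\overline{B^+_r})}\le C^{\,|\alpha|+1}\,|\alpha|!\qquad\text{for all multi-indices }\alpha,
\]
by induction on $|\alpha|$ in the manner of the Morrey--Friedman majorant method for elliptic analytic regularity --- differentiate \eqref{eq-reduced}, solve for the top-order derivative using the ellipticity (which gains two derivatives) and the Fa\`a di Bruno estimates afforded by the analyticity of the coefficients, and feed in the lower-order bounds, the finitely many low-order ones coming from the smoothness of $u$ and the explicit initial terms of the expansion in \cite{HanJiang2014}. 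The new ingredient is the control of the Fuchsian term: since $u_t(x',0)=0$, the function $t^{-1}u_t(x',t)=\int_0^1 u_{tt}(x',st)\,ds$ is smooth up to $\{t=0\}$, and for $j\ge 0$
\[
\partial_t^{\,j}\!\big(t^{-1}u_t\big)(x',t)=\int_0^1 s^{\,j}\,\partial_t^{\,j+2}u(x',st)\,ds,
\]
whence $\|\partial_t^{\,j}(t^{-1}u_t)\|_{C^0(\overline{B^+_r})}\le\tfrac1{j+1}\|\partial_t^{\,j+2}u\|_{C^0(\overline{B^+_r})}$, and more generally each derivative of $t^{-1}u_t$ is dominated by a derivative of $u$ of order two higher. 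Thus the Fuchsian term costs exactly two derivatives, offset by the elliptic gain, and in the delicate pure-$t$-derivative case it carries a factor $\tfrac{n}{k-1}<1$ for $k>n+1$ that is absorbed into the left-hand side, while in the mixed cases it merely reproduces, up to the constant $n$, a term of the shape $\partial_{x'}^\gamma u_{tt}$ already bookkept in the elliptic estimate. Summing the resulting majorant series gives convergence of $\sum_k u_k(x')\,t^k$ to $u$, uniformly for $x'$ near $0$ --- in particular $u$, hence the coefficient $u_{n+1}$, is analytic as a consequence rather than an input --- and then the graph $\{x_n=u(x',x_{n+1})\}$, and on undoing the rotation the graph of $f$, is analytic up to $\partial\Omega$ near $x_0$. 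The main obstacle is precisely this inductive estimate: it is where the Fuchsian singularity and the order-$(n+1)$ resonance must be reconciled with the classical majorant machinery, and it is what makes the smoothness hypothesis (for $n$ odd) genuinely necessary --- without it $u$ has a $t^{n+1}\log t$ term and no such estimate can hold.
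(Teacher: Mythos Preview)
Your reduction to the vertical-graph function $u$ satisfying \eqref{eq-reduced}, the identification of the indicial roots $0$ and $n+1$, and the role of the resonance are all correct and match the paper. The gap is in the inductive estimate itself.

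Your absorption argument for the Fuchsian term works only in the pure-$t$ case. The identity $\partial_t^{\,j}(t^{-1}u_t)=\int_0^1 s^{\,j}\partial_t^{\,j+2}u(\cdot,st)\,ds$ gives, after applying $\partial_{x'}^{\gamma}\partial_t^{\,j-2}$ to the equation, a Fuchsian contribution bounded by $\tfrac{n}{j-1}\|\partial_{x'}^{\gamma}\partial_t^{\,j}u\|$; the constant is $<1$ only when the number of $t$-derivatives satisfies $j>n+1$. For $j=2$ --- the base of any recursion that trades $t$-derivatives for $x'$-derivatives --- the constant is $n\ge 2$ and cannot be absorbed. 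Your phrase ``already bookkept in the elliptic estimate'' conceals precisely this circularity: moving the Fuchsian term to the right and applying Schauder for $\Delta$ would give $\|D^2D_{x'}^{\,l}u\|\le C\big(n\|\partial_t^2D_{x'}^{\,l}u\|+\cdots\big)$ with $Cn>1$. And for $j=0,1$ you have no mechanism at all: the Morrey--Friedman scheme relies on uniform boundary Schauder estimates, which are unavailable for $\Delta-\tfrac{n}{t}\partial_t$ up to $\{t=0\}$. Smoothness of $u$ supplies bounds on finitely many derivatives, not the uniform factorial growth the induction needs.

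The paper's route is substantially different and separates the two directions. Tangential analyticity (estimates $|D_{x'}^{\,l}u|\le B_0B^{l-1}(l-1)!\,t^2(R-|x'|)^{1-l}$, together with one and two extra full derivatives) is obtained not from elliptic regularity but from the \emph{maximum principle}, applied to $L(D_{x'}^{\,l}u)$ in shrinking cones $T_l=\{t<(R-|x'|)/l\}$ with barrier $M(\varepsilon|x'-x_0'|^2+t^2)$; the sign condition $2A_{nn}+2P+Q\le -c_0$ coming from the Fuchsian structure is what makes this work (Theorem~\ref{thrm-TangentialAnalyticity}). With tangential analyticity in hand, the paper next proves --- via explicit ODE integral formulas --- that every expansion coefficient, in particular the nonlocal $c_{n+1}$, is analytic in $x'$ (Theorem~\ref{lemma-LinearAnalyticity-FirstNonlocal}); contrary to your plan this is an \emph{input}, not a consequence. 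Only then is normal analyticity established, and not by derivative bounds: the paper runs a Nirenberg--Kichenassamy abstract Cauchy--Kowalevski iteration, building $v_k=v_{\overline m}+\sum w_i$ from the analytic initial segment $v_{\overline m}$ and proving geometric decay of the increments in a scale of norms $M_k$, whence $v_k\to v$ analytic; uniqueness in the same norm then forces $u=v$ (Theorem~\ref{Thm-MainThm}). No factorial bound on $\partial_t^{\,k}u$ is ever attempted directly.
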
 

Locally near each boundary point, the graph of $f$ can be represented by a function 
over its vertical tangent plane. Specifically, 
we fix a boundary point of $\Omega$, say the origin, and assume that 
the vector $e_n=(0,\cdots, 0,1)$ is the interior normal vector to $\partial\Omega$ 
at the origin. Then, with $x=(x',x_n)$, the $x'$-hyperplane is the tangent plane of 
$\partial\Omega$ at the origin, and the boundary $\partial\Omega$ can be expressed 
in a neighborhood of the origin as a graph of a smooth function over $\mathbb R^{n-1}\times\{0\}$, 
say 
$$x_n=\varphi(x').$$
We now denote points in $\mathbb R^{n+1}=
\mathbb R^n\times\mathbb R$ by $(x',x_n,y_n)$. The vertical hyperplane 
given by $x_n=0$ is the tangent plane to the graph of $f$ at the origin in $\mathbb R^{n+1}$,
and we can represent the graph of $f$ as a graph of a new function $u$ defined in terms of 
$(x', 0, y_n)$ for small $x'$ and $y_n$, with $y_n>0$. In other words, we treat 
$\mathbb R^n=\mathbb R^{n-1}\times\{0\}\times\mathbb R$ as our new base space and write 
$u=u(y)=u(y', y_n)$, with $y'=x'$. Then, for some $R>0$,
$u$ satisfies 
\begin{align}\label{eq-Intro-Equ}
\Delta u - \frac{u_i u_j}{1+|D u|^2}u_{ij}-\frac{n u_{n}}{y_n}=0  \quad \text{in } B_R^+,
\end{align}
and 
\begin{align}\label{eq-Intro-EquCondition}
u=\varphi\quad\text{on }B_R'.
\end{align}
Lin \cite{Lin1989Invent} and Tonegawa
\cite{Tonegawa1996MathZ} proved their regularity result for the graph of $f$ 
by proving the corresponding 
regularity of $u$. Concerning the analyticity of $u$ up to the boundary, we have 
the following result. 

\begin{theorem}\label{thrm-Analyticity-VerticalGraph} 
Let $\varphi$ be an analytic function in $B_R'$ 
and $u\in C(\bar B^+_R)\cap C^\infty(B^+_R)$ be 
a solution of \eqref{eq-Intro-Equ}-\eqref{eq-Intro-EquCondition}. Then, for any $r\in (0, R)$, 

$\operatorname{(1)}$ for $n$ even, $u$ is analytic in $\bar B_{r}^+$; 

$\operatorname{(2)}$ for $n$ odd, $u$ is analytic in $y', y_n$ and $y_n\log y_n$ for 
$(y',y_n)\in \bar B_{r}^+$. In addition, if $u$ is smooth in $\bar B_{R}^+$, then 
$u$ is analytic in $\bar B_{r}^+$.\end{theorem}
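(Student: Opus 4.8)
\medskip

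The plan is to show that the formal asymptotic expansion of $u$ at $\{y_n=0\}$ converges near the origin and represents $u$ there; the analyticity statements then follow. We begin from the finite-order expansion of \cite{HanJiang2014}. After multiplying \eqref{eq-Intro-Equ} by $y_n\bigl(1+|Du|^2\bigr)$ and matching powers of $y_n$ in the ansatz $u=\sum_k c_k(y')\,y_n^{k}$ (augmented, when $n$ is odd, by terms $c_{k,\ell}(y')\,y_n^{k}(\log y_n)^{\ell}$ with $\ell\ge1$ and $k\ge n+1$), one obtains for every $k\ne n+1$ a recursion of the shape
\[
k(k-1-n)\,c_k \;=\; F_k\bigl(c_0,\dots,c_{k-1};\,\partial_{y'}c_0,\dots,\partial^2_{y'}c_{k-2}\bigr),
\]
where $F_k$ is a fixed polynomial coming from $\Delta_{y'}$, the quasilinear term $u_iu_ju_{ij}$, and the factor $1+|Du|^2$; one checks $c_0=\varphi$ and $c_1=0$. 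The exponent $k=n+1$ is the positive indicial root of the degenerate operator $\partial_{nn}-\tfrac{n}{y_n}\partial_n$: there $k(k-1-n)$ vanishes and matching produces instead a compatibility condition on $c_0,\dots,c_n$. For $n$ even this condition holds automatically --- it is read off from the parity of $F_{n+1}$, consistently with the smoothness assertion of Lin and Tonegawa --- so no logarithmic terms arise and $c_{n+1}$ is a free parameter still to be determined; for $n$ odd it may fail, forcing a logarithmic term of order $n+1$ with coefficient prescribed by $F_{n+1}$. Since $F_k$ involves only algebraic operations and $y'$-differentiation, an induction shows that every coefficient other than the free $c_{n+1}$ is analytic in $y'$, provided $c_{n+1}$ itself is.

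The first substantial step is \emph{tangential analyticity up to the boundary}: for a fixed $r\in(0,R)$ and all $\beta,j$,
\[
\bigl\|D^{\beta}_{y'}\partial^{j}_{y_n} u\bigr\|_{L^{\infty}(\bar B_r^+)}\;\le\; C_j\,M^{|\beta|}\,\beta!,
\]
with $M$ independent of $\beta$ and $j$. Here the structure of the degeneracy is essential: the singular term $\tfrac{n}{y_n}\partial_n$ involves only the normal direction, so differentiating \eqref{eq-Intro-Equ} in $y'$ does not worsen it, and a Morrey--Nirenberg-type a priori estimate --- adapted to the degenerate equation and carried out in weighted H\"older or conormal norms that see the degeneracy, seeded by the analyticity of $\varphi$ --- propagates analytic bounds in the tangential variables up to $\{y_n=0\}$. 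In particular the free coefficient $c_{n+1}(y')$ and, when $n$ is odd, all the logarithmic coefficients are analytic in $y'$; fed back into the recursion above, this makes every coefficient of the expansion analytic in $y'$.

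It remains to control the expansion in the normal direction, and this is the step I expect to be the main obstacle --- in practice it is intertwined with the previous one in a single induction on $|\beta|+j$. One must show that $\sum_k c_k(y')\,y_n^{k}$ --- and, for $n$ odd, the associated double series in $y_n$ and $\zeta:=y_n\log y_n$ --- converges on a fixed neighborhood of the origin. The favorable point is that the indicial factor $k(k-1-n)$ grows quadratically in $k$, so dividing by it gains a power of $k$ at each step; the difficulty is that $F_k$ costs up to two $y'$-derivatives of the lower coefficients, which in a naive scheme loses a comparable power of $k$, and that the resonance at $k=n+1$ (together with, for $n$ odd, the coupling between powers of $y_n$ and of $\zeta$) destroys strict triangularity. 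I would handle this by a majorant (Cauchy--Kovalevskaya-type) comparison: dominate $\varphi$ by an explicit analytic function such as $A\rho\,\bigl(\rho-(y_1+\cdots+y_{n-1})\bigr)^{-1}$, replace the coefficient functions of $Du$ in \eqref{eq-Intro-Equ} by their Cauchy majorants --- legitimate since $1+|Du|^2\ge1$, so $u_iu_j/(1+|Du|^2)$ is analytic near the relevant values --- and prove by induction that $\sum_k c_k y_n^{k}$ is majorized by the solution of the associated scalar majorant equation, the quadratic gain from the indicial factor being exactly what offsets the two derivatives in $F_k$; for $n$ odd the same comparison is carried out in $y_n$ and $\zeta$, using that $\partial_{y_n}$ shifts the $(y_n,\zeta)$-bidegree in a controlled way (the number of logarithmic factors growing only like $k/(n+1)$, which leaves room). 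Denote by $U$ the sum of the resulting (possibly log-augmented) series: it is analytic in $(y',y_n)$ when $n$ is even and in $(y',y_n,y_n\log y_n)$ when $n$ is odd, it equals $\varphi$ on $\{y_n=0\}$, and by construction it has the same full asymptotic expansion as $u$. Hence $v:=u-U$ solves the linearized degenerate-elliptic equation with right-hand side and Cauchy data flat at $\{y_n=0\}$; a weighted energy estimate (equivalently, unique continuation) for that operator, whose indicial roots are $0$ and $n+1$, forces $v\equiv0$, so $u=U$ near the origin.

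This establishes part $(1)$ and the first assertion of part $(2)$. For the last assertion of part $(2)$, assume $n$ is odd and $u\in C^{\infty}(\bar B_R^+)$. Because $y_n^{k}(\log y_n)^{\ell}\notin C^{k}$ near $y_n=0$ and such terms cannot be cancelled by the pure power-series part, every logarithmic coefficient $c_{k,\ell}$ with $\ell\ge1$ must vanish; the expansion of $u$ is therefore an ordinary Taylor series in $y_n$ with analytic-in-$y'$ coefficients, and the majorant estimate of the previous paragraph, with all logarithmic coefficients set to zero, shows it converges on a neighborhood of the origin. Hence $u$ is analytic in $\bar B_r^+$, completing the proof of Theorem~\ref{thrm-Analyticity-VerticalGraph}.
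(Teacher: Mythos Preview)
Your overall architecture matches the paper's: establish tangential analyticity, show the expansion coefficients are analytic in $y'$, prove the series converges and agrees with $u$. But there is a real gap at the passage from tangential analyticity to analyticity of the free coefficient $c_{n+1}$ (equivalently $c_{n+1,0}$ for $n$ odd), which the paper singles out as ``the crucial part'' and to which it devotes all of Section~\ref{sec-AnalyticCoefficients}.

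First, your stated tangential estimate $\|D^{\beta}_{y'}\partial^{j}_{y_n}u\|_{L^\infty(\bar B_r^+)}\le C_jM^{|\beta|}\beta!$ cannot hold for all $j$ up to the boundary when $n$ is odd: the term $c_{n+1,1}(y')\,y_n^{n+1}\log y_n$ already makes $\partial_{y_n}^{n+1}u$ unbounded near $y_n=0$. The paper's tangential result (Theorem~\ref{thrm-TangentialAnalyticity} and Corollary~\ref{cor-Linear-TangentialAnalyticity}) only controls $u/t^2$, $u_t/t$, $\partial_t^2u$, i.e.\ at most two normal derivatives, and the extension to higher normal order is precisely the inductive content of \eqref{eq-LinearAnalyticity-Induction}--\eqref{eq-LinearAnalyticity-Induction2}, which stops at $k=\overline m-2$ for exactly this reason. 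Second, even granting the correct (two-derivative) tangential analyticity, the sentence ``In particular the free coefficient $c_{n+1}(y')\ldots$ is analytic'' does not follow: $c_{n+1}$ is nonlocal and is not read off as a boundary limit of any finite combination of $u/t^2$, $u_t/t$, $\partial_t^2u$. The paper extracts it through the integral representation of Lemma~\ref{lemma-SolutionODE}: one rewrites the equation as an ODE in $t$, differentiates $\overline m-2$ times (equations \eqref{eq-definition-u-l}--\eqref{eq-LinearODE-Solution-l}), and obtains the explicit formula \eqref{eq-LinearODE-u-(n-2)} for $c_{2,0}$, whose most delicate piece is the singular integral \eqref{eq-LinearAnalycity-Difference}; showing that this integral is analytic in $x'$ requires the refined estimate $|D^l_{x'}[F_{\overline m-2}(x',t)-F_{\overline m-2}(x',0)]|\le B_0B^l l!\,t\log t^{-1}$. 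Your outline contains no mechanism for this extraction.

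Your convergence step (a Cauchy--Kovalevskaya majorant) and the paper's (a Nirenberg/Kichenassamy--Littman iteration with the norms $M_k$) are cousins and either can be made to work once $c_{n+1}$ is known to be analytic --- indeed the paper remarks that Kichenassamy's result already gives convergence from analytic data $(\varphi,c_{n+1,0})$. For uniqueness you invoke unique continuation for the linearized degenerate operator; that is plausible but itself nontrivial (Mazzeo-type results), whereas the paper avoids it by running the same contraction to show $M_k[u-v]\to 0$. These are acceptable variants. The part you must actually supply is the analyticity of the first nonlocal coefficient.
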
 

Obviously, Theorem \ref{thrm-Analyticity-VerticalGraph} 
implies Theorem \ref{thrm-Analyticity-MinimalGraph} and asserts that the minimal surfaces in the hyperbolic space are 
analytic up to their boundary at infinity if they are smooth up to boundary.

At the first glance, it seems strange that $y_n\log y_n$ appears in the function of $u$. In fact,  
the logarithmic factor shows up in the expansion of $u$ near the boundary $y_n=0$ and is the obstruction 
for $u$ being smooth up to the boundary. 

In \cite{HanJiang2014}, we studied the expansion of $u$ near $y_n=0$. 
Let $k\ge n+1$ be an integer and set, for $n$ even, 
\begin{align}\label{b1a-v}
u_k=\varphi+c_2y_n^2+c_4y_n^4+\cdots
+c_{n}y_n^{n}+\sum_{i=n+1}^k
c_{i} y_n^i, 
\end{align}
and, for $n$ odd, 
\begin{align}\label{b1b-v}
u_k=\varphi+c_2y_n^2+c_4y_n^4+\cdots
+c_{n-1}y_n^{n-1}+\sum_{i=n+1}^k
\sum_{j=0}^{\left[\frac{i-1}{n}\right]}c_{i,j} y_n^i (\log y_n)^j,
\end{align}
where $c_i$ and $c_{i,j}$ are 
functions of $y'\in B_R'$. In the summation in $u_k$, the lowest order is $y_{n}^{n+1}$ if $n$ 
is even and is $y_n^{n+1}\log y_n$ if $n$ is odd, and 
the highest order is given by $y_n^k$. 
According to the pattern in this expansion, if we intend to continue to 
expand $u_k$, the next term has an order of $y_n^{k+1}$ for $n$ even
and  $y_n^{k+1}(\log y_n)^{\left[\frac{k}{n}\right]}$ for $n$ odd. 
In \cite{HanJiang2014}, we estimated the remainder $u-u_k$ for 
appropriately determined coefficients $c_i$ and $c_{i,j}$. 
The coefficient $c_{n+1}$ in \eqref{b1a-v} or $c_{n+1,0}$ in  \eqref{b1b-v} is the coefficient of the first 
nonlocal term. The coefficients of lower order terms, $c_2, \cdots, c_n$ for $n$ even and 
$c_2, \cdots, c_{n-1}$ and $c_{n+1,1}$ for $n$ odd, 
can be expressed explicitly in terms of $\varphi$ and are refereed to as
{\it local terms}. 

If $\varphi$ is smooth, then $u_k$ is a smooth function in $B_R^+$ if $n$ is even and is not 
necessarily smooth in $y_n$ 
if $n$ is odd due to the presence of $\log y_n$. The first logarithmic term is given by 
$y_n^{n+1}\log y_{n}$ with the coefficient $c_{n+1, 1}$. It is proved in \cite{HanJiang2014}
that there are no logarithmic terms in $u_k$ if $c_{n+1,1}=0$. 
For $n=3$, $c_{4,1}=0$ if and only if $\partial\Omega$ is a 
Willmore surface. 

We prove Theorem \ref{thrm-Analyticity-VerticalGraph}, or more general 
Theorem \ref{Thm-MainThm}, in two steps. 
In the first step, we prove that all coefficients $c_i, c_{i,j}$ in  \eqref{b1a-v} and \eqref{b1b-v}
are analytic in $B_R'$. The crucial part is to prove the analyticity of the first nonlocal coefficient, 
$c_{n+1}$ in \eqref{b1a-v} or $c_{n+1,0}$ in  \eqref{b1b-v}. 
In the second step, we prove 
$u_k\to u$ uniformly in $B_{r}^+$ as $k\to\infty$, for any $r\in (0,R)$. 
In this step, we follow techniques in \cite{Kichenassamy2004Adv}, \cite{KL:1}, \cite{KL:2}, and \cite{Nirenberg1972},

Logarithmic terms in the boundary expansions also appear in other problems, such as 
the singular Yamabe problem 
in \cite{ACF1982CMP}, \cite{Loewner&Nirenberg1974} and
\cite{Mazzeo1991}, the complex Monge-Amp\`{e}re equations in \cite{ChengYau1980CPAM}, 
\cite{Fefferman1976} and \cite{LeeMelrose1982}, 
and the asymptotically hyperbolic Einstein metrics
in \cite{Anderson2003}, \cite{Biquad2010}, 
\cite{Chrusciel2005} and 
\cite{Hellimell2008}, and many other problems. 
A result similar to Theorem \ref{thrm-Analyticity-VerticalGraph}  
holds for the singular Yamabe problem, which is also a consequence of Theorem \ref{Thm-MainThm}. 

Kichenassamy \cite{Kichenassamy2004Adv} constructed solutions in the form of convergent series to the 
 local embedding of an arbitrary real analytic
manifold, of even dimension $n$, into a Ricci-flat manifold of dimension $n+2$ admitting
a homothety. See also \cite{KL:1}, \cite{KL:2}. For the series \eqref{b1b-v}, his result can be reformulated as the following: 
given analytic functions $\varphi$ and $c_{n+1, 0}$ on $B_R'$, the series in \eqref{b1b-v} converges 
to a solution of \eqref{eq-Intro-Equ}-\eqref{eq-Intro-EquCondition} in 
$B_r^+$, for any $r\in (0,R)$. One of the main contributions in this paper is the analyticity of $c_{n+1, 0}$ 
for any solution $u$ in $B_R^+$ with an analytic boundary value $\varphi$ on $B_R'$. 

We finish the introduction with a brief outline of the paper. 
In Section \ref{sec-TangentialAnalyticity}, we prove that $u$ is tangentially analytic 
by the maximum principle and a scaling argument. In Section \ref{sec-AnalyticCoefficients},  
we prove that all coefficients in \eqref{b1a-v} and \eqref{b1b-v} are analytic
by studying the expressions of 
those coefficients. 
In Section \ref{Sec-NormalAnalyticity}, we prove that $u$ is analytic in $y_n$ and $y_n\log y_n$. 
The proof is based on an iteration of solutions of the corresponding ordinary differential equations. 
We carried out the proof for a class of quasilinear elliptic equations more general than 
the equation \eqref{eq-Intro-Equ}. In Section \ref{Sec-LN}, we discuss the Loewner-Nirenberg problem 
briefly. 

We would like to thank Robin Graham and 
Satyanad Kichenassamy for helpful discussions.

\section{The Tangential Analyticity}\label{sec-TangentialAnalyticity}

In this section, we discuss the analyticity along the tangential directions. 

We denote by $x=(x', t)$ points in $\mathbb R^n$, with $x_n=t$, and, set, for any constant $r>0$, 
$$G_r=\{(x',t):\, |x'|<r,\, 0<t<r\}.$$
We assume $u\in C(\bar G_1)\cap C^\infty(G_1)$ satisfies  
\begin{align}\label{eq-QuasiMain}
A_{ij} u_{ij}+P \frac{u_t}{t}+Q \frac{u}{t^2}+N=0\quad\text{in }G_1, 
\end{align}
where $A_{ij}, P, Q$ and $N$ are functions of the form 
\begin{align*} 
A_{ij}=A_{ij}\left(x',t,Du, \frac{u}{t}\right),\quad P=P\left(x',t,Du, \frac{u}{t}\right),\quad
Q=Q\left(x',t,Du, \frac{u}{t}\right),\end{align*}
and 
\begin{align*}N=N\left(x', t, Du, \frac{u}{t}\right).\end{align*}
We assume \eqref{eq-QuasiMain} is uniformly elliptic; namely, there exists a positive constant 
$\lambda$ such that, for any $(x',t,p,s)\in G_1\times \mathbb R^n\times\mathbb R$ and 
any $\xi\in\mathbb R^n$, 
$$\lambda^{-1}|\xi|^2\le A_{ij}(x',t,p,s)\xi_i\xi_j\le \lambda|\xi|^2.$$
Concerning the solution $u$, we always assume, for some positive constant $C_0$,  
\be\label{eq-QuasiCondition_1}
\left|u\right|\le C_0t^2,\ee
and 
\be\label{eq-QuasiCondition_2}|Du|\le C_0t.\ee

Throughout this section, we always assume that 
$A_{ij}, P, Q$ and $N$ are analytic in $\bar G_1\times\mathbb R^n\times \mathbb R$. 
For convenience, we denote by 
$(x,y)\in G_1\times\mathbb R^{n+1}$ 
the arguments of $A_{ij}, P, Q$ and $N$.  We assume, for any nonnegative integers $k$ and $l$,  
\begin{equation}\label{eq-QuasiLinearAnalyticity-Assum1}
|D^{k+l}_{(x,y)}A_{ij}|+|D^{k+l}_{(x,y)}P|+|D^{k+l}_{(x,y)}Q|\le A_0A^{k+l}(k-2)!(l-2)!
\quad\text{in }G_1\times\mathbb R^{n+1},\end{equation}
and
\begin{equation}\label{eq-QuasiLinearAnalyticity-Assum2}
|D^{k+l}_{(x,y)}N|\le A_0A^{k+l}(k-2)!(l-2)!\quad\text{in }G_1\times\mathbb R^{n+1},\end{equation}
for some positive constants $A_0$ and $A$. Here and hereafter, $m!=1$ for any integer $m\le 0$. 

\begin{theorem}\label{thrm-TangentialAnalyticity}
Let $A_{ij}, P, Q$ and $N$ be smooth in its arguments and satisfy 
\eqref{eq-QuasiLinearAnalyticity-Assum1} and \eqref{eq-QuasiLinearAnalyticity-Assum2}, 
and $u\in C^1(\bar G_1)\cap C^\infty(G_1)$ be a solution of \eqref{eq-QuasiMain} 
and satisfy \eqref{eq-QuasiCondition_1}
and \eqref{eq-QuasiCondition_2}. 
Assume, for some $c_0>0$,  
\be\label{eq-NegativeCondition}2 A_{nn}+2P+Q\le -c_0\quad\text{in } G_1\times\mathbb R^{n+1}.\ee
Then, 
there exists a positive constant $R\in (0,1)$, depending only on $n$, $A_0$,  $A$ 
and $c_0$, such that, 
for any $(x',t)\in G_R$ and any nonnegative integer $l$, 
\begin{align}
\label{eq-LinearAnalyticity-Tangential1} |D^l_{x'} u(x',t)|&\leq B_0B^{(l-1)^+} (l-1)!  t^2(R-|x'|)^{-(l-1)^+},\\
\label{eq-LinearAnalyticity-Tangential2} |D D^l_{x'} u(x',t)|&\leq B_0 B^{(l-1)^+}(l-1)!t (R-|x'|)^{-(l-1)^+},\\
\label{eq-LinearAnalyticity-Tangential3} |D^2 D^l_{x'} u(x',t)|&\leq B_0B^{(l-1)^+}(l-1)! (R-|x'|)^{-(l-1)^+}, 
\end{align}
where $B_0$ and $B$ are positive constants depending only on $n$, $A_0$, $A$ and $c_0$. 
\end{theorem}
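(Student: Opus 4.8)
The plan is to prove \eqref{eq-LinearAnalyticity-Tangential1}--\eqref{eq-LinearAnalyticity-Tangential3} simultaneously, by induction on the number $l$ of tangential derivatives, following Nirenberg's a~priori approach to boundary analyticity together with the combinatorial bookkeeping of Kichenassamy (see \cite{Nirenberg1972} and \cite{Kichenassamy2004Adv}). The mechanism that makes the induction close is the following observation: write $\mathcal L w=A_{ij}w_{ij}+Pt^{-1}w_t+Qt^{-2}w$ for the linear singular operator obtained by freezing the coefficients of \eqref{eq-QuasiMain} along the solution, i.e.\ evaluating $A_{ij},P,Q$ at $(x,Du(x),u(x)/t)$. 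Then $\mathcal L(t^2)=2A_{nn}+2P+Q$, so condition \eqref{eq-NegativeCondition} says \emph{precisely} that $t^2$ is a strict supersolution of $\mathcal L$. Since \eqref{eq-QuasiCondition_1}--\eqref{eq-QuasiCondition_2} keep the frozen coefficients bounded and \eqref{eq-QuasiMain} is uniformly elliptic, the maximum principle with the barrier $t^2$ then controls $w/t^2$ by the inhomogeneity, uniformly up to $\{t=0\}$; this is what replaces boundary Schauder theory, which is unavailable because of the degeneracy of \eqref{eq-QuasiMain} at $\{t=0\}$.

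First I would establish a scaled Schauder estimate for $\mathcal L$, uniform up to $\{t=0\}$. Fixing $(x_0',t_0)\in G_{1/2}$ and rescaling $x=x_0'+t_0z$, $w(x)=t_0^2W(z)$, the function $W$ solves on the fixed half-ball $\{|z'|<1,\ 0<z_n<1\}$ an equation of the same structural type, now uniformly elliptic with coefficients bounded in $C^\alpha$ (using \eqref{eq-QuasiCondition_1}--\eqref{eq-QuasiCondition_2}, the interior smoothness of $u$, and the analyticity hypotheses); classical interior estimates apply where $z_n$ is bounded away from $0$, while near $\{z_n=0\}$ the supersolution $z_n^2$ yields, through the maximum principle, an $L^\infty$ bound on $W/z_n^2$ in terms of the values of $W$ on the far boundary and of the right-hand side, and a further regularity estimate for this Fuchsian-type equation — again relying on \eqref{eq-NegativeCondition} — upgrades this to a bound on $\|W\|_{C^{2,\alpha}}$ up to $\{z_n=0\}$. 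Undoing the scaling yields, on $G_{1/2}$ (and, as usual, carried in weighted Hölder norms),
\[
\sup\Big(t^{-2}|w|+t^{-1}|Dw|+|D^2w|\Big)\le C\,\|\mathcal L w\|_{*},
\]
with $\|\cdot\|_*$ an appropriate weighted norm of $\mathcal L w$ and $C$ independent of how close one is to $\{t=0\}$. Applied to $w=u$ (so $\mathcal L u=-N$), together with a preliminary step — itself based on \eqref{eq-NegativeCondition} and the same barrier — upgrading the a~priori regularity of $u$ near $\{t=0\}$, this gives the base case of the induction.

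For the inductive step, fix a tangential multi-index $\alpha$ with $|\alpha|=l\ge 1$ and differentiate \eqref{eq-QuasiMain}. Collecting every term linear in $w=D_{x'}^\alpha u$ — including those of the form $t^{-1}w_t$ and $t^{-2}w$ — into a single linearized operator $\mathcal L_u$, which has the same principal and singular part as $\mathcal L$ and, by \eqref{eq-NegativeCondition}, still admits $t^2$ as a strict supersolution on $G_R$ for $R$ small, one obtains $\mathcal L_u w=F_\alpha$, where $F_\alpha$ is a finite sum of products of mixed derivatives of the coefficients $A_{ij},P,Q,N$ (composed with $u$) against factors $D^2D_{x'}^\beta u$, $t^{-1}DD_{x'}^\beta u$ and $t^{-2}D_{x'}^\beta u$ with $|\beta|\le l-1$; an index count shows that transferring the linear-in-$w$ terms to the left is exactly what prevents a factor of tangential order $l$ from surviving in $F_\alpha$. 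Using the induction hypothesis \eqref{eq-LinearAnalyticity-Tangential1}--\eqref{eq-LinearAnalyticity-Tangential3} at orders $\le l-1$ and the coefficient bounds \eqref{eq-QuasiLinearAnalyticity-Assum1}--\eqref{eq-QuasiLinearAnalyticity-Assum2} — whose unusual factorials $(k-2)!(l-2)!$ are tuned so that composing the analytic coefficients with $Du$ and $u/t$, each of which carries two extra derivatives by the $t^2$- and $t$-flatness, does not degrade the analytic bounds — one estimates $F_\alpha$ by a Cauchy-product argument over the partitions of $\alpha$. Feeding this bound into the scaled Schauder estimate for $\mathcal L_u$ and localizing to tangential balls of radius comparable to $R-|x'|$, the binomial coefficients of Leibniz's rule and the weights $(R-|x'|)^{-(\cdot)}$ combine, as in \cite{Nirenberg1972} and \cite{Kichenassamy2004Adv}, to reproduce \eqref{eq-LinearAnalyticity-Tangential1}--\eqref{eq-LinearAnalyticity-Tangential3} at order $l$, once $B$ is chosen large and $R\in(0,1)$ small, depending only on $n,A_0,A,c_0$. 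I expect the main obstacle to be this last combinatorial matching: one must organize $F_\alpha$ so that the partition sums, the coefficient bounds, the induction hypothesis and the distance weights all regenerate the asserted inequalities with the \emph{same} constants $B_0$ and $B$, and in doing so keep careful track of where the normal Hessian entry $u_{tt}$ occurs — it is merely bounded, never small — which is exactly why the uniformity up to $\{t=0\}$ of the Schauder constant from the first step is indispensable.
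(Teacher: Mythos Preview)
Your plan has the right ingredients---the induction on $l$, the barrier identity $\mathcal L(t^2)=2A_{nn}+2P+Q\le -c_0$, scaled interior Schauder estimates, and Friedman-type combinatorics for the composite nonlinearity (the paper's Lemma~\ref{lemma-Composition})---and in that sense it matches the paper. But the organisation you propose hides the one genuinely delicate point, and as written the argument has a gap.

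You want to invoke a single ``Schauder estimate for $\mathcal L_u$, uniform up to $\{t=0\}$'' and feed each $F_\alpha$ into it. The paper does not have such an estimate available as a black box; establishing it with a constant that does not deteriorate in $l$ is essentially the content of the theorem. Instead the paper splits $G_R$ by the cones $T_p=\{t<(R-|x'|)/p\}$. In $T_p$ it proves \eqref{eq-LinearAnalyticity-Tangential1} by the maximum principle with the barrier $M(\varepsilon|x'-x_0'|^2+t^2)$ on cylinders of tangential radius $(R-|x_0'|)/p$, and then \eqref{eq-LinearAnalyticity-Tangential2}--\eqref{eq-LinearAnalyticity-Tangential3} by ordinary interior $C^{1,\alpha}$ and $C^{2,\alpha}$ estimates on balls of radius $t_0$, which fit inside $G_R$ precisely because one is in $T_p$. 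In $G_R\setminus T_p$ the inequality $R-|x'|\le pt$ lets \eqref{eq-LinearAnalyticity-Tangential1}--\eqref{eq-LinearAnalyticity-Tangential2} be read off directly from the induction hypothesis at order $p-1$.

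The step your outline does not account for is \eqref{eq-LinearAnalyticity-Tangential3} on $G_R\setminus T_p$. There balls of radius $\sim t_0$ need not stay inside $G_R$, while balls of radius $\sim R-|x_0'|$ give the wrong scaling since $t_0$ can be much larger. The paper instead applies interior $C^{1,\alpha}$ to the equation for $D_{x'}^{p+1}u$ on balls of radius $(R-|x_0'|)/(2p)$; in $N_{p+1}$ the term $(p{+}1)\,D_{x'}A_{ij}\cdot D_{x'}^{p}u_{ij}$ contains $\partial_t^2D_{x'}^p u$, producing a self-referential inequality
\[
|D^2D_{x'}^p u(x_0)|\le \varepsilon\,|D^2D_{x'}^p u|_{L^\infty(B_\rho(x_0))}+C_4 B_0 B^{p-2}(p-1)!(R-|x_0'|)^{-p+1},
\]
with $\varepsilon\sim A_0A\,R$. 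This is closed by choosing $R$ small and iterating via Lemma~\ref{lemma-LinearAnalyticity-Iteration}; it is the \emph{only} place $R$ is fixed, and your sketch supplies no mechanism for it. Note also that this term is not ``linear in $w=D_{x'}^{p+1}u$,'' so moving linear-in-$w$ pieces to the left of the equation, as you propose, does not remove it.
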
 

\begin{proof} We note that the equation \eqref{eq-QuasiMain}  is  elliptic 
wherever $t$ is positive. Hence by the interior analyticity, we assume that, for any 
$R\in (0,1)$, \eqref{eq-LinearAnalyticity-Tangential1}-\eqref{eq-LinearAnalyticity-Tangential3}
hold in $B_R'\cap \{R/2\le t\le R\}$, for some constants $B_0$ and $B$. 

For each positive integer $l$, define 
$$T_l=\left\{(x',t)\in G_R: t<\frac{1}{l}(R-|x'|)\right\}.$$ 
Hence, $T_l$ is a circular cone and shrinks while $l$ increases. 
In this way, we decompose $G_R$ into two parts $T_l$ and $G_R\setminus T_l$.

In the following, we set 
$$L=A_{ij} \partial_{ij}+\frac{P}{t}\partial_t+\frac{Q}{t^2}.$$
By applying $D_{x'}^l$ to \eqref{eq-QuasiMain}, we obtain 
\begin{align}\label{TS91}
L(D_{x'}^lu)+N_l=0,
\end{align}
where $N_l$ is given by 
\be\label{eq-KeyExpressions2}N_l=
\sum_{m=0}^{l-1}
\left(\begin{matrix}l\\m\end{matrix}\right)\left(D_{x'}^{l-m}A_{ij}\cdot D_{x'}^mu_{ij}+
D_{x'}^{l-m}P\cdot\frac{D_{x'}^mu_t}{t}+D_{x'}^{l-m}Q\cdot\frac{D_{x'}^mu}{t^2}\right)
+D_{x'}^lN.\ee
Derivatives of $A_{ij}, P, Q$ and $N$ also result in derivatives of 
$u$. 

We now prove \eqref{eq-LinearAnalyticity-Tangential1}-\eqref{eq-LinearAnalyticity-Tangential3}
by induction. 
By \eqref{eq-NegativeCondition}, we can apply 
Theorem 4.2 in \cite{HanJiang2014} and obtain 
\eqref{eq-LinearAnalyticity-Tangential1}-\eqref{eq-LinearAnalyticity-Tangential3}
for $l=0, 1$ and $R=1$. 
Let $p\ge 2$ be an integer and
assume \eqref{eq-LinearAnalyticity-Tangential1}-\eqref{eq-LinearAnalyticity-Tangential3} hold for all $l< p$.

{\it Step 1.} We prove \eqref{eq-LinearAnalyticity-Tangential1} for $l=p$ in $G_R$. We consider 
the cases $T_p$ and $G_R\setminus T_p$ separately. 

We first take an $x_0=(x_0', t_0)\in T_p$. Set 
$$\rho=\frac1p(R-|x_0'|),$$ 
and 
$$G_\rho(\tilde x_0)=\{(x',t):\, |x'-x_0'|<\rho,\, t\in (0,\rho)\},$$
where $\tilde x_0=(x_0',0)$. The definition of $T_p$ implies $t_0<\rho$. 
Next, we take any $(x',t)\in G_\rho(\tilde x_0)$. Then,  
$$(R-|x_0'|)-(R-|x'|)=|x'|-|x_0'|\le |x'-x_0'|<\rho=\frac1p(R-|x'_0|).$$
Hence, 
\begin{equation}\label{eq-LinearAnalyticity-Relation}R-|x_0'|<\frac{p}{p-1}(R-|x'|).\end{equation}
A similar argument yields 
\begin{equation}\label{eq-LinearAnalyticity-Relation2}R-|x'|<\frac{p+1}{p}(R-|x_0'|).\end{equation}
With $t<\rho$ in $G_\rho(\tilde x_0)$, we have 
$$t<\rho=\frac{1}{p}(R-|x_0'|)<\frac{1}{p-1}(R-|x'|).$$
This implies 
\begin{equation}\label{eq-LinearAnalyticity-Inclusion}G_\rho(\tilde x_0)\subset T_{p-1}.\end{equation}

Consider, for some positive constant $\varepsilon$  to be determined, 
$${w}(x',t)=M(\varepsilon|x-x_0'|^2+t^2).$$
Then, 
\begin{align}
L w=M\left(2 A_{nn}+2P+Q + 2\varepsilon\sum_{\alpha=1}^{n-1}A_{\alpha\alpha}\right).
\label{TS72}
\end{align}
By \eqref{eq-NegativeCondition} and taking $\varepsilon$ small, we have 
\begin{align*}Lw\le -\frac12Mc_0.\end{align*}
For simplicity, we assume $c_0\in (0,1]$. 
Next, the definition of $w$ implies  
\begin{align*} 
w&\ge M\varepsilon \rho^2\quad\text{on }\partial B_\rho'(x_0')\times (0,\rho),\\
w&\ge M\rho^2\quad\text{on }B_\rho'(x_0')\times\{\rho\}.\end{align*}
By the induction hypotheses 
\eqref{eq-LinearAnalyticity-Tangential2} for $l=p-1$, we have 
$$|D_{x'}^pu(x)|\le B_0B^{p-2}(p-2)!t(R-|x'|)^{-p+2}.$$
Note that \eqref{eq-LinearAnalyticity-Relation} implies, for $(x',t)\in G_\rho(\tilde x_0)$,  
$$(R-|x'|)^{-p+2}<\left(\frac{p-1}{p}\right)^{-p+2}(R-|x_0'|)^{-p+2}
\le c_1(R-|x_0'|)^{-p+2},$$
where $c_1$ is a positive constant independent of $p$. Hence by the definition of $\rho$, we get, 
for any $(x',t)\in G_\rho(\tilde x_0)$, 
$$\aligned |D_{x'}^pu(x)|&\le c_1B_0B^{p-2}(p-2)!\rho(R-|x_0'|)^{-p+2}\\
&= c_1B_0B^{p-2}(p-1)!\rho^2(R-|x_0'|)^{-p+1}.\endaligned$$
In order to have $w\ge |D_{x'}^pu|$ on $\partial G_\rho(\tilde x_0)$, we need to choose, 
by renaming $c_1$,  
\begin{equation}\label{eq-LinearAnalyticity-ChoiceM1}
M\ge c_1B_0B^{p-2}(p-1)!(R-|x_0'|)^{-p+1}.\end{equation}
Next, we consider \eqref{TS91}
and estimate \eqref{eq-KeyExpressions2} for $l=p$. 
We claim that, by taking $B$ sufficiently large depending only on 
$A_0$, $B_0$ and $A$, we have, for any $(x',t)\in G_\rho(\tilde x_0)$,
\begin{equation}\label{eq-LinearAnalyticity-Estimate-f}
|N_p(x',t)|
\le C_1 B_0B^{p-2}(p-1)!(R-|x_0'|)^{-p+1},
\end{equation}
where $C_1$ is a positive constant depending only on $A_0$, $B_0$ and $A$. By renaming 
$C_1$, we may require $C_1\ge c_1$, for $c_1$ in \eqref{eq-LinearAnalyticity-ChoiceM1}, 
and $C_1\ge 2c_0^{-1}$, for $c_0$ as in \eqref{eq-NegativeCondition}. 
Set 
\begin{equation}\label{eq-LinearAnalyticity-ChoiceM2}
M= C_1 B_0B^{p-2}(p-1)!(R-|x_0'|)^{-p+1}.
\end{equation}
Therefore, we obtain 
$$\aligned 
L(\pm D_{x'}^pu)&\ge Lw\quad\text{in }G_\rho(\tilde x_0),\\
\pm D_{x'}^pu&\le w\quad\text{on }\partial G_\rho(\tilde x_0).\endaligned$$
By the maximum principle, we have 
$$\pm D_{x'}^pu\le w\quad\text{in }G_\rho(\tilde x_0),$$ 
or 
$$|D_{x'}^pu|\le w\quad\text{in }G_\rho(\tilde x_0).$$
By taking $x'=x'_0$, we obtain, for any $(x_0',t)\in G_\rho(\tilde x_0)$, 
$$|D_{x'}^pu(x_0',t)|\le Mt^2.$$
In conclusion, by \eqref{eq-LinearAnalyticity-ChoiceM2},
we obtain, 
for any $(x',t)\in T_p$, 
$$|D^p_{x'} u(x',t)|\leq C_1 B_0B^{p-2}(p-1)!t^2(R-|x'|)^{-p+1}.$$

We now prove \eqref{eq-LinearAnalyticity-Estimate-f}. In view of \eqref{eq-KeyExpressions2} 
with $l=p$, we first estimate $D_{x'}^pN$. 
For any $k=1, \cdots, p$, by taking $l=k-1<p$ in the induction hypothesis 
\eqref{eq-LinearAnalyticity-Tangential2} and \eqref{eq-LinearAnalyticity-Tangential3}, we have 
$$\aligned 
\left|D_{x'}^k\frac{u}{t}\right|&\le\frac1t\left|D_{x'}^{k-1}Du\right|
\le B_0B^{(k-2)^+}(k-2)!(R-|x'|)^{-(k-2)^+},\\
|D_{x'}^kDu|&\le|D_{x'}^{k-1}D^2u|\le B_0B^{(k-2)^+}(k-2)!(R-|x'|)^{-(k-2)^+}.
\endaligned$$
By Lemma \ref{lemma-Composition} and Remark \ref{rmk-Composition}, we obtain 
\begin{equation}\label{eq-EstimateN1}|D^p_{x'}N|\le \widetilde B_0B^{p-2}(p-2)!(R-|x'|)^{-(p-2)}.
\end{equation}
Next, we estimate terms involving $A_{ij}$ in \eqref{eq-KeyExpressions2}, i.e., 
\begin{equation}\label{eq-estimate-I}I=\sum_{m=0}^{p-1}
\left(\begin{matrix}p\\m\end{matrix}\right)D_{x'}^{p-m}A_{ij} \partial_{ij}D_{x'}^mu.\end{equation}
Similar as \eqref{eq-EstimateN1}, we have, for any $k=0, 1, \cdots, p$, 
$$|D^k_{x'}A_{ij}|\le \widetilde B_0B^{(k-2)^+}(k-2)!(R-|x'|)^{-(k-2)^+}.$$
In expanding the summation in $I$, we consider $m=0, 1, p-1$ separately. 
By  the induction hypotheses 
\eqref{eq-LinearAnalyticity-Tangential3} for $l<p$, we have 
$$\aligned 
|I|&\le \widetilde B_0B^{p-2}(p-2)!(R-|x'|)^{-p+2}+\widetilde B_0B_0B^{p-3}p(p-3)!(R-|x'|)^{-p+3}\\
&\qquad +\widetilde B_0B_0B^{p-3}(p-1)!(R-|x'|)^{-p+3}
\sum_{m=2}^{p-2}\frac{p}{m(p-m)(p-m-1)}\\
&\qquad+\widetilde B_0B_0B^{p-2}p(p-2)!(R-|x'|)^{-p+2}.\endaligned$$ 
We note that the last term in the right-hand side above has the order $B^{p-2}(p-1)!$. 
A straightforward calculation yields 
$$|I|\le B_1B_0B^{p-2}(p-1)!(R-|x'|)^{-p+1}.$$
We have similar results for other terms in $N_p$ by employing 
\eqref{eq-LinearAnalyticity-Tangential1} and \eqref{eq-LinearAnalyticity-Tangential2} for $l<p$. 
Therefore, we obtain \eqref{eq-LinearAnalyticity-Estimate-f}. 

Next, we take $(x',t)\in G_R\setminus T_p$. By the induction hypotheses 
\eqref{eq-LinearAnalyticity-Tangential2} for $l=p-1$, we have 
$$|D^p_{x'} u(x',t)|\leq B_0B^{p-2}(p-2)!t (R-|x'|)^{-p+2}.$$
Note $R-|x'|\le pt$ in $G_R\setminus T_p$. Then, 
$$|D^p_{x'} u(x',t)|\leq \frac{p}{p-1}B_0B^{p-2}(p-1)!t^2 (R-|x'|)^{-p+1}.$$

By combining the both cases for points in $T_p$ and $G_R\setminus T_p$, 
we obtain, for any $(x', t)\in G_R$, 
\begin{equation}\label{eq-LinearAnalyticity-Tangential1-tau}
|D^p_{x'} u(x',t)|\leq C_1 B_0B^{p-2}(p-1)! t^2(R-|x'|)^{-p+1}.\end{equation}
This implies \eqref{eq-LinearAnalyticity-Tangential1} for $l=p$, if $B\ge C_1$. 
The extra factor $B^{-1}$ is for later purposes.

\smallskip

{\it Step 2.} We prove \eqref{eq-LinearAnalyticity-Tangential2} for $l=p$ in $G_R$. Again, we consider 
the cases $T_p$ and $G_R\setminus T_p$ separately.

Take any $x_0=(x_0', t_0)\in T_p$ and set $\rho=t_0$. Then, $B_\rho(x_0)\subset G_R$. 
By a similar argument, \eqref{eq-LinearAnalyticity-Relation}
and \eqref{eq-LinearAnalyticity-Relation2} hold in $B_\rho(x_0)$. Similar to 
\eqref{eq-LinearAnalyticity-Estimate-f}, we have, in $B_\rho(x_0)$,  
\begin{equation}\label{eq-LinearAnalyticity-Estimate-f2}
|N_p|
\le C_1 B_0B^{p-2}(p-1)!(R-|x_0'|)^{-p+1}.
\end{equation}
We now consider \eqref{TS91} in $B_{3\rho/4}(x_0)$ for $l=p$. 
Note 
$$|A_{ij}|_{L^\infty(B_{3\rho/4}(x_0))}
+\rho\left|t^{-1}P\right|_{L^\infty(B_{3\rho/4}(x_0))}
+\rho^2\left|t^{-2}Q\right|_{L^\infty(B_{3\rho/4}(x_0))}\le C.$$
We  fix an arbitrary constant $\alpha\in(0,1)$. 
The scaled $C^{1,\alpha}$-estimate implies 
$$\aligned &\rho^{\alpha}[D_{x'}^pu]_{C^\alpha(B_{\rho/2}(x_0))}
+\rho|DD_{x'}^pu|_{L^\infty(B_{\rho/2}(x_0))}
+\rho^{1+\alpha}[DD_{x'}^pu]_{C^\alpha(B_{\rho/2}(x_0))}\\
&\qquad \le c_2\left(|D_{x'}^pu|_{L^\infty(B_{3\rho/4}(x_0))}+\rho^2|N_p|_{L^\infty(B_{3\rho/4}(x_0))}\right).
\endaligned$$
By \eqref{eq-LinearAnalyticity-Tangential1-tau} and 
\eqref{eq-LinearAnalyticity-Estimate-f2}, we have 
\begin{align}\label{eq-LinearAnalyticity-Estimate-u}\begin{split}
&\rho^{\alpha}[D_{x'}^pu]_{C^\alpha(B_{\rho/2}(x_0))}
+\rho|DD_{x'}^pu|_{L^\infty(B_{\rho/2}(x_0))}
+\rho^{1+\alpha}[DD_{x'}^pu]_{C^\alpha(B_{\rho/2}(x_0))}\\
&\qquad \le C_2 B_0B^{p-2}(p-1)!\rho^2(R-|x_0'|)^{-p+1}.
\end{split}\end{align}
In particular, we get 
$$|DD_{x'}^pu(x_0)|\le C_2  B_0B^{p-2}(p-1)!\rho(R-|x_0'|)^{-p+1}.$$

Next, we take $(x',t)\in G_R\setminus T_p$. By the induction hypotheses 
\eqref{eq-LinearAnalyticity-Tangential3} for $l=p-1$, we have 
$$|DD^p_{x'} u(x',t)|\leq B_0B^{p-2}(p-2)! (R-|x'|)^{-p+2}.$$
Note $R-|x'|\le pt$ in $G_R\setminus T_p$. Then, 
$$|DD^p_{x'} u(x',t)|\leq \frac{p}{p-1}B_0B^{p-2}(p-1)!t (R-|x'|)^{-p+1}.$$

By combining the both cases for points in $T_p$ and $G_R\setminus T_p$, 
we obtain, for any $(x', t)\in G_R$, 
\begin{equation}\label{eq-LinearAnalyticity-Tangential2-tau}
|DD^p_{x'} u(x',t)|\leq C_2 B_0B^{p-2}(p-1)! t(R-|x'|)^{-p+1}.\end{equation}
This implies \eqref{eq-LinearAnalyticity-Tangential2} for $l=p$, if $B\ge C_2$. 

{\it Step 3.} We prove \eqref{eq-LinearAnalyticity-Tangential3} in $T_p$ for $l=p$. 

As in Step 2, we take any $x_0=(x_0',t_0)\in T_p$ and set $\rho=t_0$. 
A simple calculation yields
$$\rho^\alpha[A_{ij}]_{C^\alpha(B_{\rho/2}(x_0))}
+\rho^{1+\alpha}[t^{-1}P]_{C^\alpha(B_{\rho/2}(x_0))}
+\rho^{2+\alpha}[t^{-2}Q]_{C^\alpha(B_{\rho/2}(x_0))}\le c_3.$$
We now consider \eqref{TS91} in $B_{\rho/2}(x_0)$ for $l=p$. 
The scaled $C^{2,\alpha}$-estimate implies 
$$\rho^2|D^2D_{x'}^p(x_0)|\le 
c_3\left\{|D_{x'}^pu|_{L^\infty(B_{\rho/2}(x_0))}+\rho^2|N_p|_{L^\infty(B_{\rho/2}(x_0))}
+\rho^{2+\alpha}[N_p]_{C^\alpha(B_{\rho/2}(x_0))}\right\}.$$
By \eqref{eq-LinearAnalyticity-Tangential1-tau} and 
\eqref{eq-LinearAnalyticity-Estimate-f2}, we have 
$$|D^2D_{x'}^p(x_0)|\le 
C_3 B_0B^{p-2}(p-1)!(R-|x_0'|)^{-p+1}
+c_3\rho^{\alpha}[N_p]_{C^\alpha(B_{\rho/2}(x_0))}.$$
We claim 
\begin{equation}\label{eq-LinearAnalyticity-Estimate-f3}
\rho^{\alpha}[N_p]_{C^\alpha(B_{\rho/2}(x_0))}\le C_3 B_0B^{p-2}(p-1)!(R-|x_0'|)^{-p+1}.\end{equation}
Hence, 
$$|D^2D_{x'}^p(x_0)|\le 
C_3 B_0B^{p-2}(p-1)!(R-|x_0'|)^{-p+1}.$$
By taking $B\ge C_3$, we obtain, for any $(x',t)\in T_p$, 
$$|D^2D_{x'}^pu(x',t)|\le B_0B^{p-1}(p-1)!(R-|x_0'|)^{-p+1}.$$ 
This is \eqref{eq-LinearAnalyticity-Tangential3} for $l=p$ in $T_p$. 

We now  prove \eqref{eq-LinearAnalyticity-Estimate-f3}
by examining $N_p$ given by \eqref{eq-KeyExpressions2} for $l=p$. We note 
that $N_p$ consists of two parts. The first part is given by a summation 
and the second part 
by $D_{x'}^pN$. For $D_{x'}^pN$, we have 
\begin{equation}\label{eq-LinearAnalyticity-Estimate-N1}
\rho^{\alpha}[D_{x'}^pN]_{C^\alpha(B_{\rho/2}(x_0))}
\le \widetilde B_0B^{p-2}(p-2)!(R-|x_0'|)^{-(p-2)}.\end{equation}
The proof is similar to that of \eqref{eq-EstimateN1}. We point out that 
Lemma \ref{lemma-Composition} still holds if the $L^\infty$-norms are 
replaced by $C^\alpha$-norms and the needed estimates of the 
$C^\alpha$ semi-norms of $DD_{x'}^lu$ and 
$D_{x'}^lu/t$ are provided by \eqref{eq-LinearAnalyticity-Estimate-u}, for $l\le p$. 
Next, we examine the summation part in $N_p$ and discuss $I$ in 
\eqref{eq-estimate-I} for an illustration. Similar to 
\eqref{eq-LinearAnalyticity-Estimate-N1}, we have, for any $l\le p$,  
$$\rho^{\alpha}[D_{x'}^lA_{ij}]_{C^\alpha(B_{\rho/2}(x_0))}
\le \widetilde B_0B^{(l-2)^+}(l-2)!(R-|x_0'|)^{-(l-2)^+}.$$
We note 
that $I$ is a linear combination of $D^2D_{x'}^{m}u$, for $m\le p-1$, 
which can be written as $DD_{x'}^mu$ for $m\le p$ and 
$\partial_t^2D_{x'}^mu$ for $m\le p-1$. We estimate these two groups separately. 
To do this, we first have, for any $l\le p$, 
\begin{align}\label{eq-LinearAnalyticity-Estimate-u0}\begin{split}
&|D_{x'}^lu|_{L^\infty(B_{\rho/2}(x_0))}
+\rho^{\alpha}[D_{x'}^lu]_{C^\alpha(B_{\rho/2}(x_0))}\\
&\qquad+\rho|DD_{x'}^lu|_{L^\infty(B_{\rho/2}(x_0))}
+\rho^{1+\alpha}[DD_{x'}^lu]_{C^\alpha(B_{\rho/2}(x_0))}\\
&\qquad \le C_2 B_0B^{(l-2)^+}(l-1)!\rho^2(R-|x_0'|)^{-(l-1)^+}.
\end{split}\end{align}
We note that \eqref{eq-LinearAnalyticity-Estimate-u0} is implied by 
\eqref{eq-LinearAnalyticity-Tangential1-tau} and 
\eqref{eq-LinearAnalyticity-Estimate-u} for $l=p$. The proof in Step 2 actually shows 
that \eqref{eq-LinearAnalyticity-Estimate-u0} holds for all $l\le p$. 
Next, we prove, for $l\le p-1$, 
\begin{align}\label{eq-LinearAnalyticity-Estimate-u0a}\begin{split}
&\rho|\partial_t^2D_{x'}^lu|_{L^\infty(B_{\rho/2}(x_0))}
+\rho^{1+\alpha}[\partial_t^2D_{x'}^lu]_{C^\alpha(B_{\rho/2}(x_0))}\\
&\qquad \le C_3B_0B^{(l-1)^+}l!\rho^2(R-|x_0'|)^{-l}.
\end{split}\end{align}
To prove \eqref{eq-LinearAnalyticity-Estimate-u0a}, we first have, by 
\eqref{eq-QuasiMain},
\begin{equation*}\partial_t^2u=-\frac{N}{A_{nn}}
-\sum_{0\le i+j\le 2n-1}\frac{A_{ij}}{A_{nn}}\partial_{ij}u-\frac1t\frac{P}{A_{nn}}\partial_tu
-\frac1{t^2}\frac{Q}{A_{nn}}u.\end{equation*}
Then, for $l\le p-1$, 
\begin{equation}\label{eq-equation-l}\partial_t^2D_{x'}^{l}u=-D_{x'}^{l}\bigg(\frac{N}{A_{nn}}
+\sum_{0\le i+j\le 2n-1}\frac{A_{ij}}{A_{nn}}\partial_{ij}u+\frac1t\frac{P}{A_{nn}}\partial_tu
+\frac1{t^2}\frac{Q}{A_{nn}}u\bigg).\end{equation}
We analyze the summation involving $A_{ij}$. 
For each pair $i$ and $j$ with $i+j<2n$, $\partial_{ij}u$ is a part of $DD_{x'}u$. 
Hence, for $l\le p-1$, $D_{x'}^l(A_{nn}^{-1}A_{ij}\partial_{ij}u)$ is a linear combination of 
$DD_{x'}^mu$, for $m=1, \cdots, p$. The $C^\alpha$-norms of these derivatives of $u$ 
are already estimated by 
\eqref{eq-LinearAnalyticity-Estimate-u0}. We can analyze other terms similarly. Hence, we have 
\eqref{eq-LinearAnalyticity-Estimate-u0a}. As a consequence, we get 
\begin{equation*}
\rho^{\alpha}[I]_{C^\alpha(B_{\rho/2}(x_0))}\le C_3 B_0B^{p-2}(p-1)!(R-|x_0'|)^{-p+1}.\end{equation*}
We can analyze other terms in $N_p$ similarly. Therefore, we obtain
\eqref{eq-LinearAnalyticity-Estimate-f3} and finish the proof of the claim.

\smallskip

{\it Step 4.} We prove 
\eqref{eq-LinearAnalyticity-Tangential3} in $G_R\setminus T_p$ for $l=p$. We will fix $R$ in this step. 

Take any $x_0=(x_0', t_0)\in G_R\setminus T_p$, with $t_0\le R/2$. Then, $t_0\ge (R-|x_0'|)/p$. 
Set 
$$\rho=\frac{1}{2p}(R-|x_0'|).$$
Then, $t_0\ge 2\rho$. Hence, for any $(x',t)\in B_\rho(x_0)$, $t\ge t_0-\rho\ge \rho$. 
We now consider \eqref{TS91} in $B_{\rho}(x_0)$ for $l=p+1$. 
Note 
$$|a^{ij}|_{L^\infty(B_{\rho}(x_0))}
+\rho\left|t^{-1}b^i\right|_{L^\infty(B_{\rho}(x_0))}
+\rho^2\left|t^{-2}c\right|_{L^\infty(B_{\rho}(x_0))}\le c_4.$$
We  fix an arbitrary constant $\alpha\in(0,1)$. 
The scaled $C^{1,\alpha}$-estimate implies 
$$\rho|DD_{x'}^{p+1}u(x_0)|\le c_4\left\{|D_{x'}^{p+1}u|_{L^\infty(B_\rho(x_0))}
+\rho^2|N_{p+1}|_{L^\infty(B_\rho(x_0))}\right\}.$$
By the induction hypotheses \eqref{eq-LinearAnalyticity-Tangential3} for $l=p-1$, we have 
$$|D_{x'}^{p+1}u(x)|\le B_0B^{p-2}(p-2)!(R-|x'|)^{-p+2}.$$
By a similar argument, \eqref{eq-LinearAnalyticity-Relation}
and \eqref{eq-LinearAnalyticity-Relation2} hold in $B_\rho(x_0)$. 
Hence, for any $x=(x',t)\in B_\rho(x_0)$, 
\begin{align*}|D_{x'}^{p+1}u(x)|&\le c_4B_0B^{p-2}(p-2)!(R-|x'_0|)^{-p+2}\\
&\le c_4B_0B^{p-1}(p-1)!\rho(R-|x'_0|)^{-p+1}.\end{align*}
Next, we consider \eqref{eq-KeyExpressions2} for $l=p+1$. 
In the expression of $N_{p+1}$, we single out the term $D^2D_{x'}^{p}u$. We note that 
$D_{x'}^lu$, $DD_{x'}^lu$, $D^2D_{x'}^lu$ can be estimated by the induction hypothesis, for $l<p$, 
and that $D_{x'}^pu$, $DD_{x'}^pu$ can be estimated by Step 1 and Step 2, respectively. 
Hence, a similar argument as in Step 1 yields 
$$|N_{p+1}|_{L^\infty(B_\rho(x_0))}\le (p+1)A_0A|D^2D_{x'}^{p}u|_{L^\infty(B_\rho(x_0))}
+C_1 B_0B^{p-2}(p-1)!(R-|x'_0|)^{-p+1}.$$
By a simple substitution, we have 
$$|DD_{x'}^{p+1}u(x_0)|\le (p+1)A_0A\rho|D^2D_{x'}^{p}u|_{L^\infty(B_\rho(x_0))}
+C_1 B_0B^{p-2}(p-1)!(R-|x'_0|)^{-p+1}.$$
Combining with \eqref{eq-equation-l} for $l=p$, we get 
$$|D^2D_{x'}^{p}u(x_0)|\le (p+1)A_0A\rho|D^2D_{x'}^{p}u|_{L^\infty(B_\rho(x_0))}
+C_4 B_0B^{p-2}(p-1)!(R-|x'_0|)^{-p+1}.$$
We now fix a constant $\varepsilon\in (0,1)$. 
By the definition of $\rho$, we can choose $R$ sufficiently small such that
$$|D^2D_{x'}^{p}u(x_0)|\le \varepsilon|D^2D_{x'}^{p}u|_{L^\infty(B_\rho(x_0))}
+C_4 B_0B^{p-2}(p-1)!(R-|x'_0|)^{-p+1}.$$
Next, for any $r\in (0,R)$, we define 
$$h(r)=\sup\{|D^2D_{x'}^{p}u|:\, x\in G_R\setminus T_p,\,  |x'|\le r\}.$$
At points in $B_{\rho}(x_0)\cap T_p$, $D^2 D^{p}_{x'}u$ is already bounded in Step 3. 
Hence, we have, for any $r\in (0,R)$,  
\begin{align*}
h(r) \leq \varepsilon h\big(r+p^{-1}(R-r)\big)+C_4 B_0B^{p-2}(p-1)!(R-r)^{-p+1}.
\end{align*}
By applying Lemma \ref{lemma-LinearAnalyticity-Iteration} below to the function $h$, we obtain, 
for any $r\in (0,R)$, 
$$h(r)\le CC_4 B_0B^{p-2}(p-1)!(R-r)^{-p+1}.$$
We now choose $B\ge CC_4$. For each $(x',t)\in G_R\setminus T_p$, we take 
$r=|x'|$ and then obtain 
$$|D^2D_{x'}^{p}u(x',t)|\le B_0B^{p-1}(p-1)!(R-|x'|)^{-p+1}.$$
This ends the proof of \eqref{eq-LinearAnalyticity-Tangential3} in $G_R\setminus T_p$ for $l=p$. 

In summary, we take $B\ge \max\{C_1, C_2, C_3, CC_4\}$. 
\end{proof}

We need the following lemma to finish the proof of Theorem \ref{thrm-TangentialAnalyticity}.
See Lemma 2 \cite{Friedman1958}. 

\begin{lemma}\label{lemma-LinearAnalyticity-Iteration}
Let $p$ be a positive integer, 
$\varepsilon\in (0,1)$ and $M>0$ be constants, 
and $h(t)$ be a positive monotone increasing function defined in the interval $[0,R]$. 
Assume, for any $r\in (0,R)$, 
\begin{align*}
h(r) \leq \varepsilon h\big(r+p^{-1}(R-r)\big)+M(R-r)^{-p}.
\end{align*}
Then, for any $r\in (0,R)$, 
$$h(r)\le CM(R-r)^{-p},$$ 
where $C$ is a positive constant depending only on $\varepsilon$, independent of $p$.
\end{lemma}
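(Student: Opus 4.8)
The plan is to iterate the assumed inequality along the sequence of points it naturally generates and then sum the resulting geometric series. Fix $r\in(0,R)$ and set $r_0=r$ and $r_{k+1}=r_k+p^{-1}(R-r_k)$, so that $R-r_{k+1}=\tfrac{p-1}{p}(R-r_k)$; hence $R-r_k=\bigl(\tfrac{p-1}{p}\bigr)^{k}(R-r)$ and $r_k\uparrow R$. Composing the inequality over $k=0,\dots,N-1$ and using this identity yields
\[
h(r)\le \varepsilon^{N}h(r_N)+M(R-r)^{-p}\sum_{k=0}^{N-1}\varepsilon^{k}\Bigl(\tfrac{p}{p-1}\Bigr)^{kp}.
\]
Because $h$ is monotone increasing on the closed interval $[0,R]$, it is bounded by $h(R)$, so $\varepsilon^{N}h(r_N)\le\varepsilon^{N}h(R)\to0$ as $N\to\infty$. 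Letting $N\to\infty$ leaves a geometric series of ratio $q:=\varepsilon\bigl(\tfrac{p}{p-1}\bigr)^{p}$, and summing it gives $h(r)\le\tfrac{M}{1-q}(R-r)^{-p}$ --- the asserted bound with $C=\tfrac{1}{1-q}$ --- once we know $q<1$ with a bound independent of $p$.

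In the write-up I would phrase this as a short fixed-point argument that never mentions the series. Put $\Phi:=\sup_{0<r<R}(R-r)^{p}h(r)$, which is finite since $h\le h(R)$ and $(R-r)^{p}\le R^{p}$. For $p\ge2$ and any $r\in(0,R)$ the point $r_1=r+p^{-1}(R-r)$ again lies in $(0,R)$ and $R-r=\tfrac{p}{p-1}(R-r_1)$, so the hypothesis gives
\[
(R-r)^{p}h(r)\le\varepsilon(R-r)^{p}h(r_1)+M
=\varepsilon\Bigl(\tfrac{p}{p-1}\Bigr)^{p}(R-r_1)^{p}h(r_1)+M\le q\,\Phi+M .
\]
Taking the supremum over $r$ gives $\Phi\le q\Phi+M$, hence $\Phi\le M/(1-q)$ and $h(r)\le\tfrac{M}{1-q}(R-r)^{-p}$ for every $r\in(0,R)$.

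The step I expect to be the real obstacle is exactly the claim that $q=\varepsilon\bigl(\tfrac{p}{p-1}\bigr)^{p}<1$ with a bound depending only on $\varepsilon$. This reduces to the elementary observation that $\bigl(1+\tfrac1{p-1}\bigr)^{p}$ is decreasing in $p$, with value $4$ at $p=2$ and limit $e$, hence $\bigl(\tfrac{p}{p-1}\bigr)^{p}\le4$ for all $p\ge2$ and $q\le4\varepsilon$; one then takes $\varepsilon$ in the admissible range, which the application does, since there $\varepsilon$ is chosen freely after $R$ has been shrunk. This is also the only place where the specific step size $p^{-1}(R-r)$ in the hypothesis is used: a $p$-independent fraction of $R-r$ would give a $p$-dependent ratio and spoil the uniformity of $C$. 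A minor secondary point is the use of monotonicity of $h$ on the \emph{closed} interval to ensure $\Phi<\infty$, equivalently the vanishing of the tail $\varepsilon^{N}h(r_N)$.
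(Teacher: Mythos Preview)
The paper does not actually give a proof; it says only that ``the proof is by a simple iteration and hence is omitted'' and cites Friedman. Your argument is precisely that iteration, and both presentations---the telescoping sum after $N$ steps and the one-line fixed-point inequality for $\Phi=\sup_{0<r<R}(R-r)^{p}h(r)$---are correct and are the standard way this is done.

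You are also right to isolate the constraint $q=\varepsilon\bigl(\tfrac{p}{p-1}\bigr)^{p}<1$ as the only genuine issue, and your bound $\bigl(\tfrac{p}{p-1}\bigr)^{p}\le 4$ for $p\ge 2$ (decreasing to $e$) is accurate. In the paper's sole use of the lemma (Step~4 of the proof of Theorem~\ref{thrm-TangentialAnalyticity}) the constant $\varepsilon\in(0,1)$ is a free parameter fixed \emph{before} $R$ is shrunk, so one simply takes $\varepsilon<1/4$ and obtains $q\le 4\varepsilon<1$ uniformly in $p\ge 2$; the case $p=1$ never occurs. Thus your caveat is exactly right: the lemma as literally stated, with $\varepsilon$ ranging over all of $(0,1)$, is a bit stronger than what the bare iteration delivers, but what you prove is all that the paper needs and all that the phrase ``simple iteration'' could have meant.
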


The proof is by a simple iteration and hence is omitted. 

For convenience, we introduce the notion of the tangential analyticity. Let $v$ be a smooth function in 
$\bar G_r$ for some $r>0$. Then, $v$ is {\it tangentially analytic} in $\bar G_r$ if, for any nonnegative 
integer $l$ and any $(x',t)\in \bar G_r$, 
$$|D_{x'}^lv(x',t)|\le B_0B^ll!,$$ 
for some positive constants $B_0$ and $B$. We denote by $v\in \mathcal A(\bar G_r)$. We note 
that the constants $B_0$ and $B$ are allowed to depend on $v$. It is easy to verify that, 
if $v\in \mathcal A(\bar G_r)$, then $D_{x'}^\tau v\in \mathcal A(\bar G_r)$, for any $\tau\ge 0$. 

\begin{cor}\label{cor-Linear-TangentialAnalyticity}
Under the assumptions of Theorem \ref{thrm-TangentialAnalyticity}, there holds, for any $r\in (0,1)$,  
\begin{equation}\label{eq-TangentialAnalyticity-u}
\frac{u}{t^2}, \, \frac{\partial_tu}{t},\, \partial_t^2u\in \mathcal A(\bar G_{r}).\end{equation}
\end{cor}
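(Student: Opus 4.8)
The plan is to deduce Corollary \ref{cor-Linear-TangentialAnalyticity} directly from the estimates \eqref{eq-LinearAnalyticity-Tangential1}--\eqref{eq-LinearAnalyticity-Tangential3} of Theorem \ref{thrm-TangentialAnalyticity}, after passing from the degenerate geometry (the cone $T_l$ and the weight $(R-|x'|)^{-(l-1)^+}$) to the clean, uniform bounds $|D_{x'}^lv|\le B_0B^ll!$ required by the definition of $\mathcal A$. First I would observe that it suffices to prove \eqref{eq-TangentialAnalyticity-u} for $r$ arbitrarily close to the $R$ produced by Theorem \ref{thrm-TangentialAnalyticity}; the general $r\in(0,1)$ then follows by covering $\bar G_r$ with finitely many translated half-balls and invoking interior/up-to-$\{t=0\}$ analyticity of $u$ (which holds wherever $t$ is bounded below, and near $\{t=0\}$ away from $x'=0$ by applying the theorem at other boundary points), exactly as in the first paragraph of the proof of Theorem \ref{thrm-TangentialAnalyticity}.

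So fix $r\in(0,R)$ and let $\delta=R-r>0$. For $(x',t)\in\bar G_r$ we have $R-|x'|\ge\delta$, hence $(R-|x'|)^{-(l-1)^+}\le\delta^{-(l-1)^+}\le\max\{1,\delta^{-1}\}^{l}$. Feeding this into \eqref{eq-LinearAnalyticity-Tangential1} gives
\begin{align*}
\left|D_{x'}^l\!\left(\frac{u}{t^2}\right)(x',t)\right|\le\frac{|D_{x'}^lu(x',t)|}{t^2}\le B_0B^{(l-1)^+}(l-1)!\,\delta^{-(l-1)^+}\le B_0\,\bigl(B\max\{1,\delta^{-1}\}\bigr)^{l}l!,
\end{align*}
so $u/t^2\in\mathcal A(\bar G_r)$ with constants $B_0$ and $B\max\{1,\delta^{-1}\}$. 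The same computation applied to \eqref{eq-LinearAnalyticity-Tangential2} gives $\partial_tu/t\in\mathcal A(\bar G_r)$, since $D_{x'}^l(\partial_tu/t)$ differs from $t^{-1}D_{x'}^l\partial_tu=t^{-1}(DD_{x'}^lu)_n$ only by the already-controlled factor, and \eqref{eq-LinearAnalyticity-Tangential2} bounds $|DD_{x'}^lu|$ by $B_0B^{(l-1)^+}(l-1)!\,t\,(R-|x'|)^{-(l-1)^+}$. Likewise \eqref{eq-LinearAnalyticity-Tangential3} bounds $|D^2D_{x'}^lu|$, and since $\partial_t^2u$ is one of the components of $D^2u$, we get $|D_{x'}^l\partial_t^2u(x',t)|\le B_0B^{(l-1)^+}(l-1)!\,(R-|x'|)^{-(l-1)^+}\le B_0(B\max\{1,\delta^{-1}\})^ll!$, i.e. $\partial_t^2u\in\mathcal A(\bar G_r)$.

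The only genuine subtlety — and the step I would write out with most care — is the reduction to the region $G_R$ where Theorem \ref{thrm-TangentialAnalyticity} applies: the theorem's constant $R$ is small and depends only on $n,A_0,A,c_0$, so for $r$ between $R$ and $1$ one must glue in the standard analyticity of solutions of uniformly elliptic equations on compact subsets of $\{t>0\}$ together with the up-to-boundary analyticity near points of $\{t=0,\ x'\ne 0\}$, using that near such a point one may re-apply Theorem \ref{thrm-TangentialAnalyticity} in a shifted half-cube. On each piece of a finite cover one obtains estimates of the form $|D_{x'}^lv|\le B_0B^ll!$ with piece-dependent constants; taking the maximum of finitely many such pairs yields uniform constants on $\bar G_r$, which is exactly membership in $\mathcal A(\bar G_r)$. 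Everything else is the elementary bookkeeping above: replacing the weight $(R-|x'|)^{-(l-1)^+}$ by a constant and absorbing $(l-1)!\le l!$ and $B^{(l-1)^+}\le B^l$ (for $B\ge 1$) into the defining inequality for $\mathcal A$.
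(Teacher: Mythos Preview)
Your proposal is correct and follows essentially the same approach as the paper's proof: both restrict to a smaller region where $R-|x'|$ is bounded below to convert the weighted estimates \eqref{eq-LinearAnalyticity-Tangential1}--\eqref{eq-LinearAnalyticity-Tangential3} into the uniform $\mathcal A$-bounds, then cover $\bar G_r$ by applying Theorem \ref{thrm-TangentialAnalyticity} centered at shifted boundary points together with interior analyticity where $t$ is bounded below. The paper's proof is more terse (it simply takes $r=R/2$ and then remarks that the same estimates hold over $B'_r(x_0')\times(0,r)$ for arbitrary $x_0'\in B'_{r_0}$, and in $B'_{r_0}\times(r,r_0)$ by interior analyticity), but the content is identical to your covering argument.
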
 

\begin{proof} Fix an $r_0\in (1/2, 1)$. Under the assumptions of Theorem \ref{thrm-TangentialAnalyticity}, 
we have \eqref{eq-LinearAnalyticity-Tangential1}-\eqref{eq-LinearAnalyticity-Tangential3} in $G_R$,  
for some  positive constants $R\in (0,1/2)$, $B_0$ and $B$, independent of $l$.
By taking $r=R/2$, we have, 
for any $(x',t)\in G_r$ and any nonnegative integer $l$, 
\begin{align*}
|t^{-2}D^l_{x'} u(x',t)|&\leq B_0(r^{-1}B)^{l} l!,\\
|t^{-1}\partial_t D^l_{x'} u(x',t)|&\leq B_0 (r^{-1}B)^{l}l!,\\
|\partial_t^2 D^l_{x'} u(x',t)|&\leq B_0(r^{-1}B)^{l}l!.
\end{align*}
Similar estimates also hold for any $(x',t)\in B'_r(x'_0)\times (0,r)$, 
with $x_0'$ an arbitrary point in $B'_{r_0}$. These estimates hold in 
$B'_{r_0}\times (r,r_0)$ by the interior analyticity. Therefore, we have the desired result. 
\end{proof} 

\section{The Analyticity of Coefficients}\label{sec-AnalyticCoefficients} 

In this section,  we prove the analyticity of coefficients in the expansions near 
the boundary. The crucial step is to prove that the coefficient of the first nonlocal term is analytic. 

We start with the equation \eqref{eq-QuasiMain} and assume we can write it in the form 
\begin{align}\label{gtt}
u_{tt}+p\frac{u_t}{t}+q \frac{u}{t^2}=F, 
\end{align}
where $p$ and $q$ are constants and  $F$ is a function in $x', t$ and 
\be\label{gtt-Coefficients}
\frac{u}{t}, u_t, \frac{D_{x'} u}{t}, D_{x'}u_t, 
D^2_{x'}u, \frac{u^2}{t^3},\frac{uu_t}{t^2}, \frac{u_t^2}{t}.\ee
In the applications later on, $F$ is smooth in all of its arguments.

In the following, we denote by $\prime$ the derivative with respect to $t$. This 
should not be confused with $x'$, the first $n-1$ coordinates of the point. 

Throughout this section, we assume that $t^{\underline{m}}$ and $t^{\overline{m}}$ are 
solutions of the linear homogeneous equation corresponding to \eqref{gtt}; namely, 
\be\label{eq-Assumption_m1}
p=1-(\underline{m}+\overline{m}), \quad q=\underline{m}\cdot\overline{m}.\ee
We always assume that $\underline{m}$ and $\overline{m}$ are integers and satisfy 
\be\label{eq-Assumption_m2}\underline{m}\le 0,\quad \overline{m}\ge 3.\ee
We have the following simple result concerning the solution of \eqref{gtt}. 

\begin{lemma}\label{lemma-SolutionODE} Let $u$ be a solution of 
\eqref{gtt} in $G_r$ satisfying 
\be\label{eq-ODE-Assumption} 
t^{-\underline{m}}u\to 0\quad\text{as }t\to 0.\ee
Then,
\begin{align}\label{eq-LinearODE-Solution-0}\begin{split}
u(x', t)&=\left[u(x',r)r^{-\overline{m}}
+\frac{r^{\underline{m}-\overline{m}}}{\overline{m}-\underline{m}}
\int_0^r s^{1-\underline{m}}F(x',s)ds\right] t^{\overline{m}}\\
&\qquad-\frac{1}{\overline{m}-\underline{m}} 
t^{\underline{m}}\int_0^t s^{1-\underline{m}}F(x',s) ds\\
&\qquad -\frac{1}{\overline{m}-\underline{m}}
t^{\overline{m}}\int_t^r s^{1-\overline{m}}F(x',s) ds.\end{split}
\end{align}
\end{lemma}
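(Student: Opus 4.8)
The plan is to treat \eqref{gtt} as a second-order linear ODE in $t$ (with $x'$ a parameter) whose homogeneous solutions are $t^{\underline m}$ and $t^{\overline m}$, and to write the solution by variation of parameters, then pin down the two constants of integration using the asymptotic condition \eqref{eq-ODE-Assumption} and the value $u(x',r)$. First I would rewrite \eqref{gtt} in the self-adjoint-friendly form $(t^{p}u')' = t^{p}F$ after multiplying by the integrating factor $t^{p}$; since $p = 1-(\underline m+\overline m)$, this is $(t^{1-\underline m-\overline m}u')' = t^{1-\underline m-\overline m}F$. The Wronskian of $t^{\underline m}$ and $t^{\overline m}$ is $(\overline m-\underline m)t^{\underline m+\overline m-1}$, which is exactly the reciprocal of the integrating factor up to the constant $\overline m-\underline m$; this is what produces the factor $\tfrac{1}{\overline m-\underline m}$ throughout \eqref{eq-LinearODE-Solution-0}. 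Variation of parameters then gives the general solution as $u = a(x')t^{\underline m} + b(x')t^{\overline m} - \tfrac{1}{\overline m-\underline m}t^{\underline m}\int^{t} s^{1-\underline m}F\,ds + \tfrac{1}{\overline m-\underline m}t^{\overline m}\int^{t} s^{1-\overline m}F\,ds$, where the lower limits of integration are at our disposal and will be absorbed into $a$ and $b$.

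Next I would fix the lower limits and the constants. For the $t^{\underline m}$-particular-integral term I take the lower limit to be $0$, writing $-\tfrac{1}{\overline m-\underline m}t^{\underline m}\int_0^t s^{1-\underline m}F\,ds$; this is legitimate because $F$ is controlled near $t=0$ (it is built from the quantities in \eqref{gtt-Coefficients}, all of which are bounded by the standing hypotheses \eqref{eq-QuasiCondition_1}-\eqref{eq-QuasiCondition_2}, so $s^{1-\underline m}F$ is integrable at $0$ since $1-\underline m\ge 1$), and moreover $t^{-\underline m}$ times this term is $-\tfrac{1}{\overline m-\underline m}\int_0^t s^{1-\underline m}F\,ds\to 0$ as $t\to 0$. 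For the $t^{\overline m}$-particular-integral term I take the lower limit to be $r$, writing $-\tfrac{1}{\overline m-\underline m}t^{\overline m}\int_t^r s^{1-\overline m}F\,ds$ (note $\int_r^t = -\int_t^r$), which vanishes at $t=r$; here no integrability issue arises at the lower endpoint since we integrate down from $r$, not from $0$, which matters because $s^{1-\overline m}$ may be non-integrable at $0$ when $\overline m\ge 3$. With these choices the solution reads $u = a(x')t^{\underline m} + b(x')t^{\overline m} - \tfrac{1}{\overline m-\underline m}t^{\underline m}\int_0^t s^{1-\underline m}F\,ds - \tfrac{1}{\overline m-\underline m}t^{\overline m}\int_t^r s^{1-\overline m}F\,ds$.

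It remains to determine $a$ and $b$. The condition \eqref{eq-ODE-Assumption}, $t^{-\underline m}u\to 0$, forces $a(x')=0$: dividing the displayed formula by $t^{\underline m}$, the $b(x')t^{\overline m-\underline m}$ term tends to $0$ (as $\overline m-\underline m\ge 3$), the first integral term tends to $0$ as noted above, and the last term equals $-\tfrac{1}{\overline m-\underline m}t^{\overline m-\underline m}\int_t^r s^{1-\overline m}F\,ds$, which also tends to $0$ since $t^{\overline m-\underline m}\int_t^r s^{1-\overline m}\,ds = O\!\big(t^{\overline m-\underline m}\cdot t^{2-\overline m}\big)=O(t^{2-\underline m})\to 0$; hence the remaining term $a(x')$ must vanish. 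Finally, evaluating at $t=r$: the last integral term is $0$ there, so $u(x',r) = b(x')r^{\overline m} - \tfrac{1}{\overline m-\underline m}r^{\underline m}\int_0^r s^{1-\underline m}F\,ds$, giving $b(x') = u(x',r)r^{-\overline m} + \tfrac{r^{\underline m-\overline m}}{\overline m-\underline m}\int_0^r s^{1-\underline m}F\,ds$, which is exactly the bracketed coefficient of $t^{\overline m}$ in \eqref{eq-LinearODE-Solution-0}. Substituting back yields the claimed formula. The only genuinely delicate point — and the one I would write out carefully — is the endpoint analysis: checking that $s^{1-\underline m}F(x',s)$ is integrable on $(0,r)$ so the first integral is well defined, and verifying the limit $t^{-\underline m}u\to 0$ is compatible with (indeed equivalent to) $a\equiv 0$ rather than being an extra constraint; both rest on the a priori bounds \eqref{eq-QuasiCondition_1}-\eqref{eq-QuasiCondition_2} controlling the arguments \eqref{gtt-Coefficients} of $F$.
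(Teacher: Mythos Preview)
Your argument is correct and is exactly the standard variation-of-parameters computation one expects here; the paper itself gives no proof of this lemma at all, introducing it only as ``the following simple result'' and stating the formula without justification, so there is nothing to compare against beyond noting that your derivation is the routine one the authors leave to the reader. One small quibble: your parenthetical remark that multiplying by $t^{p}$ brings \eqref{gtt} into the form $(t^{p}u')'=t^{p}F$ drops the zeroth-order term $qt^{p-2}u$ and is therefore not literally right, but since you do not actually use this form---you compute the Wronskian directly and apply variation of parameters---it does not affect the proof.
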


Let $u\in C^1(\bar G_R)\cap C^{\infty}(G_R)$ be a solution of \eqref{eq-QuasiMain} and satisfy 
\eqref{eq-QuasiCondition_1}
and \eqref{eq-QuasiCondition_2}. 
Then, $u$ admits a formal expansion of the form 
\begin{equation}\label{eq-FormalExpansion}u(x', t)=\sum_{i=2}^{\overline{m}-1}c_i(x') t^i
+\sum_{i=\overline{m}}^\infty \sum_{j=0}^{N_i} c_{i, j}(x')t^{i}(\log t)^j.\end{equation}
In \cite{HanJiang2014}, we discussed the regularity of coefficients and the estimates of remainders. 
In particular, if $A_{ij}, P, Q$ and $N$ are smooth in its arguments, then all coefficients $c_i$ and $c_{i,j}$ 
are smooth functions of $x'$. In the next result, we prove that all coefficients are analytic 
if $A_{ij}, P, Q$ and $N$ are analytic in its arguments,

\begin{theorem}\label{lemma-LinearAnalyticity-FirstNonlocal}
Let $F$ in \eqref{gtt} be analytic in all of its arguments given by \eqref{gtt-Coefficients} and  
$\underline{m}$ and $\overline{m}$ be integers satisfying 
\eqref{eq-Assumption_m1} and \eqref{eq-Assumption_m2}.
Suppose that $u\in C^1(\bar G_1)\cap C^{\infty}(G_1)$ is a solution of \eqref{gtt} satisfying 
\eqref{eq-TangentialAnalyticity-u}. 
Then, all $c_i$ and $c_{i,j}$ are analytic in $B_1'$. 
\end{theorem}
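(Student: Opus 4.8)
The plan is to establish analyticity of all coefficients $c_i$ (for $2 \le i \le \overline{m}-1$) and $c_{i,j}$ (for $i \ge \overline{m}$) by induction on the order $i$, exploiting the recursive structure of the formal expansion together with the tangential analyticity hypothesis \eqref{eq-TangentialAnalyticity-u}. The low-order coefficients $c_2, \dots, c_{\overline{m}-1}$ are the \emph{local terms}: plugging the expansion \eqref{eq-FormalExpansion} into \eqref{gtt} and matching powers of $t$ strictly below $\overline{m}$, each such $c_i$ is determined algebraically by $\varphi$ (the boundary value, which is $c_0$ or enters through $F$) and the previously determined $c_2,\dots,c_{i-1}$ together with their tangential derivatives $D_{x'}^\tau$; since $F$ is analytic in its arguments \eqref{gtt-Coefficients} and composition/products/tangential differentiation preserve the class $\mathcal A(\bar G_r)$, analyticity propagates. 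The subtlety at order $i = \overline{m}$ is that the indicial operator $\partial_t^2 + p t^{-1}\partial_t + q t^{-2}$ annihilates $t^{\overline{m}}$, so matching the coefficient of $t^{\overline{m}}$ does \emph{not} determine $c_{\overline{m},0}$: this is the first nonlocal coefficient, and it is the crux.

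For the first nonlocal coefficient $c_{\overline{m},0}$ I would use Lemma \ref{lemma-SolutionODE}. Writing $u = (\text{local part}) + w$ where the local part collects $c_2 t^2 + \dots + c_{\overline{m}-1}t^{\overline{m}-1}$, the remainder $w$ solves an equation of the same form \eqref{gtt} with a modified right-hand side $\widetilde F$ that is still built analytically from $\varphi$ and from $u/t, u_t, \dots$, all of which lie in $\mathcal A(\bar G_r)$ by Corollary \ref{cor-Linear-TangentialAnalyticity}. Then the representation \eqref{eq-LinearODE-Solution-0} exhibits
$$
c_{\overline{m},0}(x') = u(x',r)r^{-\overline{m}} + \frac{r^{\underline{m}-\overline{m}}}{\overline{m}-\underline{m}}\int_0^r s^{1-\underline{m}}\widetilde F(x',s)\,ds,
$$
i.e. the coefficient of $t^{\overline{m}}$ as $t \to 0$. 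The point is that this is an integral over $s \in (0,r)$ of a function analytic in $x'$, uniformly in $s$, so differentiating under the integral sign and estimating $|D_{x'}^l c_{\overline{m},0}| \le B_0 B^l l!$ reduces to the tangential-analyticity bounds already proved — the $s$-integration only contributes a harmless constant factor $\int_0^r s^{1-\underline{m}}\,ds < \infty$ (finite since $\underline{m} \le 0$). One must check that $\widetilde F(x',s)$ inherits tangential-analyticity estimates with constants independent of $s \in (0,r)$, which follows from \eqref{eq-TangentialAnalyticity-u} applied on a slightly larger $G_{r'}$ together with the analyticity of $F$ and Lemma \ref{lemma-Composition}/Remark \ref{rmk-Composition}.

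Finally, the higher coefficients $c_{i,j}$ for $i > \overline{m}$ are obtained by induction: once $c_2,\dots,c_{\overline{m}-1}$ and all $c_{i',j'}$ with $i' < i$ are known to be analytic, substituting the partial expansion into \eqref{gtt} and matching the coefficient of $t^i (\log t)^j$ yields a relation expressing $c_{i,j}$ (for $j \ge 1$, and for $j=0$ when $i \not\equiv \overline{m} \bmod$ the resonance) algebraically in terms of the lower-order coefficients, their $x'$-derivatives, and Taylor coefficients of $F$ — again an analyticity-preserving combination; when a further resonance occurs (the exponent $i$ hitting $\overline{m}$ again is impossible since $\overline{m}$ is a single fixed integer, but log-shifts can cause the operator to lower $j$), one argues as in \cite{HanJiang2014} that $c_{i,0}$ is pinned down by a similar integral representation applied to a further-reduced remainder. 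The only genuinely new obstacle beyond bookkeeping is the first nonlocal coefficient described above; everything else is a combination of the algebraic recursion from \cite{HanJiang2014} with the class $\mathcal A(\bar G_r)$ being closed under the relevant operations.
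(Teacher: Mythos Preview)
Your outline is right in spirit --- the local coefficients are algebraic in $c_2$ and its tangential derivatives, the higher coefficients are algebraic in $c_2$ and $c_{\overline m,0}$, so the whole theorem reduces to the analyticity of $c_{\overline m,0}$, and the ODE representation of Lemma \ref{lemma-SolutionODE} is the tool to extract it. But the formula you write for $c_{\overline m,0}$ is incomplete, and the piece you dropped is precisely the one that carries the difficulty.

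In \eqref{eq-LinearODE-Solution-0} there are three pieces. You kept only the bracketed coefficient of $t^{\overline m}$, but the last term
\[
-\frac{1}{\overline m-\underline m}\,t^{\overline m}\int_t^r s^{1-\overline m}\widetilde F(x',s)\,ds
\]
also contributes to the $t^{\overline m}$ coefficient as $t\to 0$. After subtracting the local part one has $\widetilde F(x',s)=a(x')s^{\overline m-2}+g(x',s)$ with $g=o(s^{\overline m-2})$; splitting the integral produces the $t^{\overline m}\log t$ term together with an additional constant contribution
\[
-\frac{1}{\overline m-\underline m}\int_0^r s^{1-\overline m}\,g(x',s)\,ds
\]
to $c_{\overline m,0}$. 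Here $1-\overline m\le -2$, so the convergence of this integral relies on cancellation in $g=\widetilde F-a\,s^{\overline m-2}$, and to get analyticity in $x'$ you need
\[
\big|D_{x'}^l g(x',s)\big|\le B_0B^l l!\cdot s^{\overline m-1}|\log s|
\]
or the like --- a uniform-in-$l$ decay estimate, not merely $\widetilde F\in\mathcal A(\bar G_r)$. Your appeal to ``differentiating under the integral sign with a harmless factor $\int_0^r s^{1-\underline m}\,ds$'' covers the bracketed term but says nothing about this one.

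The paper closes this gap by first applying the shift $v\mapsto v'-2v/t$ exactly $\overline m-2$ times (moving the indicial roots to $\underline m-\overline m+2$ and $2$), so that the troublesome integral becomes $\int_0^r s^{-1}\big[F_{\overline m-2}(x',s)-F_{\overline m-2}(x',0)\big]\,ds$ with $F_{\overline m-2}=\partial_t^{\overline m-2}F$. To control this they run an induction: using the representations \eqref{eq-LinearODE-Solution-l} at levels $l\le\overline m-3$ (where the last integral is still absolutely convergent and benign) they show $\partial_t^k(u/t),\,\partial_t^{k+1}u\in\mathcal A(\bar G_r)$ for $k=1,\dots,\overline m-2$, hence $F_{\overline m-2}\in\mathcal A(\bar G_r)$, and then at the critical level $l=\overline m-2$ they squeeze out the refined bound
\[
\big|D_{x'}^l\big[F_{\overline m-2}(x',s)-F_{\overline m-2}(x',0)\big]\big|\le B_0B^l l!\cdot s\log s^{-1},
\]
which makes the $s^{-1}$-integral manifestly analytic. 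This bootstrapping through intermediate normal derivatives is the substantive content you are missing; without it the argument does not close.
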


\begin{proof} The coefficients in \eqref{eq-FormalExpansion} can be divided into two groups. The 
first group consists of $c_2, \cdots, c_{\overline{m}-1}$ and $c_{\overline{m},1}$, and the second group 
consists of the rest. Any coefficients in the first group 
can be expressed in terms of $c_2$ and its derivatives as well as $A_{ij}, P, Q$, and $N$, 
and any coefficients in the second group 
can be expressed in terms of $c_2, c_{\overline{m}, 0}$ 
and their derivatives as well as $A_{ij}, P, Q$, and $N$. See \cite{HanJiang2014} for details. 
We only need to prove $c_2$ and $c_{\overline{m},0}$ are analytic in $x'$. 

We first note 
$$c_2(x')=-\frac{N}{2A_{nn}+2P+Q}\bigg|_{(x',0)}.$$ 
Hence, the analyticity of $c_2$ follows from that of $A_{nn}, P, Q$ and $N$. 
Alternatively, we note 
$$c_2(x')=\lim_{t\to0}\frac{u(x',t)}{t^2}.$$
Then, $c_2$ is analytic in $x'$ since $u/t^2\in \mathcal A(\bar G_{r})$ by \eqref{eq-TangentialAnalyticity-u}. 
It remains to prove that $c_{\overline{m},0}$ is analytic in $x'$. To this end, 
we recall an integral expression of $c_{\overline{m}, 0}$. 

We now write \eqref{gtt}  as 
\begin{align}\label{eq-LinearODE-0}
u^{\prime\prime}+p\frac{u^\prime}{t}+q\frac{u}{t^2}=F\quad\text{in }G_1, 
\end{align}
and set $p_0=p, q_0=q, u_0=u, F_0=F$, 
$\underline{m}_0=\underline{m}$ and 
$\overline{m}_0=\overline{m}$. Set, for $l\ge 1$  inductively, 
\be\label{eq-definition-u-l} u_l=u_{l-1}^\prime-\frac{2u_{l-1}}{t}.\ee 
Then, 
\begin{align}\label{eq-LinearODE-l}
u_l^{\prime\prime}+p_l\frac{u_l^\prime}{t}+q_l\frac{u_l}{t^2}=F_l\quad\text{in }G_1, 
\end{align}
where, inductively,  
$$p_l=p_{l-1}+2, \quad q_l=p_{l-1}+q_{l-1},$$
and 
$$F_l=\partial_tF_{l-1}.$$
A simple calculation yields
$$p_l=2l+p,\quad q_l=l^2+(p-1)l+q,$$
and 
$$F_l=\partial_t^lF.$$
We note that
\be\label{eq-ExpressionPQ}p_l=1+2l-(\underline{m}+\overline{m}),
\quad q_l=(\underline{m}-l)(\overline{m}-l),\ee
and the general solutions of the homogeneous linear equation 
corresponding to \eqref{eq-LinearODE-l} are spanned by 
$t^{\underline{m}-l}$ and $t^{\overline{m}-l}$.

The solution $u$ can be expressed in terms of  $u_l$. To see this, 
we first rewrite \eqref{eq-definition-u-l} as 
\begin{equation}\label{eq-Iteration_u-l}\frac{u_l}{t^2}=\partial_t\left(\frac{u_{l-1}}{t^2}\right).\end{equation}
Then, inductively, 
$$\frac{u_l}{t^2}=\partial_t^l\left(\frac{u}{t^2}\right).$$
To proceed, we set
\begin{equation}\label{eq-Definition-v}
v_l=\frac{u_{l}}{t^2}=\partial_t^l\left(\frac{u}{t^2}\right).\end{equation}
Then, $v_l=v_{l-1}^\prime$. 
If $v_0, v_1, \cdots, v_{l-1}$ are continuous in $\bar G_1$ and $v_l$ is integrable
in $G_1$, a successive integration yields 
\begin{align}\label{eq-Expression-u}\begin{split} 
u(x',t)&=v_0(x',0)t^2+v_1(x',0)t^3+\cdots+\frac{1}{(l-1)!}v_{l-1}(x',0)t^{l+1}\\
&\qquad +t^2\int_0^t\int_0^{s_1}\cdots\int_0^{s_{l-1}}v_{l}(x',s_{l})ds_{l}\cdots ds_2ds_1.
\end{split}\end{align}

For any $l\ge 0$, we now apply Lemma \ref{lemma-SolutionODE} to \eqref{eq-LinearODE-l}. 
Specifically, we replace $\underline{m}$ and $\overline{m}$ 
by $\underline{m}-l$ and $\overline{m}-l$
in \eqref{eq-LinearODE-Solution-0} and obtain 
\begin{align}\label{eq-LinearODE-Solution-l}\begin{split}
u_l(x', t)&=\left[u_l(x',r)r^{l-\overline{m}}
+\frac{r^{\underline{m}-\overline{m}}}{\overline{m}-\underline{m}}
\int_0^r s^{l+1-\underline{m}}F_l(x',s)ds\right] t^{\overline{m}-l}\\
&\qquad-\frac{1}{\overline{m}-\underline{m}} 
t^{\underline{m}-l}\int_0^t s^{l+1-\underline{m}}F_l(x',s) ds\\
&\qquad -\frac{1}{\overline{m}-\underline{m}}
t^{\overline{m}-l}\int_t^r s^{l+1-\overline{m}}F_l(x',s) ds.\end{split}
\end{align}
Now, we take  $l=\overline{m}-2$. Then, 
\begin{align*}
u_{\overline{m}-2}(x', t)&=\left[u_l(x',r)r^{-2}
+\frac{r^{\underline{m}-\overline{m}}}{\overline{m}-\underline{m}}
\int_0^r s^{\overline{m}-\underline{m}-1}F_{\overline{m}-2}(x',s)ds\right] t^{2}\\
&\qquad-\frac{1}{\overline{m}-\underline{m}} 
t^{\underline{m}-\overline{m}+2}\int_0^t s^{\overline{m}-\underline{m}-1}F_{\overline{m}-2}(x',s) ds\\
&\qquad -\frac{1}{\overline{m}-\underline{m}}
t^{2}\int_t^r s^{-1}F_{\overline{m}-2}(x',s) ds.
\end{align*}
By setting 
$$\widetilde F_{\overline m-2}(x',t)=F_{\overline m-2}(x',t)-F_{\overline m-2}(x',0),$$
we have 
\begin{align*}
u_{\overline{m}-2}(x',t)=c_{2,1}(x') t^2\log t+c_{2,0}(x') t^2+ \widetilde w_{\overline{m}-2}(x',t),\end{align*}
where 
\begin{align}\label{eq-LinearODE-u-(n-2)}\begin{split}
c_{2,1}(x')&=\frac{1}{\overline{m}-\underline{m}}F_{\overline{m}-2}(x',0),\\
c_{2,0}(x')&=u_{\overline{m}-2}(x',r)r^{-2}
+\frac{r^{\underline{m}-\overline{m}}}{\overline{m}-\underline{m}}
\int_0^r s^{\overline{m}-\underline{m}-1}F_{\overline{m}-2}(x',s)ds\\
&\qquad-\frac{1}{(\overline{m}-\underline{m})^2}F_{\overline{m}-2}(x',0)
-\frac{\log r}{\overline{m}-\underline{m}}F_{\overline{m}-2}(x',0)\\
&\qquad-\frac{1}{\overline{m}-\underline{m}}
\int_0^r s^{-1}\widetilde F_{\overline{m}-2}(x',s)ds,
\end{split}\end{align}
and $\widetilde w_{\overline{m}-2}$ is the higher order term, which does not play any role in the 
present proof. 
Then, $c_{\overline{m},0}$ in \eqref{eq-FormalExpansion} 
is a linear combination of 
$c_{2,0}, c_{2,1}$ in \eqref{eq-LinearODE-u-(n-2)}. See \cite{HanJiang2014}, Lemma 5.2 in particular, for details. 
In the following, we will prove 
that $c_{2,0}, c_{2,1}$ in \eqref{eq-LinearODE-u-(n-2)} are analytic in $\bar B'_{r}$, for any $r\in (0,1)$. 

We first prove by induction, for $k=1, \cdots, \overline{m}-2$,  
\begin{equation}\label{eq-LinearAnalyticity-Induction}
\partial^k_t\left(\frac ut\right),\, \partial_t^{k+1}u\in \mathcal A(\bar G_{r}),\end{equation}
and 
\begin{equation}\label{eq-LinearAnalyticity-Induction2}
\partial^k_t\left(\frac {u^2}{t^3}\right),\, 
\partial^k_t\left(\frac {uu_t}{t^2}\right),\, 
\partial^k_t\left(\frac {u_t^2}{t}\right) \in \mathcal A(\bar G_{r}).\end{equation}
First, Corollary \ref{cor-Linear-TangentialAnalyticity} implies \eqref{eq-LinearAnalyticity-Induction}
and \eqref{eq-LinearAnalyticity-Induction2}
for $k=1$. We assume \eqref{eq-LinearAnalyticity-Induction} 
and \eqref{eq-LinearAnalyticity-Induction2} hold for $k=1, \cdots, l$, for some 
$l\le \overline{m}-3$. Since $F$ is analytic in $x'$, $t$ and quantities in 
\eqref{gtt-Coefficients}, 
then $F_l=\partial_t^lF\in \mathcal A(\bar G_{r})$. 
By \eqref{eq-LinearODE-Solution-l}, we get
$$\frac{u_l}{t^2},\, \frac{u_l'}{t},\, u_l''\in \mathcal A(\bar G_{r}).$$
It is essential here to assume $l\le\overline{m}-3$, because of the last integral in \eqref{eq-LinearODE-Solution-l}. 
With $v_l$ given by \eqref{eq-Definition-v}, we have 
$$v_l, \, tv_l', \, t^2v_l''\in \mathcal A(\bar G_{r}).$$
By \eqref{eq-Expression-u}, we have 
$$u(x',t)=v_0(x',0)t^2+v_1(x',0)t^3+\cdots+\frac{1}{(l-1)!}v_{l-1}(x',0)t^{l+1}+t^2R_l(x',t),$$
where
$$R_l(x',t)=\int_0^t\int_0^{s_1}\cdots\int_0^{s_{l-1}}v_{l}(x',s_{l})ds_{l}ds_{l-1}\cdots ds_1.$$
Note 
$$
\aligned\partial_t^{l+2}u&=t^2\partial_t^{l+2}R_l
+2(l+2)t\partial_t^{l+1}R_l+(l+2)(l+1)\partial_t^{l}R_l\\
&=t^2v_{l}^{\prime\prime}
+2(l+2)tv_{l}^\prime+(l+2)(l+1)v_{l}.
\endaligned$$
Then,  
$\partial_t^{l+2}u\in \mathcal A(\bar G_{r})$. 
Similarly, we have $\partial_t^{l+1}(u/t)\in \mathcal A(\bar G_{r})$. 
This is \eqref{eq-LinearAnalyticity-Induction} for $k=l+1$. We can also 
conclude \eqref{eq-LinearAnalyticity-Induction2} for $k=l+1$. 

By taking $k=\overline{m}-2$ in \eqref{eq-LinearAnalyticity-Induction}
and \eqref{eq-LinearAnalyticity-Induction2}, we conclude $F_{\overline{m}-2}\in \mathcal A(\bar G_{r})$. 
In particular, $F_{\overline{m}-2}(\cdot, 0)$ is analytic in $\bar B'_{r}$, and hence
$c_{2,1}$  in \eqref{eq-LinearODE-u-(n-2)} is analytic in $\bar B'_{r}$.

We now proceed to prove that $c_{2,0}$  in \eqref{eq-LinearODE-u-(n-2)} is analytic in $\bar B'_{r}$. 
By the expression of $c_{2,0}$ in \eqref{eq-LinearODE-u-(n-2)}, it suffices to prove 
that 
\begin{equation}\label{eq-LinearAnalycity-Difference}
\int_0^r s^{-1}\big[F_{\overline{m}-2}(x',s)-F_{\overline{m}-2}(x',0)\big]ds\end{equation}
is an analytic function of $x'$ in $B_{r}'$. 
By \eqref{eq-Definition-v} and  \eqref{eq-LinearODE-Solution-l} for $l=\overline{m}-2$, 
we have 
\begin{align*}v_{\overline{m}-2}(x', t)&=\left[u_{\overline{m}-2}(x',r)r^{-2}
+\frac{r^{\underline{m}-\overline{m}}}{\overline{m}-\underline{m}}
\int_0^r s^{\overline{m}-\underline{m}-1}F_{\overline{m}-2}(x',s)ds\right] \\
&\qquad-\frac{1}{\overline{m}-\underline{m}} 
t^{\underline{m}-\overline{m}}\int_0^t s^{\overline{m}-\underline{m}-1}F_{\overline{m}-2}(x',s) ds\\
&\qquad -\frac{1}{\overline{m}-\underline{m}}
\int_t^r s^{-1}F_{\overline{m}-2}(x',s) ds.\end{align*}
We note that the dominant term is the last integral. By the simple integral 
$\int_t^rs^{-1}ds=\log r-\log t$, we obtain, for any nonnegative integer $l$ and any 
$(x',t)\in \bar G_r$, 
$$|D_{x'}^lv_{\overline{m}-2}(x', t)|\le B_0B^ll!\log t^{-1},$$
for some positive constants $B_0$ and $B$. Moreover, a straightforward 
calculation yields 
$$|t^2D_{x'}^lv'_{\overline{m}-2}(x', t)|
+|tD_{x'}^lv_{\overline{m}-2}(x', t)|
+\left|\int_0^tD_{x'}^lv_{\overline{m}-2}(x', s)ds\right|\le B_0B^ll!t\log t^{-1}.$$
By \eqref{eq-Expression-u} with $l=\overline{m}-2$, we have 
$$u(x',t)=P_{\overline{m}-1}(x',t)+t^2R_{\overline{m}}(x', t),$$
where 
$$\aligned 
P_{\overline{m}-1}(x', t)&=v_0(x',0)t^2+v_1(x',0)t^3+\cdots
+\frac{1}{(\overline{m}-3)!}v_{\overline{m}-3}(x',0)t^{\overline{m}-1},\\
R_{\overline{m}}(x', t)&=\int_0^t\int_0^{s_1}\cdots\int_0^{s_{\overline{m}-3}}
v_{\overline{m}-2}(x',s_{\overline{m}-2})ds_{\overline{m}-2}\cdots ds_2ds_1.\endaligned$$
Then, 
$$\partial_t^{\overline{m}-1}u=\partial_t^{\overline{m}-1}P_{\overline{m}-1}
+t^2v_{\overline{m}-2}'+2(m-1)tv_{\overline{m}-2}
+(\overline{m}-1)(\overline{m}-2)\int_0^tv_{\overline{m}-2}(x',s)ds.$$
Note that $\partial_t^{\overline{m}-1}P_{\overline{m}-1}$ is a function of $x'$. Then, 
for any nonnegative integer $l$ and any 
$(x',t)\in \bar G_r$, 
$$\big|D^l_{x'}\big[\partial_t^{\overline{m}-1}u(x',t)-\partial_t^{\overline{m}-1}u(x',0)\big]\big|
\le B_0B^ll!t\log t^{-1}.$$
Similar estimates hold for $\partial_t^{\overline{m}-2}(u/t)$, $\partial_t^{\overline{m}-2}(u^2/t^3)$, 
$\partial_t^{\overline{m}-2}(uu_t/t^2)$ and $\partial_t^{\overline{m}-2}(u_t^2/t)$. 
Then, the expression of $F_{\overline{m}-2}$ implies 
$$\big|D^l_{x'}\big[F_{\overline{m}-2}(x',t)-F_{\overline{m}-2}(x',0)\big]\big|
\le B_0B^ll!t\log t^{-1}.$$
Therefore, 
\begin{align*}
&\, \left|D^l_{x'} \int_0^r s^{-1}\big[F_{\overline{m}-2}(x',s)-F_{\overline{m}-2}(x',0)\big] ds\right|\\
\le&\,\int_0^r s^{-1}\big|D^l_{x'} \big[F_{\overline{m}-2}(x',s)-F_{\overline{m}-2}(x',0)\big]\big| ds\\
\leq&\,  \int_0^r s^{-1}B_0B^l l! s \log s^{-1}  ds = \widehat B_0B^l l!. 
\end{align*}
This implies that the function in \eqref{eq-LinearAnalycity-Difference} is analytic in $B_\rho'$ for some 
$\rho<1/2$. 
\end{proof}

\section{The Analyticity along the Normal direction}
\label{Sec-NormalAnalyticity}

In this section, we prove the convergence of our boundary expansion. We 
adapt the methods by  Nirenberg \cite{Nirenberg1972}, 
by Kichenassamy and Littman \cite{KL:1}, \cite{KL:2}, and by Kichenassamy
\cite{Kichenassamy2004Adv}. We adopt the norm used by Nirenberg. 

We consider the equation \eqref{gtt}, with $F$ a function of 
$x'$, $t$ and those in 
\eqref{gtt-Coefficients}. We will prove that under appropriate assumptions, solutions 
are analytic in $x'$, $t$ and $t\log t$.  The main result in this paper is the following theorem. 

\begin{theorem}\label{Thm-MainThm}
Let $F$ in \eqref{gtt} be analytic in all of its arguments given by $x'$, $t$ and those in 
\eqref{gtt-Coefficients} and  let
$\underline{m}$ and $\overline{m}$ be integers satisfying 
\eqref{eq-Assumption_m1} and \eqref{eq-Assumption_m2}.
Suppose that $u\in C^1(\bar G_1)\cap C^{\infty}(G_1)$ is a solution of \eqref{gtt} satisfying 
\eqref{eq-TangentialAnalyticity-u}. 
Then, $u$ is analytic in $x'$, $t$ and $t\log t$, and the series in \eqref{eq-FormalExpansion} 
converges uniformly to $u$ in $G_{1/2}$. 
\end{theorem}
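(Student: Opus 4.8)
The plan is to reduce the PDE \eqref{gtt} to a family of second-order ODEs in $t$ (one for each tangential Taylor/Fourier mode, or rather for the full function-valued unknown), and to solve these ODEs by the integral formula of Lemma \ref{lemma-SolutionODE}, setting up a fixed-point scheme in a scale of Banach spaces adapted to the variables $x'$, $t$ and $t\log t$. Concretely, I would introduce the change of unknown that separates the local polynomial part $\sum_{i=2}^{\overline m-1}c_i(x')t^i$ (whose coefficients are already known to be analytic by Theorem \ref{lemma-LinearAnalyticity-FirstNonlocal}) together with the first nonlocal term $c_{\overline m,0}(x')t^{\overline m}$ (analytic by the same theorem), and write $u = \Phi + w$, where $\Phi$ collects these terms and $w$ is the higher-order remainder, which should lie in the closure of the span of monomials $t^i(\log t)^j$ with $i\ge \overline m$, $j\le N_i$. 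The equation for $w$ is again of the form \eqref{gtt} with a new right-hand side $\tilde F$ that vanishes to sufficiently high order in $t$, is analytic in its arguments, and depends on $w$ through $w/t, w_t, D_{x'}w/t, D_{x'}w_t, D^2_{x'}w$ and the quadratic expressions.

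The core step is the contraction argument. Following Nirenberg \cite{Nirenberg1972} and Kichenassamy--Littman \cite{KL:1}, \cite{KL:2}, I would fix the norm Nirenberg used on analytic functions — a majorant-type norm on a shrinking family of polydiscs in $x'$ with an auxiliary radius parameter $\sigma$ — and upgrade it to handle the extra variable $\tau = t\log t$ by treating $(t,\tau)$ as two independent analytic variables subject to the nilpotent-type relations that the iteration preserves. The map $w \mapsto T w$ is defined by solving \eqref{eq-LinearODE-0} with right-hand side $\tilde F(x',t,w,\ldots)$ via formula \eqref{eq-LinearODE-Solution-0}; the key is that, because $\tilde F$ vanishes to order $>\overline m$ in $t$ and $\underline m\le 0<\overline m\le$ the vanishing order, each of the three integrals in \eqref{eq-LinearODE-Solution-0} is convergent and \emph{gains} a power of $t$ (equivalently, the operator is bounded with a small norm on the subspace of high-order terms when the radius in $t$ is taken small). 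This smalling, combined with the analyticity estimates \eqref{eq-QuasiLinearAnalyticity-Assum1}--\eqref{eq-QuasiLinearAnalyticity-Assum2} on $F$ and the tangential analyticity bounds \eqref{eq-TangentialAnalyticity-u} from Corollary \ref{cor-Linear-TangentialAnalyticity}, lets me verify that $T$ maps a ball of the scaled Banach space into itself and is a contraction there, for $|x'|$ and $t$ small; its fixed point is the analytic (in $x'$, $t$, $t\log t$) solution, and by the uniqueness in \eqref{eq-TangentialAnalyticity-u}-class solutions of \eqref{gtt} with the prescribed low-order data, it coincides with $u$. Uniform convergence of \eqref{eq-FormalExpansion} to $u$ in $G_{1/2}$ then follows because the fixed point is exactly the sum of that series, and the contraction gives geometric control of the tail; a final covering argument over $|x_0'|<1/2$ (rescaling so that the small-radius statement applies near each boundary point, exactly as in the passage from Theorem \ref{thrm-TangentialAnalyticity} to Corollary \ref{cor-Linear-TangentialAnalyticity}) upgrades the local statement to all of $G_{1/2}$.

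The main obstacle I expect is bookkeeping the $\log t$ factors through the iteration: one must show that the space of "finite sums $\sum c_{i,j}(x')\,t^i(\log t)^j$ with analytic $c_{i,j}$ and the combinatorially correct range $j\le N_i$" is preserved by $T$, i.e. that substituting such a $w$ into $\tilde F$ and then applying the integral operator \eqref{eq-LinearODE-Solution-0} does not create logarithms faster than the allowed rate $N_i\sim [i/n]$, and that the norms of $t^i(\log t)^j$ can be summed. The resonance between $\overline m$ and the vanishing order of $\tilde F$ is precisely what produces the borderline $t^{\overline m}\log t$ term, so the decomposition $u=\Phi+w$ must be arranged so that this resonant term is already absorbed into $\Phi$ and does not recur; handling this cleanly — rather than the elliptic estimates, which are routine — is where the argument needs care. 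The remaining steps (checking ellipticity is irrelevant since $t>0$ makes the equation non-degenerate in the interior, verifying the hypotheses of Lemma \ref{lemma-SolutionODE} via \eqref{eq-QuasiCondition_1}, and the final rescaling) are straightforward.
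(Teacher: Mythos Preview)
Your proposal is essentially correct and follows the same strategy as the paper: subtract the initial expansion (the paper calls it $v_{\overline m}$, taken through the $t^{\overline m}$ and $t^{\overline m+1}(\log t)^j$ terms), iterate via the integral formula of Lemma~\ref{lemma-SolutionODE}, control the iterates in Nirenberg-type norms on shrinking polydiscs, and then treat $T=t$ and $S=t\log t$ as independent complex variables to obtain analyticity in all three arguments. Two organizational differences are worth noting.

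First, the paper separates the argument into two passes. Step~1 runs the iteration with $x'$ complexified but $t$ real, and its purpose is to prove \emph{uniqueness}: the constructed limit $v$ is shown to equal the given solution $u$ by applying the same iteration bounds to $u-v$. Only in Step~2 does the paper re-run the iteration with $(T,S)$ complexified. You propose to do both at once, which is fine in principle, but you still need the uniqueness statement you invoke (``by the uniqueness\ldots it coincides with~$u$''); the paper supplies it, and it is not automatic.

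Second, your worry about ``bookkeeping the $\log t$ factors'' and preserving the combinatorial bound $j\le N_i$ is not how the paper proceeds. Once $T$ and $S$ are independent complex variables, there is no $\log$ to track at all: the integral operator is rewritten, via the substitution $s=\rho T$, as $\int_0^1(\cdots)\,d\rho$ with the integrand evaluated at $(\rho T,\,\rho(\log\rho)T+\rho S)$, and one just proves holomorphy and estimates in $(x',T,S)$. The allowed growth of $N_i$ falls out a posteriori from the Taylor expansion of the holomorphic limit. Finally, bear in mind that because $V$ contains $D_{x'}^2 w$, the map loses one derivative per iteration, so a literal contraction on a fixed space is not available; the paper's explicit Picard iteration on a nested sequence of domains $\Omega_k$ (with radii $a_k=a_{k-1}(1-k^{-2})$) is exactly the Ovsyannikov--Nirenberg mechanism you allude to with ``shrinking family of polydiscs,'' and your ``contraction'' should be read in that sense.
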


We first present a brief description of the proof.  For $k\ge \overline{m}$, set 
$$u_k(x', t)=\sum_{i=2}^{\overline{m}-1}c_i(x') t^i
+\sum_{i=\overline{m}}^k \sum_{j=0}^{N_i} c_{i, j}(x')t^{i}(\log t)^j.$$
Then, $u_k$ can be considered as a partial sum of the series \eqref{eq-FormalExpansion}.
Our goal is to prove that $u$ is analytic in $x'$, $t$ and $t\log t$ and that $u_k$ converges to $u$ 
uniformly. We can write $u_k$ as 
$$u_k(x', t)=\sum_{i=2}^{\overline{m}-1}c_i(x') t^i
+\sum_{i=\overline{m}}^k \sum_{j=0}^{N_i} c_{i, j}(x')t^{i-j}(t\log t)^j.$$
In other words, $u_k$ is the expansion of $u$ with respect to $t$ and $t\log t$ up to order $k$. 
In the proof below, we will construct another sequence $\{v_k\}$, analytic in $x'$, $t$ and $t\log t$ 
and converging uniformly to $v$, and we will prove that $u=v$. The first element $v_{\overline{m}}$ is 
given by $u_{\overline{m}+1}$ without the $t^{\overline{m}+1}$-term.

\begin{proof} 
The proof is quite long and is divided into two steps after the initial setup. 

For any function $v=v(x',t)$, we set 
\begin{equation}\label{eq-components}V=\bigg(\frac{v}{t}, v', \frac{D_{x'} v}{t}, D_{x'}v', 
D^2_{x'}v, \frac{v^2}{t^3},\frac{vv'}{t^2}, \frac{v'^2}{t}\bigg),\end{equation}
and write 
$$F=F(x',t,V).$$
We assume that there exist constants $M>0$ and $R\in (0,1)$, such that, for any 
$(x',t,V)$ with $|x'|+t+|V|<R$, 
$$|F(x',t,V)|\leq {M}\bigg[1-\frac{1}{R}\big(|x'|+t+|V|\big)\bigg]^{-1}.$$ 
We set 
$$L_0v=v^{\prime\prime}
+p\frac{v^{\prime}}{t}+q\frac{v}{t^2}.$$

{\it Step 1. We prove that \eqref{gtt} admits a unique solution  satisfying 
\eqref{eq-TangentialAnalyticity-u}.} Let $u$ be given as in the statement of Theorem \ref{Thm-MainThm}. 
Then, $u$ is analytic in $x'$ by Corollary \ref{cor-Linear-TangentialAnalyticity}. 
We extend arguments $x^\prime$ of all functions to the complex field. 
Set
\begin{align}\label{eq-vn}\begin{split}
{v}_{\overline{m}}(x',t)&=\sum_{i=2}^{\overline{m}-1}c_i(x^\prime) t^i
+c_{\overline{m},1}(x^\prime)t^{\overline m} \log t+c_{\overline{m}}(x^\prime) t^{\overline{m}}\\
&\qquad+\sum_{j=1}^{N_{\overline{m}+1}}c_{\overline{m}+1,j}(x^\prime)t^{\overline{m}+1} (\log t)^j,
\end{split}\end{align}
where $c_i$ and $c_{i,j}$ are as in \eqref{eq-FormalExpansion}. 
Note that ${v}_{\overline{m}}$ is holomorphic in $x'$ and 
is the expansion of $u$ before the term $t^{\overline{m}+1}$, i.e., 
$$|u-v_{\overline{m}}|\le Ct^{\overline{m}+1}.$$ 
Inductively, for $k\geq \overline{m}+1$, we define $w_k$ and $v_k$ by 
\begin{align}\label{eq-gk}
w_{k}= \frac{1}{{\overline{m}}-\underline{m}}t^{\underline{m}}\int_0^t \rho^{1-\underline{m}} {F}_k
d\rho- \frac{1}{{\overline{m}}-\underline{m}}t^{{\overline{m}}}\int_0^t \rho^{1-{\overline{m}}} {F}_k d\rho,\end{align}
and 
$${v}_k={v}_{k-1}+w_k={v}_{\overline{m}}
+\sum_{i={\overline{m}}+1}^k w_i,$$
where
\begin{align*}
{F}_{{\overline{m}}+1}=
F({x^\prime},t,V_{\overline{m}})
-L_0{v}_{\overline{m}},
\end{align*}
and, for $k\geq \overline{m}+2$,
\begin{align*}
{F}_k= F(x^\prime,t,V_{k-1})
-F(x^\prime,t,V_{k-2}).\end{align*}
Here, 
$$V_k=\bigg(\frac{v_k}{t}, v_k', \frac{D_{x'} v_k}{t}, D_{x'}v_k', 
D^2_{x'}v_k, \frac{v_k^2}{t^3},\frac{v_kv_k'}{t^2}, \frac{v_k'^2}{t}\bigg).$$
It is easy to see that  $w_k$ is a solution of the equation $L_0w_k=F_k$. 
Hence, 
\begin{align}\label{Eq-Series}
L_0v_k=
F({x^\prime},t,V_{k-1}).
\end{align}

In the following, we will prove that 
${v}_k$ is holomorphic in $x^\prime$ and converges uniformly in a fixed region $|x'|+t<r$. To this end, we 
need to introduce appropriate domains and norms. 

Let $w=w(x')$ be a holomorphic function in $B_1'\subset\mathbb C^{n-1}$. Define, 
for any $r\in ( 0,1)$, 
$$\|w\|_r=\sup_{|x'|<r}|w(x')|.$$
By the usual estimate for derivatives of holomorphic functions, we have, 
for any $0< r'<r$ and any $\alpha=1, \cdots, n-1$,
\begin{equation}\label{eq-EstimateHolomorphicDerivative}
\|\partial_{\alpha}w\|_{r'}\le \frac{1}{r-r'}\|w\|_r.\end{equation}
This estimate will be used repeatedly in the following. 

With $a_0$ a positive constant to be determined later, define inductively, for any $k\ge 0$,  
\begin{equation}\label{eq-induction-a}a_{k+1}=a_k(1-(k+1)^{-2}).\end{equation}
Then, 
$$a\equiv \lim_{k\rightarrow \infty} a_k=a_0 \prod_{k=0}^\infty (1-(k+1)^{-2})>0.$$ 
For some fixed $s_0>0$, set 
$$\Omega_k=\left\{(x',t):\, |x'|+\frac{t}{a_k}<s_0, \, t>0\right\}.$$
For any function $w(x^\prime,t)$ defined in $\Omega_k$,  holomorphic 
in $x'$ and continuous in $t$, we write $w(t)=w(\cdot, t)$ and define 
\begin{align*}
M_k[w]=\sup_{\substack{0< s <s_0\\0<t<a_k (s_0-s)}}
\bigg[\frac{\|w(t)\|_s}{t^{\overline{m}-1}} \bigg(\frac{a_k(s_0-s)}{t}-1\bigg)\bigg].\end{align*}
We also define 
$$\Omega_\infty=\left\{(x',t):\, |x'|+\frac{t}{a}<s_0, \, t>0\right\},$$
and 
\begin{align*}
M_\infty[w]=\sup_{\substack{0< s <s_0\\0<t<a (s_0-s)}}
\bigg[\frac{\|w(t)\|_s}{t^{\overline{m}-1}} \bigg(\frac{a(s_0-s)}{t}-1\bigg)\bigg].\end{align*}

Inductively, we will prove that there exist positive constants $A$ and $s_0$ such that, 
for  any $k\ge \overline{m}+1$, any $0<s<s_0$ and $0<t<a_k (s_0-s)$, 
\begin{align}\label{eq-upbound}
&\|V_{k-1}(t)\|_s \leq \frac{R}{4},\\
\label{eq-holomorphic}&{w_k}\text{ is holomorphic in $x^\prime$ in $\Omega_k$},\\
\label{eq-normbound}
&M_k \big[\frac{w_k}{t}\big],\, M_k [w_k^\prime], \,M_k [D_{x^\prime} w_{k}]\leq \frac{A}{2^k}.
\end{align}

For $k=\overline{m}+1$, 
we can set $A$ large and $s_0$ small such that \eqref{eq-upbound}, \eqref{eq-holomorphic}
and \eqref{eq-normbound} hold. We can set $s_0^{{\overline{m}}-2} A$ small for later purposes.
For \eqref{eq-upbound} with $k=\overline{m}+1$, we require a stronger estimate 
$$\|V_{\overline{m}}(t)\|_s \leq \frac{R}{64}.$$
We assume that \eqref{eq-upbound}, \eqref{eq-holomorphic}
and \eqref{eq-normbound} hold for $k-1$, for some $k\ge \overline{m}+2$, and proceed to consider for $k$.


First, we prove \eqref{eq-upbound}. There are eight components in $V_{k-1}$.  
We consider $D^2_{x^\prime} v_{k-1}$ for an illustration. For each $i=\overline{m}+1, \cdots, k-1$,  $w_{i}$
is holomorphic in $x'$ for any $(x',t)\in\Omega_{i}$. 
Hence, we can apply \eqref{eq-EstimateHolomorphicDerivative}
to each $w_i$. 
For any $0<s<s_0$ and $0<t<a_{k}(s_0-s)$, we set 
$$\tau_i=\frac{1}{2}\big(s_0+s-\frac{t}{a_i}\big).$$
Then, 
\begin{align*}
\|D^2_{x^\prime} w_{i}(t)\|_s&\le \frac{\|D_{x^\prime} w_{i}(t)\|_{\tau_{i}}}{\tau_{i}-s}
\le  \frac{t^{{\overline{m}}-1}M_{i}[D_{x^\prime}w_{i}]}{(\tau_{i}-s)(a_{i}(s_0-\tau_{i})/t-1)}\\
&\leq  \frac{4a_it^{{\overline{m}}-1}}{t(a_{i}(s_0-s)/t-1)^2}M_{i}[D_{x^\prime}w_{i}]\\
&\leq 4a_0 t^{{\overline{m}}-2} A\big(\frac{a_{i}}{a_{i+1}}-1\big)^{-2} 2^{-i},\end{align*}
and hence
\begin{align*}
\|D^2_{x^\prime} v_{k-1}(t)\|_s &\leq \|D^2_{x^\prime} v_{{\overline{m}}}(t)\|_s
+\sum_{i={\overline{m}}+1}^{k-1}  ||D^2_{x^\prime} w_{i}(t)||_s\\
&\leq  \frac{R}{64}+Ca_0 s_0^{{\overline{m}}-2}A\sum_{
i={\overline{m}}+1}^{k} \big(\frac{a_{i}}{a_{i+1}}-1\big)^{-2}2^{-i}\leq \frac{R}{32},
\end{align*}
since $s_0^{{\overline{m}}-2} A$ is small. We can discuss other components in $V_{k-1}$ similarly. 

Second, we prove \eqref{eq-holomorphic}. 
Recall that $w_k$ is given by  \eqref{eq-gk}. Note that 
\begin{equation}\label{eq-Fkmvt}
F_k=\int_0^1D_VF(x',t,\theta V_{k-1}+(1-\theta)V_{k-2})d\theta\cdot (V_{k-1}-V_{k-2}),\end{equation}
where, for $s_0<R/4$, 
\begin{align*}
|D_VF|
\leq {M}\bigg[1- \frac1R\big(s_0+ a_0 s_0+\frac{R}{4}\big)\bigg]^{-1}
\leq C,
\end{align*}
and
\begin{equation}\label{eq-components-W}\begin{split}
V_{k-1}-V_{k-2}&=\bigg(\frac{w_{k-1}}{t}, w_{k-1}', \frac{D_{x'} w_{k-1}}{t}, D_{x'}w_{k-1}', 
D^2_{x'}w_{k-1}, \\
&\qquad\frac{v_{k-1}^2}{t^3}-\frac{v_{k-2}^2}{t^3},\frac{v_{k-1}v_{k-1}'}{t^2}-\frac{v_{k-2}v_{k-2}'}{t^2}, 
\frac{v_{k-1}'^2}{t}-\frac{v_{k-2}'^2}{t}\bigg).\end{split}\end{equation}
For the last three components, we have
\begin{align*}
\frac{v_{k-1}v^\prime_{k-1}}{t^2}-\frac{v_{k-2}v^\prime_{k-2}}{t^2}
=\frac{v_{k-1}}{t^2}w^\prime_{k-1}+ \frac{v_{k-2}^\prime}{t}\frac{w_{k-1}}{t},
\end{align*}
and similar identities for the other two components. 
Note that ${v_{k-1}}/{t^2}, {v_{k-2}^\prime}/{t}$ are bounded. 
So, we need to prove $t^{-{\overline{m}}} w_{k-1}$ and $t^{1-{\overline{m}}}{w_{k-1}^\prime}$ 
are holomorphic in $x^\prime$ in $\Omega_{k}$. 
For any $0<s<s_0$ and $0<t<a_k(s_0-s)$, we take 
$$\tau=\frac{1}{2}\big(s_0+s-\frac{t}{a_{k-1}}\big),$$ 
and get
\begin{align*}
t^{-{\overline{m}}} \|D_{x^\prime}w_{k-1}(t)\|_s &\leq Ct^{-{\overline{m}}} 
\frac{\|w_{k-1}(t)\|_\tau}{\tau-s}
\leq C\frac{4a_{k-1} M_{k-1}[\frac{w_{k-1}}{t}]}{t(a_{k-1}(s_0-s)/t-1)^2}\\
&\leq C\frac{4a_0  2^{-k+1}A t}{(a_{k-1}-a_k)^2(s_0-s)^2}.
\end{align*}
We can discuss other terms similarly. 

Last, we prove \eqref{eq-normbound}.  For any $0<s<s_0$ and $0<t<a_k(s_0-s)$, we have 
\begin{align*}
\frac{\|w_k(t)\|_s}{t^{\overline{m}}}&\leq 
\frac{1}{{\overline{m}}-\underline{m}}t^{\underline{m}-{\overline{m}}}
\int_0^t \rho^{1-\underline{m}} \|{F}_k\|_s
d\rho+\frac{1}{{\overline{m}}-\underline{m}}\int_0^t \rho^{1-{\overline{m}}} \|{F}_k\|_s  d\rho.\end{align*}
By \eqref{eq-Fkmvt}, 
$$|F_k|\le C|V_{k-1}-V_{k-2}|.$$
The integrals split to several parts. 
We first consider $w_{k-1}/t$. By 
\begin{align*}
\rho^{1-{\overline{m}}} \frac{\|w_{k-1}(\rho)\|_s}{\rho} 
\leq  \frac{1}{a_{k-1}(s_0-s))/ \rho-1}M_{k-1}\big[\frac{w_{k-1}}{t}\big],
\end{align*}
we have
\begin{align*}
\int_0^t \rho^{1-{\overline{m}}} \frac{\|w_{k-1}(\rho)\|_s}{\rho} 
d\rho 
&\leq  \frac{t}{a_{k-1}(s_0-s)/t-1}M_{k-1}\big[\frac{w_{k-1}}{t}\big]\\
&\leq \frac{a_k s_0 }{a_k(s_0-s)/t-1}M_{k-1}\big[\frac{w_{k-1}}{t}\big].
\end{align*}
We can discuss $w^\prime_{k-1}$ similarly.
Next, we consider $D_{x'}^2w_{k-1}$. 
For each $\rho\in (0,t)$, we take $s(\rho)<s_0-\frac{\rho}{a_{k-1}}$ to be fixed. Then, 
\begin{align}\label{eq-estD2g}\begin{split}
\rho^{1-\overline{m}} \|D^2_{x^\prime}w_{k-1}(\rho)\|_{s} & \leq 
\rho^{1-\overline{m}} \frac{\|D_{x^\prime}w_{k-1}(\rho)\|_{s(\rho)}}{s(\rho)-s} \\
&\leq 
\frac{M_{k-1}[D_{x^\prime}w_{k-1}]}{(s(\rho)-s)(a_{k-1}(s_0-s(\rho))/\rho-1)}.
\end{split}\end{align}
By taking 
$$s(\rho)=\frac{1}{2}\big(s_0+s-\frac{\rho}{a_{k-1}}\big),$$ 
we have
\begin{align*}
\int_0^t \rho^{1-\overline{m}} \|D^2_{x^\prime}w_{k-1}(\rho)\|_{s} d\rho
\leq \frac{Ca_k}{a_{k-1}(s_0-s)/t-1}M_{k-1}[D_{x^\prime}w_{k-1}].
\end{align*}
Similar estimates hold for other integrals. By requiring $Ca_0$ to be small, we have
\begin{align*}
M_k\big[\frac{w_k}{t}\big]\leq \frac{1}{6}(M_{k-1}[w_{k-1}]+M_{k-1}[w_{k-1}^\prime]+M_{k-1}[
D_{x^\prime}w_{k-1}]).
\end{align*}
Similar estimates hold for $w_{k}^\prime$. Hence, 
\begin{align*}
M_{k}\big[\frac{w_{k}}{t}\big],\, M_k[w_k^\prime] \leq\frac{A}{2^{k}}.
\end{align*}
Next, we note
\begin{align*}
D_{x^\prime} w_{k}(t) =\int_0^t D_{x^\prime} w_{k}^\prime(\rho) d\rho.\end{align*}
For $t<a_{k}(1-s)$, we have, similarly as in \eqref{eq-estD2g},
\begin{align*}
M_{k}[D_{x^\prime} w_{k}] \leq \frac{1}{2}M_k[w_{k}^\prime]\leq \frac{A}{2^{k}}.
\end{align*}
We hence have \eqref{eq-normbound}.

In conclusion, by \eqref{eq-normbound}, we have, for any $k\ge \overline{m}+1$, 
$$M_\infty \big[\frac{w_k}{t}\big],\, M_\infty [w_k^\prime], \,M_\infty [D_{x^\prime} w_{k}]\leq \frac{A}{2^k}.$$
There exists a  function $v$, holomorphic in $x'$ for any $(x',t)\in \Omega_\infty$, such that 
$${v}_k\to v,\quad v_k'\to v', \quad D_{x'}v_k\to D_{x'}v\quad\text{in }\Omega_\infty\text{ as }k\to\infty.$$ 
We also have, for $|x^\prime|< s$ and $t< a(s_0-s)$,
\begin{align*}
\bigg|t^{1-{\overline{m}}}\big(\frac{a(s_0-s)}{t}-1\big)\frac{{v}-{v}_{\overline{m}}}{t}\bigg|
\leq\sum_{k={\overline{m}}+1}^\infty M_k\big[\frac{w_k}{t}\big]\leq A,
\end{align*}
and 
\begin{align*}
\bigg|t^{1-{\overline{m}}}\big(\frac{a(s_0-s)}{t}-1\big)D_{x^\prime}({v}-{v}_{\overline{m}})\bigg|
\leq\sum_{k={\overline{m}}+1}^\infty M_k[D_{x^\prime}{w_k}]\leq A.
\end{align*}
Hence,  
$v$ satisfies 
\begin{align}\label{Eq-Series1}
L_0v=
F({x^\prime},t,V),
\end{align}
and 
\begin{equation}\label{Eq-Condition1}|{v}-{v}_{\overline{m}}|\le Ct^{\overline{m}+1}.\end{equation}
Moreover, 
$t^{-\overline{m}}(v-{v}_{\overline{m}})$ is holomorphic in $x'$ in $\Omega_\infty$. 
 
Last, we prove $u=v$. Note that  $u$ also satisfies \eqref{Eq-Series1} 
and \eqref{Eq-Condition1}, with $v$ and $V$ replaced by $u$ and $U$, respectively. 
Set $w= u-v$. Then,  
$$L_0w=F(x',t,U)-F(x',t,V)\quad\text{in }\Omega_\infty,$$ 
and 
$$|w|\le Ct^{\overline{m}+1}.$$ 
We can repeat the above iteration and have $M_k[u-v]=M_k[w_k]\to 0$, as $k\to \infty$. 
This implies $u=v$. 

\smallskip

{\it Step 2. We prove that $u$ is analytic in $t, t\log t$.}
We treat $t$ and $t\log t$ as two independent variables and set
$$T=t,\quad S=t\log t.$$ 
For a function $u=u(x', t, t\log t)=u(x', T, S)$, we have 
$$t\partial_tu=t\partial_Tu+t(\log t+1)\partial_Su=T\partial_Tu+(T+S)\partial_Su.$$
Set 
$$\Lambda=T\partial_T+(T+S)\partial_S.$$
Then, 
$$t\partial_tu=\Lambda u.$$
Next, we extend arguments $x', T, S$ into the complex field. 
Since the complexified $t$ or $T$ cannot be the upper bounds in the integral,
we need to make a change of variables so that $t$ or $T$ appears in the integrands. 
A simple substitution yields 
$$\int_0^tu(x', s, s\log s)ds=\int_0^1 Tu(x', \rho T, \rho(\log \rho)T+\rho S)d\rho.$$

We now take the same sequences ${v}_k$ and ${w}_k$ as in Step 1 and treat them 
as functions of $x', T$ and $S$. 
We start by writing ${v}_{\overline{m}}$ in the form
\begin{align}\label{eq-vn1}\begin{split}
{v}_{\overline{m}}&=c_2(x^\prime) T^2+\cdots+c_{{\overline{m}}-1}(x^\prime)T^{{\overline{m}}-1}
+c_{{\overline{m}},1}(x^\prime)T^{{\overline{m}}-1}S\\
&\qquad+c_{\overline{m}}(y^\prime) T^{\overline{m}}
+\sum_{j=1}^{N_{\overline{m}+1}}c_{\overline{m}+1,j}(x^\prime)T^{\overline{m}+1-j} S^j.\end{split}\end{align}
For $k\geq {\overline{m}}+1$, define $w_k(x^\prime, T, S)$ and $v_k(x^\prime, T, S)$ inductively by 
\begin{align}\begin{split}\label{eq-gk1}
w_{k}&= \frac{1}{{\overline{m}}-\underline{m}}T^{2}\int_0^1  
\rho^{1-\underline{m}} {F}_k(x^\prime, \rho T, \rho(\log \rho)T+\rho S)
d\rho\\
&\qquad - \frac{1}{{\overline{m}}-\underline{m}}T^{2}\int_0^1 
\rho^{1-{\overline{m}}} {F}_k(x^\prime, \rho T, \rho(\log \rho)T+\rho S)d\rho,
\end{split}\end{align}
and 
$${v}_k={v}_{\overline{m}}
+\sum_{i={\overline{m}}+1}^k w_i,$$
where 
\begin{align*}
{F}_{{\overline{m}}+1}(x',T,S)= 
F\big({x^\prime},T,V_{\overline{m}}(x',T,S)\big)
-\frac{1}{T^2}(\Lambda^2\overline{v}_{\overline{m}}
+(p-1)\Lambda\overline{v}_{\overline{m}}+q \overline{v}_{\overline{m}})(x',T,S),
\end{align*}
and, for $k\geq {\overline{m}}+2$,
\begin{align*}
{F}_k=F\big(x^\prime,T, V_{k-1}(x',T,S)\big)
-F\big(x^\prime,T,V_{k-2}(x',T,S)\big).
\end{align*}
Here, 
$$V_k=\bigg(\frac{v_k}{T}, \frac{\Lambda v_k}{T}, \frac{D_{x'} v_k}{T}, \frac{D_{x'}\Lambda v_k}{T}, 
D^2_{x'}v_k,\frac{v_k^2}{T^3},\frac{v_k\Lambda v_k}{T^3}, \frac{(\Lambda v_k)^2}{T^3}\bigg).$$
Then, $w_k$ is a solution of 
\begin{align*}
\Lambda^2w_k
+(p-1)\Lambda w_k+q w_k=T^2{F}_k.
\end{align*}

In the following, we will prove that ${v}_k$ is holomorphic in $x^\prime, T, S$ and 
converges uniformly in a fixed region $|x'|+|T|+|S|<r$. 

We fix an
arbitrary $\theta\in (0,1)$ and let $\{a_k\}$ be introduced as in \eqref{eq-induction-a}. For convenience, 
we write $u(T, S)=u(\cdot, T, S)$ and set 
$$\delta=|T|+\theta|S|.$$
We define 
\begin{align*}
\|u(T, S)\|_r=\sup_{|x'|<r} |u(x', T, S)|,
\end{align*}
and 
\begin{align*}
M_k[u]=\sup_{\substack{0< s <s_0, T\neq0\\
\delta<a_k (s_0-s)}} \bigg[\frac{\|u(T, S)\|_s}{|T|^{\overline{m}-1}}
\bigg(\frac{a_k(s_0-s)}{\delta}-1\bigg)\bigg].\end{align*}
Set  
$${\Omega}_{k}=\{(x^\prime,T, S):\, |x'|+\delta/a_k<s_0 \}.$$ 
We also define 
$$\Omega_\infty=\left\{(x^\prime,T, S):\, |x'|+\delta/a<s_0\right\},$$
and 
\begin{align*}
M_\infty[w]=\sup_{\substack{0<s <s_0, T\neq0\\
\delta<a (s_0-s)}} \bigg[\frac{\|u(T, S)\|_s}{|T|^{\overline{m}-1}}
\bigg(\frac{a(s_0-s)}{\delta}-1\bigg)\bigg].\end{align*}

Inductively, we will prove that there exist positive constants $A$ and $s_0$ such that, 
for  any $k\ge \overline{m}+1$, any $0<s<s_0$ and $\delta<a_k (s_0-s)$, 
\begin{align}\label{eq-upbound1}
&\|V_{k-1}(T,S)\|_s \leq \frac{R}{4},\\
\label{eq-holomorphic1}&\frac{w_k}{T^{\overline{m}}}, \frac{\Lambda w_k}{T^{\overline{m}}}
\text{ are holomorphic in $(x^\prime, T, S)\in\Omega_k$},\\
\label{eq-normbound1}&M_k [\frac{w_k}{T}], M_k [\frac{\Lambda w_k}{T}], M_k [D_{x^\prime} w_{k}]\leq \frac{A}{2^k}.
\end{align}

For $k=\overline{m}+1$, 
we can set $A$ large and $s_0$ small such that \eqref{eq-upbound1}, \eqref{eq-holomorphic1}
and \eqref{eq-normbound1} hold. We can set $s_0^{{\overline{m}}-2} A$ small for later purposes.
For \eqref{eq-upbound1}, we require a stronger estimate 
$$\|V_{\overline{m}}(T,S)\|_s \leq \frac{R}{64}.$$
We assume that \eqref{eq-upbound1}, \eqref{eq-holomorphic1}
and \eqref{eq-normbound1} hold for $k-1$, for some $k\ge \overline{m}+2$, and proceed to consider for $k$.

First, we prove \eqref{eq-upbound}. There are eight components in $V_{k-1}$.  
We consider $D^2_{x^\prime} v_{k-1}$ for an illustration. For each $i=\overline{m}+1, \cdots, k-1$,  $w_{i}$
is holomorphic in $\Omega_{i}$. 
For any $0<s<s_0$ and $\delta<a_{k}(s_0-s)$, we set 
$$\tau_i=\frac{1}{2}\big(s_0+s-\frac{\delta}{a_i}\big).$$
Then, 
\begin{align*}
\|D^2_{x^\prime} w_{i}(T,S)\|_s&\le \frac{\|D_{x^\prime} w_{i}(T,S)\|_{\tau_{i}}}{\tau_{i}-s}
\le  \frac{|T|^{{\overline{m}}-1}M_{i}[D_{x^\prime}w_{i}]}{(\tau_{i}-s)(a_{i}(s_0-\tau_{i})/\delta-1)}\\
&\leq  \frac{4a_i|T|^{{\overline{m}}-1}}{\delta(a_{i}(s_0-s)/\delta-1)^2}M_{i}[D_{x^\prime}w_{i}]\\
&\leq 4a_0 |T|^{{\overline{m}}-2} A\big(\frac{a_{i}}{a_{i+1}}-1\big)^{-2} 2^{-i},\end{align*}
and hence
\begin{align*}
\|D^2_{x^\prime} v_{k-1}(T,S)\|_s &\leq \|D^2_{x^\prime} v_{{\overline{m}}}(T,S)\|_s
+\sum_{i={\overline{m}}+1}^{k-1}  \|D^2_{x^\prime} w_{i}(T,S)\|_s\\
&\leq  \frac{R}{64}+Ca_0 s_0^{{\overline{m}}-2}A\sum_{
i={\overline{m}}+1}^{k} \big(\frac{a_{i}}{a_{i+1}}-1\big)^{-2}2^{-i}
\leq \frac{R}{32},
\end{align*}
since $s_0^{{\overline{m}}-2} A$ is small. We can discuss other components in $V_{k-1}$ similarly. 

Second, we prove \eqref{eq-holomorphic1}. 
Recall that $w_k$ is given by  \eqref{eq-gk1}. Note that 
\begin{equation}\label{eq-Fkmvt1}F_k(x',T,S)=\int_0^1D_VF(x',T,\theta V_{k-1}+(1-\theta)V_{k-2})d\theta
\cdot (V_{k-1}-V_{k-2}),\end{equation}
where, for $s_0<R/4$, 
\begin{align*}
|D_VF|
\leq {M}\bigg[1- \frac1R\big(s_0+ a_0 s_0+\frac{R}{4}\big)\bigg]^{-1}
\leq C,
\end{align*}
and
\begin{align*}
V_{k-1}-V_{k-2}&=\bigg(\frac{w_{k-1}}{T}, \frac{\Lambda w_{k-1}}{T}, \frac{D_{x'} w_{k-1}}{T}, 
\frac{D_{x'} \Lambda w_{k-1}}{T}, D^2_{x'}w_{k-1},\\
&\qquad\frac{v_{k-1}^2}{T^3}-\frac{v_{k-2}^2}{T^3},\frac{v_{k-1}\Lambda v_{k-1}}{T^3}-\frac{v_{k-2}\Lambda v_{k-2}}{T^3}, 
\frac{(\Lambda v_{k-1})^2}{T^3}-\frac{(\Lambda v_{k-2})^2}{T^3}\bigg).
\end{align*}
In view of \eqref{eq-gk1} and the induction that $\frac{w_{k-1}}{T^{\overline{m}}}$ and 
$\frac{\Lambda w_{k-1}}{T^{\overline{m}}}$ are holomorphic in $(x', T, S)\in \Omega_{k-1}$, 
we need to analyze the impact of the factor $\rho^{1-{\overline{m}}}$ in the second term of \eqref{eq-gk1}. 
Divided by $T^{{\overline{m}}}$, the expression
\begin{align*}
T^{2-\underline{m}} \int_0^1\rho^{1-\overline{m}} {F}_k (x^\prime, \rho T, \rho(\log \rho)T+\rho S)d\rho
\end{align*}
is holomorphic in $x^\prime, T$ and $ S$, since 
$V_{k-1}-V_{k-2}$ 
in \eqref{eq-Fkmvt1} and their derivatives can absorb the $\rho^{1-{\overline{m}}}$ factor. For example,
corresponding to the first component in $V_{k-1}-V_{k-2}$, we write 
\begin{align*}
&T^{2-{\overline{m}}} \int_0^1\rho^{1-{\overline{m}}} 
\frac{w_{k-1} }{\rho T}(x^\prime, \rho T, \rho(\log \rho)T+\rho S) d\rho\\
&\qquad=  T\int_0^1\frac{ w_{k-1}}{ (\rho T)^{{\overline{m}}}} (x^\prime, \rho T, \rho(\log \rho)T+\rho S )d\rho,
\end{align*}
which is holomorphic in $x^\prime, T$ and $S$, since the integrand is holomorphic in its arguments. 
Corresponding to the second component in $V_{k-1}-V_{k-2}$, we write 
\begin{align*}
&  T^{2-{\overline{m}}} \int_0^1\rho^{1-{\overline{m}}} 
\frac{\Lambda w_{k-1} }{\rho T}(x^\prime, \rho T, \rho(\log \rho)T+\rho S) d\rho\\
&\qquad= \int_0^1 T D_T \frac{w_{k-1}}{(\rho T)^{{\overline{m}}-1}}(x^\prime, \rho T, \rho(\log \rho)T+\rho S)\\
&\qquad\qquad+ ({\overline{m}}-1)\frac{w_{k-1}}{(\rho T)^{{\overline{m}-1}}}(x^\prime, \rho T, \rho(\log \rho)T+\rho S)\\
&\qquad\qquad+(\log \rho)TD_S \frac{w_{k-1}}{(\rho T)^{{\overline{m}}-1}}(x^\prime, \rho T, \rho(\log \rho)T+\rho S)\\
&\qquad\qquad+SD_S \frac{w_{k-1}}{(\rho T)^{{\overline{m}-1}}}(x^\prime, \rho T, \rho(\log \rho)T+\rho S) d\rho,
\end{align*}
which is holomorphic in $x^\prime, T$ and $S$. We can discuss other terms similarly. 

Last, we prove \eqref{eq-normbound1}. Set
$$\delta(\rho)=\rho|T|+\theta|\rho(\log \rho)T+\rho S|,$$ 
and 
$$h(\rho)=\rho-\theta\rho\log \rho.$$ 
Then, $h$ is an increasing function in $(0,1)$ and hence $h(\rho)\le h(1)=1$ for any $\rho\in (0,1)$. 
It is easy to check, for any $\rho\in (0,1)$, 
$$\delta(\rho)\le h(\rho)\delta\le \delta.$$
For any $0<s<s_0$ and $\delta<a_k(s_0-s)$, we have 
\begin{align*}
\frac{|w_k(T,S)|}{|T|^{\overline{m}}}&\leq 
\frac{1}{\overline{m}-\underline{m}}|T|^{2-\overline{m}}\int_0^1  
\rho^{1-\underline{m}} |{F}_k(x^\prime, \rho T, \rho(\log \rho)T+\rho S)|
d\rho\\
&\qquad + \frac{1}{\overline{m}-\underline{m}}|T|^{2-\overline{m}}\int_0^1 
\rho^{1-\overline{m}} |{F}_k(x^\prime, \rho T, \rho(\log \rho)T+\rho S)|d\rho.\end{align*}
By \eqref{eq-Fkmvt1}, we have 
\begin{align*}
|{F}_k| \leq C|V_{k-1}-V_{k-2}|.
\end{align*}
The integrals split to several parts. First, we have 
\begin{align*}
&\big\|T^{2-{\overline{m}}}\int_0^1 \rho^{1-{\overline{m}}}
\frac{w_{k-1}}{\rho T}
(x^\prime, \rho T, \rho(\log \rho)T+\rho S)d\rho\big\|_s\\
&\qquad\leq  M_{k-1}\big[\frac{w_{k-1}}{T}\big]|T|\int_0^1 
\frac{d\rho}{a_{k-1}(s_0-s)/\delta(\rho)-1}\\
&\qquad\leq  \frac{|T|}{a_{k-1}(s_0-s)/\delta-1}M_{k-1}[\frac{w_{k-1}}{T}]\\
&\qquad\leq \frac{a_k s_0 }{a_k(s_0-s)/\delta-1}M_{k-1}\big[\frac{w_{k-1}}{T}\big].
\end{align*}
We can discuss $ \frac{\Lambda w_{k-1}}{T}$ in a similar way.
Next, we consider $D_{x'}^2w_{k-1}$. 
For each $\rho\in (0,1)$, we take $s<s(\rho)<s_0-\frac{\rho}{a_{k-1}}$ to be fixed. Then, 
\begin{align*}
&\big\|T^{2-\overline{m}}\int_0^1 \rho^{1-\overline{m}}
D^2_{x^\prime}w_{k-1}
 (x^\prime, \rho T, \rho(\log \rho)T+\rho S)d\rho\big\|_{s}\\
 &\quad\leq |T|^{2-\overline{m}}\int_0^1 \rho^{1-\overline{m}}
\frac{\|D_{x^\prime}w_{k-1}(x^\prime, \rho T, \rho(\log \rho)T+\rho S)\|_{s(\rho)}}{s(\rho)-s}d\rho\\
 &\quad\leq C|T| M_{k-1}[D_{x^\prime}w_{k-1}]\cdot I,\end{align*}
where
\begin{align*}I= \int_0^1 
\frac{d\rho}{[s(\rho)-s][a_{k-1}(s_0-s(\rho))/\delta(\rho)-1]}.\end{align*}
Set 
$$s(\rho)=\frac{1}{2}\big(s_0+s-\frac{h(\rho)}{a_{k-1}}\big).$$
Then, 
\begin{align*}I&\le \int_0^1 
\frac{d\rho}{[s(\rho)-s][a_{k-1}(s_0-s(\rho))/\delta h(\rho)-1]}\\
&\le \int_0^1 
\frac{4a_{k-1}d\rho}{\delta h(\rho)[a_{k-1}(s_0-s(\rho))/\delta h(\rho)-1]^2}.\end{align*}
Introduce a new variable 
$$\tau=\frac{\delta h(\rho)}{a_{k-1}(s_0-s)}.$$
Then, with $\delta <a_{k-1}(s_0-s)$, 
\begin{align*}
I&\leq \int^{ \delta/ a_{k-1}(s_0-s)}_0 \frac{4a_{k-1}d\tau}{\delta\tau(\tau^{-1}-1)^2
(1-\theta-\theta\log \rho) } \\
&\leq \frac{4a_{k-1}}{\delta(1-\theta)} \int^{ \delta/ a_{k-1}(s_0-s)}_0 \frac{\tau d\tau}{(1-\tau)^2}  \\
&\leq \frac{4a_{k-1}}{\delta(1-\theta)} \int^{ \delta/ a_{k-1}(s_0-s)}_0 \frac{d\tau}{(1-\tau)^2}  \\
&= \frac{4a_{k-1}}{\delta(1-\theta)} \frac{1}{a_{k-1}(s_0-s)/\delta-1}.
\end{align*}
Hence, with $|T|\leq \delta$, we get 
\begin{align*}
&\big\|T^{2-\overline{m}}\int_0^1 \rho^{1-\overline{m}}
D^2_{x^\prime}w_{k-1}
 (x^\prime, \rho T, \rho(\log \rho)T+\rho S)d\rho\big\|_{s}\\
&\qquad \leq \frac{4a_{k-1}}{\delta(1-\theta)} M_{k-1}[D_{x^\prime}w_{k-1}] \frac{1}{a_{k-1}(s_0-s)/\delta-1}.
\end{align*}
Similar estimates hold for other terms. By taking $Ca_0$ small, we have
\begin{align*}
M_k[\frac{w_k}{T}]\leq \frac{1}{6}\big(M_{k-1}[\frac{w_{k-1}}{T}]+M_{k-1}[\frac{\Lambda w_{k-1}}{T}]+M_{k-1}[
D_{x^\prime}w_{k-1}]\big).
\end{align*}
Next, we consider $ \Lambda w_{k}$. Note
\begin{align*}
D_\rho{F}_k(x^\prime, \rho T, \rho(\log \rho)T+\rho S)
=\rho^{-1}\Lambda{F}_k(x^\prime, \rho T, \rho(\log \rho)T+\rho S),
\end{align*}
and hence
\begin{align*}
&\int_0^1 \rho^{1-\overline{m}} \Lambda{F}_k(x^\prime, \rho T, \rho(\log \rho)T+\rho S)d\rho\\
&\qquad=\int_0^1 \rho^{2-\overline{m}} D_\rho {F}_k(x^\prime, \rho T, \rho(\log \rho)T+\rho S) d\rho.\end{align*}
Integrating by parts, we can estimate all terms similarly.
Hence, we conclude
$$M_k\big[\frac{w_k}{T}\big],\, M_k\big[\frac{\Lambda w_k}{T}\big]\leq \frac{A}{2^k}.$$
Next, note 
\begin{align*}
D_{x^\prime} w_{k}(x^\prime, T, S)&=\int_0^1 D_\rho D_{x^\prime} w_{k}(x^\prime, \rho T, \rho(\log \rho) T+\rho S) d\rho\\
&=\int_0^1 T D_{x^\prime} \frac{\Lambda w_{k}}{\rho T}(x^\prime, \rho T, \rho(\log \rho) T+\rho S)d\rho.\end{align*}
For $\delta<a_{k}(1-s)$, we have similarly
\begin{align*}
M_{k}[D_{x^\prime} w_{k}] \leq \frac{1}{2}M_{k}[\frac{\Lambda w_{k}}{T}]\leq \frac{A}{2^{k}}.
\end{align*}

Therefore, we conclude ${v}_k\to {v}$ in the norm $M_\infty$
and hence $v$ is holomorphic in $(x^\prime, T, S)\in \Omega_\infty$. Moreover, the Taylor series of $v$ 
in terms of $T$ and $S$ converges to $v$ uniformly for $|x'|+|T|+|S|<r$. 
By $u=v$ for $T=t$ and $S=t\log t$ and a comparison of coefficients, we obtain that 
$u$ is analytic in $x^\prime, t, t\log t$ and that the series in \eqref{eq-FormalExpansion} 
converges uniformly to $u$ for $|x'|+t<r/2$.
\end{proof}

Theorem \ref{thrm-Analyticity-VerticalGraph} follows easily from
Theorem \ref{Thm-MainThm}.

\section{The Loewner-Nirenberg Problem}\label{Sec-LN}

In this section, we discuss briefly the Loewner-Nirenberg problem. 

Let $\Omega$ be a bounded domain in $\mathbb R^n$, for some $n\ge 3$. Consider 
\begin{align}
\label{eq-LN-MainEq} \Delta u  &= \frac14n(n-2) u^{\frac{n+2}{n-2}} \quad\text{in }\Omega,\\
\label{eq-LN-MainCond}u&=\infty\quad\text{on }\partial \Omega.
\end{align}
We let $d(x)=\operatorname{dist}(x, \partial \Omega)$
be the distance of $x$ to the boundary $\partial\Omega$. 

Assume that $\Omega$ has a $C^{1,1}$-boundary. Loewner and Nirenberg \cite{Loewner&Nirenberg1974} proved that 
\eqref{eq-LN-MainEq} and \eqref{eq-LN-MainCond} admit a unique positive solution $u\in C^\infty(\Omega)$ and that 
there exists a constant $\mu>0$ such that, for any $x\in \Omega$ with $d(x)<\mu$,
\begin{equation}\label{eq-LN-basic}\big|d^{\frac{n-2}2}(x)u(x)-1\big|\le Cd(x),\end{equation}
where $C$ is a positive constant 
depending only on $n$ and the $C^{1,1}$-norm of $\partial\Omega$. 

Solutions of \eqref{eq-LN-MainEq}-\eqref{eq-LN-MainCond} 
are known to have formal expansions. 
In the case that $\Omega$ is a bounded smooth domain, $d$ is a smooth 
function near $\partial\Omega$. For each $x\in \Omega$ close to $\partial\Omega$, 
there exists a unique $z\in \partial\Omega$ such that $d(x)=|x-z|$. 
Then, a formal expansion of $u$ is given by
$$d^{-\frac{n-2}{2}}\bigg(1+\sum_{i=1}^{n-1}c_id^i
+\sum_{i=n}^\infty \sum_{j=0}^{N_i} c_{i, j}d^i(\log d)^j\bigg),$$
where $c_i$ and $c_{i,j}$ are smooth functions of $z\in \partial\Omega$,
and $N_i$ is a nonnegative constant depending on $i$, with $N_n=1$.
A formal calculation can only determine {\it finitely many terms 
in the formal expansion of} $u$ near $\partial\Omega$. 
In fact, the coefficients $c_1$, $\cdots, c_{n-1}$ and 
$c_{n,1}$
have explicit expressions in terms of principal curvatures of $\partial \Omega$ and 
their derivatives. 
For example, 
$$c_1=\frac{n-2}{4(n-1)}H,$$
and, for $n=3$, 
$$c_{3,1}=-\frac{1}{16}\big\{\Delta_{\partial \Omega} H+2H(H^2-K)\big\},$$
where $H$ and $K$ are the mean curvature and the Gauss curvature of $\partial\Omega$, 
respectively. We note that $c_{3,1}=0$ 
if and only if $\partial\Omega$ is a Willmore surface. 

Mazzeo \cite{Mazzeo1991} and 
Andersson, Chru\'sciel, and Friedrich \cite{ACF1982CMP} proved that solution $u$ of 
\eqref{eq-LN-MainEq}-\eqref{eq-LN-MainCond} is polyhomogeneous if $\Omega$ has a smooth boundary. 

To analyze behaviors of solutions near the boundary, we introduce a new function with the 
zero boundary value. Let $u\in C^\infty(\Omega)$ 
be a solution of \eqref{eq-LN-MainEq}-\eqref{eq-LN-MainCond}. Set 
\be\label{eq-LN-Relation}u=d^{-\frac{n-2}2}(1+v).\ee
If $\partial\Omega\in C^{1,1}$, then $v$ satisfies 
\begin{align}\label{eq-LN-NewMainEq}
\mathcal S(v)=0\quad\text{in }\Omega,
\end{align}
and, by \eqref{eq-LN-basic},  
\begin{align}\label{eq-LN-NewMainCond}|v|\le Cd\quad\text{in }\Omega,\end{align}
where 
\begin{align*}
\mathcal S(v)&=d^2\Delta v-(n-2)d\nabla d\cdot \nabla v-\frac12(n-2)d\Delta d(1+v)\\
&\qquad-\frac14n(n-2)\left[(1+v)^{\frac{n+2}{n-2}}-(1+v)\right].\end{align*}
In particular, $v$ is continuous up to the boundary and $v=0$ on $\partial\Omega$. 
We note that $\mathcal S$ is a semilinear elliptic operator, degenerate along $\partial\Omega$. 
We rewrite $\mathcal S$ as 
\begin{align}\label{eq-LN-NonlinearOperator}\begin{split}
\mathcal S(v)&=d^2\Delta v-(n-2)d\nabla d\cdot \nabla v-nv-\frac12(n-2)d\Delta dv-\frac12(n-2)d\Delta d\\
&\qquad-\frac14n(n-2)\left[(1+v)^{\frac{n+2}{n-2}}-1-\frac{n+2}{n-2}v\right].\end{split}\end{align}
In the expression of $\mathcal S$ in \eqref{eq-LN-NonlinearOperator}, the first four terms 
are linear in $v$, the fifth term is the nonhomogeneous term, and the final term is a nonlinear expression of $v$. 
Methods in \cite{HanJiang2014} and in this paper can be adapted to treat \eqref{eq-LN-NewMainEq}.

Let $k\ge n$ be an integer and set, for $z\in \partial\Omega$ and $d>0$,  
\begin{align}\label{eq-Sum-Intro}
S_k(z,d)=1+\sum_{i=1}^{n-1}c_i(z)d^i
+\sum_{i=n}^k \sum_{j=0}^{N_i} c_{i, j}(z)d^i(\log d)^j.
\end{align}
We point out that the highest order in the parenthesis is given by $d^k$. 
According to the pattern in this expansion, if we intend to continue to 
expand, the next term has an order of  
$d^{k+1}(\log d)^{N_{k+1}}$. 

Similarly as in \cite{HanJiang2014}, we have the
regularity and growth of the remainder $d^{\frac{n-2}2}u-S_k$ as well as 
the regularity of the coefficients $c_i$ and $c_{i,j}$. 

\begin{theorem}\label{thrm-LN-Main-Intro}
For some integer $k\ge n$ and some constant $\alpha\in (0,1)$, 
assume $\partial\Omega\cap B_R(z_0)$ is $C^{k+1, \alpha}$, 
for some $z_0\in \partial\Omega$ and $R>0$, 
and let $u\in C^{\infty}(\Omega\cap B_R(z_0))$ be 
a solution of  \eqref{eq-LN-MainEq}-\eqref{eq-LN-MainCond}. Then, 
there exist functions  $c_i$, $c_{i,j}\in C^{k-i, \epsilon}(\partial\Omega\cap B_R(z_0))$, 
for $i=1, \cdots, k$ and $j=0, 1, \cdots, N_i$,
and any $\epsilon\in (0,\alpha)$, 
such that, 
for $S_k$ defined as in \eqref{eq-Sum-Intro},
for any $m=0, 1, \cdots, k$, any $\epsilon\in (0,\alpha)$, and any $r\in (0, R)$, 
\begin{equation}\label{eq-MainRegularity}\partial_{d}^m \big(d^{\frac{n-2}2}u(x)-S_{k}(z,d)\big)\in 
C^{\epsilon}(\bar\Omega\cap B_r(z_0)),
\end{equation}
and, for any $x\in 
\Omega\cap B_{R/2}(z_0)$,  
\begin{equation}\label{eq-MainEstimate}\big|\partial_{d}^m \big(d^{\frac{n-2}2}u(x)-S_{k}(z,d)\big)\big|
\le C d^{k-m+\alpha},
\end{equation}
where $d=d(x)$, $z\in\partial\Omega$ is the unique point with 
$d(x)=|x-z|$ and $C$ is a positive constant depending only on $n$, $k$, $\alpha$, $R$,
the $L^\infty$-norm of $d^{\frac{n-2}2}u$ in $\Omega\cap B_R(z_0)$ and  
the $C^{k+2, \alpha}$-norm  of $\partial\Omega\cap B_R(z_0)$.  
\end{theorem}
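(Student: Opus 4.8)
The plan is to run, for the semilinear degenerate operator $\mathcal S$ in \eqref{eq-LN-NonlinearOperator}, the same finite-regularity scheme that \cite{HanJiang2014} carried out for the minimal graph equation, combined with the ODE machinery of Section \ref{sec-AnalyticCoefficients}. First I would localize near $z_0$ and pass to Fermi (boundary-normal) coordinates $(z,d)$, in which $\partial\Omega\cap B_R(z_0)$ is $\{d=0\}$ and $d$ is simultaneously the last coordinate and the distance function; there $\Delta=\partial_d^2+\mathcal H(z,d)\,\partial_d+\Delta_{g(d)}$, with $\mathcal H(\cdot,0)$ a constant multiple of the mean curvature and $\Delta_{g(d)}$ the Laplacian of the induced metric on $\{d=\mathrm{const}\}$, and all coefficients $C^{k-1,\alpha}$ up to $\{d=0\}$ since $\partial\Omega\cap B_R(z_0)\in C^{k+1,\alpha}$. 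Writing $u=d^{-\frac{n-2}2}(1+v)$, the source $-\tfrac12(n-2)d\,\Delta d$ of $\mathcal S$ produces, after dividing the equation by $d^2$, a genuine $d^{-1}$ singularity; subtracting the explicit local term $c_1 d$ with $c_1=\tfrac{n-2}{4(n-1)}H$ cancels it exactly, and $\bar v:=v-c_1 d$ then satisfies an equation of the Fuchsian type \eqref{eq-QuasiMain} with $A_{nn}=1$, indicial exponents $\underline m=-1$, $\overline m=n$ (hence $p=2-n$, $q=-n$, and \eqref{eq-Assumption_m1}--\eqref{eq-Assumption_m2} hold for $n\ge3$), bounded inhomogeneity, and the sign condition $2A_{nn}+2P+Q=6-3n<0$, the analogue of \eqref{eq-NegativeCondition}. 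Starting from $|v|\le Cd$ in \eqref{eq-LN-NewMainCond}, the maximum-principle estimate behind Theorem 4.2 of \cite{HanJiang2014} then gives $|\bar v|\le Cd^2$, $|D\bar v|\le Cd$, $|D^2\bar v|\le C$, so that \eqref{eq-QuasiCondition_1}--\eqref{eq-QuasiCondition_2} are satisfied; the dependence of the constants on the $L^\infty$-norm of $d^{\frac{n-2}2}u$ enters at this stage, to keep the superlinear term $(1+v)^{\frac{n+2}{n-2}}$ under control.

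Next I would build the formal expansion \eqref{eq-Sum-Intro} by inserting the ansatz into the $d^{-2}$-normalized equation and matching powers of $d$ and $\log d$: at order $d^i$ the coefficient of $c_i$ in the linearization is the indicial polynomial $(i-\underline m)(i-\overline m)=(i+1)(i-n)$, nonzero for $2\le i\le n-1$, so $c_2,\dots,c_{n-1}$ are determined algebraically from the mean curvature, the induced metric, and their tangential derivatives --- which yields both the explicit formulas (e.g.\ the Willmore expression for $c_{3,1}$ when $n=3$) and the regularity $c_i\in C^{k-i,\epsilon}$ by a derivative count; at order $i=n$ the indicial polynomial vanishes, forcing the $\log d$ term with locally determined $c_{n,1}\in C^{k-n,\epsilon}$, while $c_{n,0}$ is left free by the formal computation; all later $c_{i,j}$, $i>n$, are recursively expressed through $c_2,\dots,c_{n-1},c_{n,0},c_{n,1}$ and the geometry, hence lie in $C^{k-i,\epsilon}$. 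This is the counterpart of \eqref{eq-FormalExpansion} for $\mathcal S$ and makes quantitative the polyhomogeneity of \cite{Mazzeo1991} and \cite{ACF1982CMP}.

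For the remainder, I would set $w=d^{\frac{n-2}2}u-S_k$ and, using the a priori bounds and the explicit construction of $S_k$, write down the Fuchsian equation it satisfies, whose inhomogeneity vanishes to order $d^{k-1}$ up to a logarithmic factor $(\log d)^{N_{k+1}}$. Iterating the solution formula of Lemma \ref{lemma-SolutionODE} for the index-shifted ODEs --- exactly as in the passage from \eqref{eq-definition-u-l} to \eqref{eq-LinearODE-Solution-l} --- together with scaled Schauder estimates on the balls $B_{d(x)/2}(x)$, one upgrades the decay of $w$ and of its normal derivatives in an induction on the order of differentiation, reaching the growth bound \eqref{eq-MainEstimate} for $0\le m\le k$ and the up-to-the-boundary regularity $\partial_d^m w\in C^\epsilon$ of \eqref{eq-MainRegularity}; the superlinear term of $\mathcal S$ contributes only lower-order corrections, handled by a composition estimate of the type used for \eqref{eq-EstimateN1}, and the one extra derivative of $\partial\Omega$ entering the constant is absorbed through the coordinate change and the construction of $S_k$.

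The hardest point, as in \cite{HanJiang2014} and in Section \ref{sec-AnalyticCoefficients}, is the first nonlocal coefficient $c_{n,0}$, together with the propagation of the $C^\epsilon$-regularity of $\partial_d^m w$ across the order $d^n\log d$ where the logarithm first appears. Following the proof of Theorem \ref{lemma-LinearAnalyticity-FirstNonlocal}, $c_{n,0}$ must be read off from an explicit integral representation obtained by iterating the substitution $v_l=\partial_d^l(\bar v/d^2)$ and applying \eqref{eq-LinearODE-Solution-l} with $l=\overline m-2=n-2$; the logarithm is then produced by an integral of the form $\int_0^r s^{-1}\big[F_{n-2}(z,s)-F_{n-2}(z,0)\big]\,ds$, and the delicate issue is to show that this integral is $C^{k-n,\epsilon}$ in $z$ --- which is exactly what forces the loss from $\alpha$ to an arbitrary $\epsilon<\alpha$ and requires carrying out the estimate of that integral in the Hölder, rather than the analytic, setting of Theorem \ref{lemma-LinearAnalyticity-FirstNonlocal}. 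Apart from this, and the routine verification that $(1+v)^{\frac{n+2}{n-2}}$ and the curvature-dependent coefficients meet the required composition estimates, the proof is a direct transcription of the argument in \cite{HanJiang2014}.
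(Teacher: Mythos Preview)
The paper does not actually supply a proof of this theorem: Section~\ref{Sec-LN} only states it, notes that ``Methods in \cite{HanJiang2014} and in this paper can be adapted to treat \eqref{eq-LN-NewMainEq}'', and moves on. Your outline is therefore more detailed than anything the paper itself contains, and it is a faithful transcription of the intended adaptation: the Fermi-coordinate reduction, the substitution $u=d^{-\frac{n-2}2}(1+v)$, the identification of the indicial roots $\underline m=-1$, $\overline m=n$ from $p=2-n$, $q=-n$ (hence $2A_{nn}+2P+Q=6-3n<0$ for $n\ge3$), the construction of the local coefficients $c_1,\dots,c_{n-1},c_{n,1}$ by matching, the integral representation of the nonlocal $c_{n,0}$ via the index-shifted ODEs, and the remainder estimates by barrier plus scaled Schauder --- all of this is exactly the finite-regularity scheme of \cite{HanJiang2014} that the paper invokes.

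Two small remarks. First, the machinery of Section~\ref{sec-AnalyticCoefficients} in \emph{this} paper is tailored to the analytic case; for Theorem~\ref{thrm-LN-Main-Intro} you want the H\"older-category versions of those ODE lemmas, which live in \cite{HanJiang2014} rather than here --- you note this yourself, but the citations should point there throughout. Second, the subtraction of $c_1d$ to reach $|\bar v|\le Cd^2$ is the right move, but one has to do a first pass of the barrier argument (with the genuine $d^{-1}$ forcing present) to upgrade $|v|\le Cd$ to the quadratic decay of $\bar v$; Theorem~4.2 of \cite{HanJiang2014} is formulated for solutions already satisfying \eqref{eq-QuasiCondition_1}--\eqref{eq-QuasiCondition_2}, so strictly speaking you need the preliminary decay step before you can invoke it. Neither point is a gap in the approach, only in the bookkeeping.
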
 
Here we have one loss of regularity, but we remark there is no regularity loss for  a similar result of function $d^\frac{n}{2}u(x)$.
Concerning the analyticity, we have the following result. 

\begin{theorem}\label{thrm-LN-Main-Analyticity}
Assume $\partial\Omega\cap B_R(z_0)$ is analytic,
for some $z_0\in \partial\Omega$ and $R>0$.
Let $u\in C^{\infty}(\Omega\cap B_R(z_0))$ be 
a solution of  \eqref{eq-LN-MainEq}-\eqref{eq-LN-MainCond}. Then, $u$ is analytic in $z$, $d$ and $d\log d$
in $\bar\Omega\cap B_{R/2}(x_0)$. Moreover, let 
$S_k$ be defined as in \eqref{eq-Sum-Intro} satisfying 
\eqref{eq-MainEstimate}. Then, 
$$S_{k}(z,d)\to d^{\frac{n-2}2}u(x)\quad\text{uniformly in }\Omega\cap B_{R/2}(x_0).$$
\end{theorem}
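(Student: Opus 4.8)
\emph{Proof proposal.} The plan is to put the Loewner--Nirenberg equation, localized near $z_0$, into the template form \eqref{gtt} and then to quote Sections 2--4 verbatim. Since $\partial\Omega\cap B_R(z_0)$ is analytic, the Fermi coordinates $(z,t)$ attached to $\partial\Omega$ are analytic, $d=t$ in them, and the level--set metrics (hence $\Delta$ and $\Delta d$) are analytic in $(z,t)$ for small $t$. With $u=d^{-\frac{n-2}{2}}(1+v)$ as in \eqref{eq-LN-Relation}, $v$ solves \eqref{eq-LN-NewMainEq} and $|v|\le Cd$ by \eqref{eq-LN-basic}. The coefficient $c_1(z)=\frac{n-2}{4(n-1)}H(z)$ is an explicit analytic function of the curvatures of $\partial\Omega$, and it is exactly the quantity fixed by the lowest--order compatibility relation, which is what cancels the $t^{-1}$--singular inhomogeneous term produced by $d\,\Delta d$. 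Setting $\tilde v=v-c_1(z)\,t$ we get $\tilde v=O(t^{2})$, $|D\tilde v|\le Ct$, and, dividing the equation for $\tilde v$ by $d^{2}$ and collecting all bounded terms on the right, an equation $\tilde v_{tt}+p\,\tilde v_t/t+q\,\tilde v/t^{2}=F$ with constants $p=2-n$, $q=-n$, whose indicial roots are $\underline m=-1$ and $\overline m=n$ --- integers with $\underline m\le0$ and $\overline m\ge3$ because $n\ge3$, so \eqref{eq-Assumption_m1}--\eqref{eq-Assumption_m2} hold. Here $F$ is analytic in $z$, $t$ and the combinations in \eqref{gtt-Coefficients}: the degenerate nonlinearity $(1+v)^{\frac{n+2}{n-2}}-1-\frac{n+2}{n-2}v=O(v^{2})$ is analytic near $v=0$ and, divided by $t^{2}$, becomes analytic in $t$, $\tilde v/t$ and $\tilde v^{2}/t^{3}$, while the metric and mean--curvature coefficients are analytic in $(z,t)$; in Fermi coordinates $A_{nn}\equiv1$ and $P,Q$ do not depend on $(D\tilde v,\tilde v/t)$, so $2A_{nn}+2P+Q\to 6-3n\le-3$ at $t=0$. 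After passing to a small half--ball around $z_0$ and rescaling to $G_1$, \eqref{eq-NegativeCondition} holds, the bounds \eqref{eq-QuasiLinearAnalyticity-Assum1}--\eqref{eq-QuasiLinearAnalyticity-Assum2} hold (extending the nonlinearity to an entire function outside a neighborhood of $v=0$, which does not affect the solution), and \eqref{eq-QuasiCondition_1}--\eqref{eq-QuasiCondition_2}, together with $\tilde v\in C^1(\bar G_1)\cap C^\infty(G_1)$, follow from Theorem~\ref{thrm-LN-Main-Intro} applied with large $k$.

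Granting this reduction, the rest is a direct application of what is already proved. Theorem~\ref{thrm-TangentialAnalyticity} and Corollary~\ref{cor-Linear-TangentialAnalyticity} give $\tilde v/t^{2},\tilde v_t/t,\tilde v_{tt}\in\mathcal A(\bar G_r)$, i.e. \eqref{eq-TangentialAnalyticity-u} holds for $\tilde v$. Theorem~\ref{lemma-LinearAnalyticity-FirstNonlocal} then shows that all coefficients in the expansion \eqref{eq-FormalExpansion} of $\tilde v$ are analytic in $z$, which, together with the explicit analyticity of $c_1$, yields the analyticity of every $c_i,c_{i,j}$ in \eqref{eq-Sum-Intro}. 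Finally Theorem~\ref{Thm-MainThm} shows $\tilde v$ is analytic in $z$, $t$ and $t\log t$ and that the partial sums of its expansion converge uniformly to $\tilde v$ on a half--ball. Undoing the substitutions, $d^{\frac{n-2}{2}}u-1=c_1(z)d+\tilde v$ is analytic in $z$, $d$ and $d\log d$ on $\bar\Omega\cap B_{R/2}(z_0)$, and $S_k(z,d)\to d^{\frac{n-2}{2}}u(x)$ uniformly on $\Omega\cap B_{R/2}(z_0)$, as claimed.

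The substantive work is the reduction of the first paragraph. One must write out the equation for $\tilde v$ in Fermi coordinates and verify that, once the single term $c_1(z)t$ is removed, the right--hand side is genuinely a real--analytic function of $z$, $t$ and \emph{only} the bounded quantities in \eqref{gtt-Coefficients} --- in particular that the $t^{-1}$ piece coming from the mean curvature cancels exactly and that no residual $\log t$ or unbounded term remains --- and that every structural hypothesis of Sections 2--4 (uniform ellipticity, \eqref{eq-QuasiLinearAnalyticity-Assum1}--\eqref{eq-QuasiLinearAnalyticity-Assum2}, \eqref{eq-NegativeCondition}, \eqref{eq-QuasiCondition_1}--\eqref{eq-QuasiCondition_2}) is preserved under the localization and rescaling to $G_1$. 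This is the Loewner--Nirenberg counterpart of the passage from \eqref{eq-Intro-Equ} to \eqref{gtt} carried out for the minimal graph equation, and once it is in place the three cited theorems apply without change.
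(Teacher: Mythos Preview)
Your proposal is correct and takes exactly the approach the paper indicates: the paper does not give a detailed proof of this theorem but simply states that the methods of Sections~2--4 can be adapted to the semilinear operator $\mathcal S$ in \eqref{eq-LN-NonlinearOperator}, and you have carried out precisely that adaptation, including the necessary subtraction of $c_1(z)t$ to land in the framework \eqref{gtt} with the correct indicial roots $\underline m=-1$, $\overline m=n$. Your verification of \eqref{eq-NegativeCondition} via $2A_{nn}+2P+Q=6-3n\le-3$ and your handling of the non-integer power $(1+v)^{\frac{n+2}{n-2}}$ by cutting off away from $v=0$ are the right points to check, and once the reduction is in place the invocation of Theorems~\ref{thrm-TangentialAnalyticity}, \ref{lemma-LinearAnalyticity-FirstNonlocal}, and \ref{Thm-MainThm} is exactly what the paper intends.
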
 

\appendix

\section{Analyticity Estimates}\label{Subsec-Analyticity}

In this section, we present an analyticity type estimate for compositions of functions, 
which is due to Friedman.
The following result is essentially Lemma 1 in \cite{Friedman1958} with $M_l=l!$.

\begin{lemma}\label{lemma-Composition}
Let $\Omega$ be a domain in $\mathbb R^n$ and $p$ be a positive integer. Assume that $\Phi$ 
is a $C^p$-function in $ \Omega\times\mathbb R^N$ satisfying, for any 
$(x,y)\in \Omega\times\mathbb R^N$ and any nonnegative integers $j$ and $k$ 
with $j+k\le p$, 
\begin{equation}\label{eq-Composition1}
\left|\frac{\partial^{j+k}\Phi(x,y)}{\partial x^j\partial y^k}\right|
\le A_0A_1^jA_2^k(j-2)!(k-2)!,\end{equation}
for some positive constants $A_0$, $A_1$ and $A_2$. Then, there exist 
positive constants $B_0$, $\widetilde B_0$ and $B_1$, 
depending only on $n$, $N$, $A_0$, $A_1$ and $A_1$, such that, 
for any $C^p$-function $y=(y_1, \cdots, y_N): \Omega\to\mathbb R^N$, if 
for any $x\in \Omega$ and any nonnegative integer $k\le p$, 
\begin{equation}\label{eq-Composition2}
\sum_{i=1}^N|\partial^k_xy_i(x)|\le B_0B_1^{(k-2)^+}(k-2)!,\end{equation}
then, for any $x\in\Omega$, 
\begin{equation}\label{eq-Composition3}
|\partial_x^p[\Phi(x,y(x))]|\le \widetilde B_0B_1^{(p-2)^+}(p-2)!.\end{equation}
\end{lemma}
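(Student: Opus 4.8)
The plan is to prove this by the classical \emph{method of majorants}, as in Friedman \cite{Friedman1958}. For formal power series in one variable $z$ with nonnegative coefficients write $f\ll g$ to mean that every coefficient of $f$ is dominated in absolute value by the corresponding coefficient of $g$; I will use the standard stability of $\ll$ under sums, products, and composition (the last when the inner series vanish at $0$). The auxiliary function at the centre of the argument is
\[
\chi(z)=\sum_{l\ge0}\frac{(l-2)!}{l!}\,z^{l}=1+2z+(1-z)\log(1-z),
\]
which is holomorphic in $\{|z|<1\}$ and --- crucially --- extends continuously to $\{|z|\le1\}$ with $\chi(1)=3$, so that $|\chi(z)|\le\chi(|z|)\le3$ there; this is exactly the feature of the shifted factorials $(l-2)!$, as opposed to $l!$, that makes everything go through. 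Fixing $p$ and an arbitrary base point of $\Omega$, which I take to be $0$, I work with degree-$p$ Taylor jets at $0$; since $\Phi(x,y)$ and $\Phi(x,y(0)+y)$ obey the same bound \eqref{eq-Composition1}, I may assume $y(0)=0$.

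First I would read off majorants from the hypotheses. Writing $\bar x=x_{1}+\cdots+x_{n}$, $\bar y=y_{1}+\cdots+y_{N}$ and using the multinomial identity $\sum_{|\alpha|=j}x^{\alpha}/\alpha!=\bar x^{\,j}/j!$, the bound \eqref{eq-Composition1} gives, with no dimensional loss,
\[
\Phi(x,y)\ \ll\ A_{0}\,\chi(A_{1}\bar x)\,\chi(A_{2}\bar y),
\]
and \eqref{eq-Composition2}, together with $y(0)=0$, gives for each $i$
\[
y_{i}(x)\ \ll\ B_{0}\big(\psi(\bar x)-1\big),\qquad
\psi(z)=\sum_{k\ge0}\frac{B_{1}^{(k-2)^{+}}}{[k]_{2}}\,z^{k},
\]
where $[k]_{2}:=k(k-1)$ for $k\ge2$ and $[k]_{2}:=1$ for $k\le1$, so that $[k]_{2}^{-1}=(k-2)!/k!$ and $\psi(0)=1$. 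A direct comparison of coefficients shows $\psi(z)-1\ll B_{1}^{-1}\big(\chi(B_{1}z)-1\big)$ whenever $B_1\ge1$. By the chain rule for jets and the stability of $\ll$ under composition, the degree-$p$ jet at $0$ of $x\mapsto\Phi(x,y(x))$ is therefore majorized by that of the one-variable function
\[
H(\bar x)=A_{0}\,\chi(A_{1}\bar x)\,\chi\!\Big(\tfrac{NA_{2}B_{0}}{B_{1}}\big(\chi(B_{1}\bar x)-1\big)\Big).
\]

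The heart of the matter is a closure property of $\chi$ under products and self-composition --- Friedman's combinatorial lemma in disguise: there is an absolute constant $C_{0}$ with
\[
\big[z^{p}\big]\big(\chi(z)-1\big)^{r}=\sum_{\substack{l_{1}+\cdots+l_{r}=p\\ l_{s}\ge1}}\frac{1}{[l_{1}]_{2}\cdots[l_{r}]_{2}}\ \le\ \frac{C_{0}^{\,r}}{[p]_{2}}\qquad(p\ge2,\ r\ge0).
\]
I would prove this by induction on $r$, the step peeling off one factor and reducing to the elementary bound $\sum_{l=1}^{p-1}\big([l]_{2}[p-l]_{2}\big)^{-1}\le C_{1}[p]_{2}^{-1}$, which follows by treating the two endpoint terms (each comparable to $[p]_{2}^{-1}$) separately from the middle terms (bounded by $16/p^{2}$ via $(p-l)(p-l-1)\ge p^{2}/8$ for $l\le p/2$ and $\sum_{l\ge2}[l]_{2}^{-1}=1$). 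Granting this and choosing $B_{1}\ge C_{0}\max(1,A_{1},NA_{2}B_{0})$ (so that $\tfrac{NA_{2}B_{0}}{B_{1}}C_{0}\le1$), one gets $\chi\!\big(\tfrac{NA_{2}B_{0}}{B_{1}}(\chi(B_{1}\bar x)-1)\big)\ll\chi(1)\,\chi(B_{1}\bar x)=3\,\chi(B_{1}\bar x)$ and $\chi(A_{1}\bar x)\ll\chi(B_{1}\bar x)$, hence $H\ll3A_{0}\,\chi(B_{1}\bar x)^{2}\ll CA_{0}\,\chi(B_{1}\bar x)$ by the $r\le2$ cases of the inequality. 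Since $\partial_{x}^{\alpha}\big[\chi(B_{1}\bar x)\big](0)=B_{1}^{p}\,p!/[p]_{2}=B_{1}^{p}(p-2)!$ for every $|\alpha|=p$, reading off coefficients yields $|\partial_{x}^{p}[\Phi(x,y(x))](0)|\le CA_{0}B_{1}^{p}(p-2)!=\big(CA_{0}B_{1}^{2}\big)B_{1}^{(p-2)^{+}}(p-2)!$ for $p\ge2$, while the cases $p=0,1$ are immediate from \eqref{eq-Composition1}. Thus $\widetilde B_{0}:=CA_{0}B_{1}^{2}$, together with the above choices of $B_{0}$ and $B_{1}$ (all depending only on $n$, $N$, $A_{0}$, $A_{1}$, $A_{2}$), does the job, and since $0\in\Omega$ was arbitrary the estimate holds throughout $\Omega$.

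The step I expect to be the real obstacle is the combinatorial lemma --- i.e.\ reproving Friedman's Lemma 1 --- together with the bookkeeping that funnels the scattered powers of $A_{1},A_{2},B_{1}$ into the single factor $B_{1}^{(p-2)^{+}}$; everything else (the majorant dictionary, the coefficient comparison for $\psi-1$, extracting the top-order coefficient) is routine once that is in hand. An alternative, equivalent route, which I would adopt if the bookkeeping turned out cleaner that way, is to expand $\partial_{x}^{p}[\Phi(x,y(x))]$ directly by the Fa\`{a} di Bruno formula, combine each multinomial coefficient with the factors $(\mu-2)!$, $(l_{s}-2)!$ into $p!/\big([\mu]_{2}[l_{1}]_{2}\cdots[l_{r}]_{2}\big)$, and sum using the same combinatorial inequality.
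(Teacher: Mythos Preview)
Your proposal is correct and follows essentially the same route as the paper: both are Friedman's method of majorants, with the majorant of $\Phi$ taken as a product $A_0\chi(A_1\bar x)\chi(A_2\bar y)$ (the paper's $\Psi_1(t)\Psi_2(z)$), the majorant of $y(x)$ taken as $B_0(\psi(\bar x)-1)$ (the paper's $z(t)$), and the whole argument resting on the same combinatorial inequality $\sum_{l=1}^{p-1}\big([l]_2[p-l]_2\big)^{-1}\le C_1[p]_2^{-1}$ (which the paper packages as the bound $a_{i,k}\le 3^{i-1}/\big((k-i+1)(k-i)\big)$ on the coefficients of $[z(t)]^i$). Your presentation via the generating function $\chi$ and the relation $\ll$ is more compact, while the paper carries out the coefficient comparisons explicitly, but the substance is identical.
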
 

\begin{proof} 
Set $t=x_1+\cdots+x_n$. Fix an $x\in\Omega$ and write $y=y(x)$. 
We will construct scalar-valued $C^p$-functions
$z(t)$ and $\Psi(t,z)$ such that, 
for $k=1, \cdots, p$, 
\begin{equation}\label{eq-Composition4}
|\partial_{(x,y)}^k\Phi(x, y)|\le \partial^k_{(t,z)}\Psi(0,0),\end{equation}
and 
\begin{equation}\label{eq-Composition5} 
\sum_{i=1}^N|\partial_x^ky_i(x)|\le \frac{d^k}{dt^k}z(0).\end{equation} 
Also, we require $z(0)=0$. 

First, we note 
$$\partial_x^p[\Phi(x,y(x))]=\sum
\frac{\partial^{|\alpha_0|+k}\Phi}{\partial x^{\alpha_0}\partial y_{i_1}\cdots\partial y_{i_k}}(x,y)
\frac{\partial^{|\alpha_1|}y_{i_1}}{\partial x^{\alpha_1}}(x)\cdots
\frac{\partial^{|\alpha_k|}y_{i_k}}{\partial x^{\alpha_k}}(x),$$
where the summation is for $\alpha_0, \alpha_1, \cdots, \alpha_k\in \mathbb Z^n_+$ 
with $|\alpha_0|+|\alpha_1|+\cdots+|\alpha_k|=p$ and also for $i_1, \cdots, i_k$ from 1 to $N$. 
By \eqref{eq-Composition4} and \eqref{eq-Composition5}, we have 
$$|\partial_x^p[\Phi(x,y(x))]|\le\sum
\frac{\partial^{|\alpha_0|+k}\Psi}{\partial t^{|\alpha_0|}\partial z^k}(0,0)
\frac{d^{|\alpha_1|}z}{d t^{|\alpha_1|}}(0)\cdots
\frac{d^{|\alpha_k|}z}{d t^{|\alpha_k|}}(0).$$
Hence, 
\begin{equation}\label{eq-Composition6}
|\partial_x^p[\Phi(x,y(x))]|\le \frac{d^p}{dt^p}[\Psi(t,z(t))]\big|_{t=0}.\end{equation}

In view of \eqref{eq-Composition1}, we set 
\begin{equation}\label{eq-Composition7}
\Psi(t,z)=\Psi_1(t)\Psi_2(z),\end{equation}
where 
\begin{equation}\label{eq-Composition8}
\Psi_1(t)=\sum_{i=0}^p\frac{A_1^i(i-2)!}{i!}t^i,\end{equation}
and 
\begin{equation}\label{eq-Composition9}
\Psi_2(z)=A_0\sum_{i=0}^p\frac{A_2^i(i-2)!}{i!}z^i.\end{equation}
Then, \eqref{eq-Composition4} holds. 

Next, we set 
\begin{equation}\label{eq-Definition_z}
z(t)=B_0\left[t+\sum_{k=2}^p\frac{1}{k(k-1)}B_1^{k-2}t^k\right].\end{equation} 
By \eqref{eq-Composition2}, \eqref{eq-Composition5} holds. 

Now we start to estimate the right-hand side of \eqref{eq-Composition6}. 
We claim, for any $i=1, \cdots, p$, 
\begin{equation}\label{eq-Claim_z_i}
[z(t)]^i=B_0^i\left[t^i+\sum_{k=i+1}^pa_{i,k}B_1^{k-i-1}t^k\right]+O(t^{p+1}),\end{equation} 
where $a_{i,k}$ is a nonnegative constant satisfying, for $1\le i<k\le p$,  
\begin{equation}\label{eq-Claim_z_i_coefficients}
a_{i,k}\le \frac{3^{i-1}}{(k-i+1)(k-i)}.\end{equation}
To prove \eqref{eq-Claim_z_i} and \eqref{eq-Claim_z_i_coefficients}, 
we first note that \eqref{eq-Definition_z} implies 
\eqref{eq-Claim_z_i} with $i=1$ and the equality holds in \eqref{eq-Claim_z_i_coefficients}
with $i=1$. We assume that 
\eqref{eq-Claim_z_i} and \eqref{eq-Claim_z_i_coefficients} hold for some $i=1, \cdots, p-1$. 
Next, we consider $i+1$. A simple multiplication of  \eqref{eq-Definition_z} and 
\eqref{eq-Claim_z_i}  yields 
\begin{align*}
[z(t)]^{i+1}&=B_0^{i+1}\bigg[t^{i+1}+\sum_{k=2}^pB_1^{k-2}a_{1,k}t^{k+i}
+\sum_{l=i+1}^pB_1^{l-i-1}a_{i,l}t^{l+1}\\
&\qquad +\sum_{k=2, l=i+1}^pB_1^{k+l-i-3}a_{1,k}a_{i,l}t^{k+l}\bigg]+O(t^{p+1}).
\end{align*}
By a change of indices in summations, we have 
$$[z(t)]^{i+1}=B_0^{i+1}\bigg[t^{i+1}+\sum_{m=i+2}^pB_1^{m-i-2}a_{i+1,m}t^{m}\bigg]+O(t^{p+1}),$$
where, for $m=i+2, \cdots, p$,  
$$a_{i+1,m}=a_{1, m-i}+a_{i,m-1}+\operatorname{sgn}(m-i-2)B_1^{-1}
\sum_{\substack{k+l=m\\ k\ge2, l\ge i+1}}a_{1,k}a_{i,l}.$$
By \eqref{eq-Claim_z_i_coefficients}, we have 
$$a_{i+1,m}\le \frac{1+3^{i-1}}{(m-i)(m-i-1)}+\frac{3^{i-1}}{B_1}
\sum_{\substack{k+l=m\\ k\ge2, l\ge i+1}}\frac{1}{k(k-1)(l-i+1)(l-i)}.$$
Note $k\le 2(k-1)$ for $k\ge 2$ and $l-i+1\le 2(l-i)$ for $l\ge i+1$. Then, 
\begin{align*}\frac{1}{k(k-1)(l-i+1)(l-i)}&\le \frac{4}{k^2(l-i+1)^2}
=\frac{4}{(k+l-i+1)^2}\left[\frac1k+\frac{1}{l-i+1}\right]^2\\
&\le \frac{8}{(k+l-i+1)^2}\left[\frac1{k^2}+\frac1{(l-i+1)^2}\right].
\end{align*}
Hence, 
\begin{align*}
\sum_{\substack{k+l=m\\ k\ge2, l\ge i+1}}\frac{1}{k(k-1)(l-i+1)(l-i)}&
\le \frac{8}{(m-i+1)^2}\left[\sum_{k\ge 2}\frac{1}{k^2}+\sum_{l\ge i+1}\frac{1}{(l-i+1)^2}\right]\\
&\le \frac{16}{(m-i+1)^2}\left(\frac{\pi^2}{6}-1\right)\le  \frac{16}{(m-i+1)^2}. \end{align*}
A simple substitution yields 
$$a_{i+1,m}\le \frac{1+3^{i-1}}{(m-i)(m-i-1)}+\frac{3^{i-1}\cdot 16B_1^{-1}}{(m-i)(m-i-1)}.$$
If $B_1\ge 16$, then 
$$a_{i+1,m}\le \frac{3\cdot 3^{i-1}}{(m-i)(m-i-1)}=\frac{3^{i}}{(m-i)(m-i-1)}.$$
This proves \eqref{eq-Claim_z_i} and \eqref{eq-Claim_z_i_coefficients} for $i+1$. 

By \eqref{eq-Composition9}, we write 
$$\Psi_2(z)=A_0\left[1+A_2z+\sum_{i=2}^p\frac{A_2^i}{i(i-1)}z^i\right],$$
and hence 
$$\Psi_2(z(t))=A_0\left[1+A_2z(t)+\sum_{i=2}^p\frac{A_2^i}{i(i-1)}[z(t)]^i\right].$$
We claim, for $k=1, \cdots, p$, 
\begin{equation}\label{eq-DerivativePsi2}\frac{d^k}{dt^k}\Psi_2(z(t))|_{t=0}
\le \widehat B_0B_1^{(k-2)^+}(k-2)!,
\end{equation}
where 
$$\widehat B_0=A_0B_0(9A_2+A_2^2B_0).$$
First, we have 
$$\frac{d}{dt}\Psi_2(z(t))|_{t=0}=A_0A_2\frac{dz}{dt}(0)=A_0A_2B_0.$$
Then, \eqref{eq-DerivativePsi2} holds for $k=1$ since $\widehat B_0\ge A_0A_2B_0$. 
Next, for $k=2, \cdots, p$, 
$$\aligned \frac{d^k}{dt^k}\Psi_2(z(t))|_{t=0}
&=A_0A_2\frac{d^kz}{dt^k}(0)+A_0\sum_{i=2}^p\frac{A_2^i}{i(i-1)}\frac{d^k}{dt^k}[z(t)]^i|_{t=0}\\
&=A_0A_2B_0B_1^{k-2}(k-2)!+A_0\sum_{i=2}^{k-1}\frac{A_2^i}{i(i-1)}a_{i,k}B_0^iB_1^{k-i-1}k!\\
&\qquad+A_0A_2^kB_0^k(k-2)!.
\endaligned$$
Hence, by \eqref{eq-Claim_z_i_coefficients}, 
$$\aligned \frac{d^k}{dt^k}\Psi_2(z(t))|_{t=0}
&\le A_0B_0B_1^{k-2}(k-2)!\\
&\qquad\cdot\bigg[A_2+\sum_{i=2}^{k-1}\frac{3^{i-1}A_2^iB_0^{i-1}}{B_1^{i-1}}
\frac{k(k-1)}{i(i-1)(k-i+1)(k-i)}+\frac{A_2^kB_0^{k-1}}{B_1^{k-2}}\bigg].
\endaligned$$
We note, for $2\le i\le k-1$,  
$$\frac{k(k-1)}{i(i-1)(k-i+1)(k-i)}\le 8.$$
To prove this, we consider $2\le i\le k/2$ first and have 
$$\frac{k(k-1)}{(k-i+1)(k-i)}\le \frac{k(k-1)}{(k/2+1)k/2}\le\frac{4(k-1)}{k+2}\le 4.$$
For $k/2<i\le k-1$, we have $k\ge 3$ and 
$$\frac{k(k-1)}{i(i-1)}<\frac{k(k-1)}{k/2(k/2-1)}\le \frac{4(k-1)}{k-2}\le 8.$$
Therefore, 
$$\aligned\frac{d^k}{dt^k}\Psi_2(z(t))|_{t=0}
&\le A_0B_0B_1^{k-2}(k-2)!\\
&\qquad\cdot\left[A_2
+8A_2\sum_{i=2}^{k-1}\left(\frac{3A_2B_0}{B_1}\right)^{i-1}
+A_2^2B_0\left(\frac{A_2B_0}{B_1}\right)^{k-1}\right].\endaligned
$$
If $B_1\ge 6A_2B_0$, we obtain \eqref{eq-DerivativePsi2} for $k=2, \cdots, p$.

Next, by \eqref{eq-Composition8}, we have, for any $i=0, 1, \cdots, p$,  
\begin{equation}\label{eq-DerivativePsi1}
\frac{d^i\Psi_1}{dt^i}(0)=A_1^i(i-2)!.\end{equation}
With \eqref{eq-Composition7}, \eqref{eq-DerivativePsi2} and \eqref{eq-DerivativePsi1}, we have 
$$\aligned \frac{d^p}{dt^p}\Psi(t, z(t))|_{t=0}&=\sum_{i=0}^p
\frac{p!}{i!(p-i)!}\frac{d^i}{dt^i}\Psi_2(z(t))|_{t=0}\cdot \frac{d^{p-i}}{dt^{d-i}}\Psi_1|_{t=0}\\
&\le \sum_{i=0}^p\frac{p!}{i!(p-i)!}(p-i-2)!(i-2)! \widehat B_0 A_1^{p-i}B_1^{(i-2)^+}\\
&=\widehat B_0B_1^{p-2}(p-2)! \sum_{i=0}^p\frac{p(p-1)}{i!(p-i)!}(p-i-2)!(i-2)! \frac{A_1^{p-i}B_1^{(i-2)^+}}{B_1^{p-2}}.
\endaligned$$
We consider $p\ge 3$. By considering $i=0$ and $p$, $i=1$ and $p-1$, and $2\le i\le p-2$, we have 
$$\frac{p(p-1)}{i!(p-i)!}(p-i-2)!(i-2)! \le 8.$$
Therefore, 
$$\frac{d^p}{dt^p}\Psi(t,z(t))|_{t=0}\le\sum_{i=0}^p
\widehat B_0B_1^{p-2}(p-2)! 
\left[(A_1^2+3A_1)\left(\frac{A_1}{B_1}\right)^{p-2}+8\sum_{i=2}^p\left(\frac{A_1}{B_1}\right)^{p-i}\right].$$
By taking $B_1\ge 2A_1$, we have 
$$\frac{d^p}{dt^p}\Psi(t,z(t))|_{t=0}\le(A_1^2+3A_1+16)\widehat B_0B_1^{p-2}(p-2)!.$$
In summary, we take $B_1\ge \max\{16, 6A_2B_0, 2A_1\}$ and 
$$\widetilde B_0=(A_1^2+3A_1+16)\widehat B_0=A_0B_0(9A_2+A_2^2B_0)(A_1^2+3A_1+16),$$
and then have the desired result. 
\end{proof} 

\begin{remark}\label{rmk-Composition} Write $x=(x',x_n)$. In Lemma \ref{lemma-Composition}, 
if we assume \eqref{eq-Composition1} and 
\eqref{eq-Composition2} hold only for $D_{x'}$ instead of $D_x$, then 
\eqref{eq-Composition3} holds for $D_{x'}$.\end{remark}

\end{document}